\newtheorem{thm}{Theorem}
\newtheorem{prop}{Proposition}
\newtheorem{lemma}{Lemma}
\newtheorem{remark}{Remark}
\newtheorem{assump}{}
\newtheorem{assumpB}{}
\newtheorem{assumpC}{}
\newtheorem{definition}{Definition}
\newtheorem{corollary}{Corollary}
\newcommand{\mathscr}{\mathcal}
\newcommand{\ito}{It\^o}
\newcommand{\frechet}{Fr\'echet }
\newcommand{\rref}[1]{(\ref{#1})}
\newcommand{\Ind}{\mathds{1}}
\newcommand{\bi}{\begin{itemize}}
\newcommand{\ei}{\end{itemize}}
\newcommand{\be}{\begin{equation}} 
\newcommand{\ee}{\end{equation}}
\newcommand{\cMFG}{\text{c-MFG}}
\newcommand{\oMFG}{\text{nc-MFG}}
\newcommand{\flows}{\text{flow maps}}
\newcommand{\Flow}{\text{Flow map}}
\renewcommand{\l}{\left}
\renewcommand{\r}{\right}
\newcommand{\la}{\l\langle}
\newcommand{\ra}{\r\rangle}
\newcommand{\R}{\mathbb{R}}
\renewcommand{\P}{\mathscr{P}}
\newcommand{\Mt}[1]{\mathscr{L}^2_{#1}(\Ptwo(\R^n))}
\newcommand{\MM}[2]{\mathscr{M}([#1,#2],\R^n)}
\renewcommand{\H}{\mathscr{H}}
\newcommand{\W}{\mathscr{W}}
\newcommand{\Ptwo}{\mathscr{P}_{2}}
\newcommand{\Wtwo}{\mathscr{W}_{2}}
\newcommand{\Htwo}[1]{\H^{2}([0,T];\mathbb{R}^{#1})}
\newcommand{\Ltwo}{\mathscr{L}^{2}}
\newcommand{\tF}{\tilde{\F}}
\newcommand{\tP}{\tilde{\mathbb{P}}}
\newcommand{\F}{\mathscr{F}}
\newcommand{\G}{\mathscr{G}}
\newcommand{\CP}[1]{\mc{M}([0,T];\Ptwo(\R^{#1}))}
\newcommand{\MP}[2]{\mc{M}([#1,#2];\Ptwo(\R^n))}
\renewcommand{\b}{\bar}
\renewcommand{\t}{\tilde}
\newcommand{\h}{\hat}
\newcommand{\tr}{\text{tr}}
\newcommand{\tsg}{\tilde{\sigma}}
\newcommand{\tSigma}{\tilde{\Sigma}}
\newcommand{\sg}{\sigma}
\newcommand{\tw}{\t{\omega}}
\newcommand{\bal}{\bar{\alpha}}
\newcommand{\ha}{\hat{\alpha}}
\newcommand{\mc}{\mathcal}
\newcommand{\mb}{\mathbb}
\newcommand{\mbb}{\mathbb}
\newcommand{\eps}{\varepsilon}
\newcommand{\tdW}{d\tilde{W}}
\newcommand{\tW}{\tilde{W}}
\newcommand{\tZ}{\tilde{Z}}
\newcommand{\tz}{\tilde{z}}
\newcommand{\ba}{\bar{\alpha}}
\newcommand{\bq}{\bar{q}}
\newcommand{\bm}{\bar{m}}
\newcommand{\bH}{\bar{H}}
\newcommand{\bb}{\bar{b}}
\newcommand{\bsg}{\bar{\sigma}}
\newcommand{\btsg}{\bar{\t{\sigma}}}
\newcommand{\D}{\Delta}
\renewcommand{\d}[1]{\nabla_{#1}}
\begin{document}
%\begin{frontmatter}
\title[FBSDE with with monotone functionals and MFGs with common noise]
{Forward-backward stochastic differential equations with monotone functionals and mean field games with common noise}

\author{Saran Ahuja}
\address{Corresponding author, Building 380, Stanford University, Stanford, CA 94305 ({\tt ssunny@stanford.edu})}

\author{Weiluo Ren}
\address{Building 380, Stanford University, Stanford, CA 94305 ({\tt weiluo@stanford.edu})}

\author{Tzu-Wei Yang}
\address{Vincent Hall, 206 Church St. SE, University of Minnesota, Minneapolis, MN 55455 
({\tt yangx953@umn.edu})}

\date{}
\keywords{forward-backward stochastic differential equations, monotone functional, mean field FBSDE with conditional law, mean field games with common noise}

\begin{abstract}
We consider a system of forward-backward stochastic differential equations (FBSDEs) with monotone functionals. We show that such a system is well-posed by the method of continuation similarly to Peng and Wu (1999) for classical FBSDEs. As applications, we prove the well-posedness result for a mean field FBSDE with conditional law and show the existence of a decoupling function. Lastly, we show that mean field games with common noise are uniquely solvable under a linear-convex setting and weak-monotone cost functions and prove that the optimal control is in a feedback form depending only on the current state and conditional law.
\end{abstract}

\maketitle

%\end{frontmatter}
% !TEX root = ../main.tex

\section{Introduction}
\label{introduction}

%\red{- Lately, people are interested in mean-field fbdse related to mean-field game and mean-field type control \\
%- Technique used in this FBSDE is similar to those from Peng-Wu. \\
%- This technique work before the proof used to deal with random coefficient is usually based on some probabilistic argument \\
%- We are motivated by mfg in common noise which lead to McKean Vlasov FBSDE with common noise where author employ similar technique to get weak monotonicity condition. This work attempt to generalize this result and add more result (This was done in the simple case) \\
%- The assumption on functional are similar to Peng-Wu, which when translate to McKean-Valsov similar to  \\
%- monotonicity on mean-field leads to weak-monotone assumption in MFG common noise \\
%- Emphasize/summarize our contribution \\
%-  Discuss other kinds of functional (path-dependent stuff, more research) \\
% - Disucss main result and applications and compare to other work (particularly Bensoussan) \\
%  .}
%  
  
In recent years, there has been a wide interest in the study of fully-coupled mean-field forward-backward stochastic differential equations (FBSDEs) 
of the following form
\begin{equation}
	\label{fbsde_G_intro}
	\begin{aligned}
		&dX_t = b(t,X_t,Y_t,Z_t,\mb{P}_{(X_t,Y_t,Z_t)})dt + \sigma(t,X_t,Y_t,Z_t,\mb{P}_{(X_t,Y_t,Z_t)}) dW_t \\
		&dY_t =  f(t,X_t,Y_t,Z_t,\mb{P}_{(X_t,Y_t,Z_t)})dt+Z_tdW_t \\
		&X_0 = \xi, \quad Y_T = g(X_T,\mb{P}_{X_T}),
	\end{aligned}
\end{equation}
where the coefficients $b$, $\sigma$, $f$ and $g$ depend on $\mb{P}_{(X_t,Y_t,Z_t)}$, the law of the solution $(X_t,Y_t,Z_t)$. This type of FBSDEs arises naturally 
from the mean-field type problems such as mean-field games (MFG) and mean-field type control problems (MFTC) \cite{carmona2013forward,ahuja2014}.
% (MFTC) through the use of the stochastic maximum principle [].  The MFG and MFTC  

The well-posedness of the mean-field FBSDE (\ref{fbsde_G_intro}) is studied in \cite{carmona2013forward, carmona2013probabilistic, carmona2013fbsde, Bensoussan2015}. 
In \cite{carmona2013fbsde}, Carmona and Delarue show the existence of (\ref{fbsde_G_intro}) under a bound condition on the law argument. 
In \cite{Bensoussan2015}, the existence and uniqueness results are shown under a monotonicity condition. This monotonicity condition is motivated by the well-posedness result in 
the classical fully-coupled FBSDEs developed by \cite{peng1999,hu1990}. All these results are based on the method of continuation and the Banach fixed point theorem, 
and more importantly, they are probabilistic approaches relying on the estimates on the space of random variables.

%Thus, the proofs can be extended naturally to 
%mean-field FBSDEs if we have the estimate of the following form
%\begin{equation*}
%	\mb{E}\l[ \phi(X,Y,Z,\mb{P}_{(X,Y,Z})-\phi(X',Y',Z',\mb{P}_{(X',Y',Z'}) \r] \leq , \quad 
%\end{equation*}
%and similarly for other condition. 
%%\todo{Too many notations to explain here.}
%%\begin{equation}
%%	\begin{aligned}
%%		&\mb{E}\l[ \sup_{s\leq t \leq T} \mathbf{1}_A \|\Delta X_t\|^2 + \sup_{s\leq t \leq T} \mathbf{1}_A\|\Delta Y_t\|^2 + 
%%		\int^T_s(\mathbf{1}_A\|\Delta Z_t\|^2)dt \r]\\ 
%%		&\quad \leq C_{K, T}\mb{E} \l[ \mathbf{1}_A \l( \|\Delta \xi\|^2 +\|\bar{g}\|^2 + \int^T_s(\|\bar{f}_t\|^2 + \|\bar{b}_t\|^2 + \|\bar{\sigma}_t\|^2 \r) \r] 
%%	\end{aligned}
%%\end{equation}
%More generally, if we define a functional $V: \  $ 
%\begin{equation*}
%	V(t
%\end{equation*}
%Then condition .. is equivalently to merely a Lipschitz condition on this functional. This \textit{lifting} of a function on a law to a functional on the space of random variables was used in \cite{cardaliaguet2010,gomes2013}. In \cite{gomes2013}, Lasry and Lion uses ... allows to work on the Hilbert space instead of the Wasserstein space.

In this paper,  we are interested in extending mean-field FBSDE \eqref{fbsde_G_intro} to a more general setting where the monotonicity property can still be applied to establish well-posed result and explore its application to the MFG model. That is, we consider the fully-coupled \textit{functional} FBSDE of the following form
\begin{equation}
	\label{eq:functional FBSDE}
	\begin{aligned}
		&dX_t = B(t, X_t, Y_t, Z_t)dt + \Sigma(t, X_t, Y_t, Z_t) dW_t \\
		&dY_t = F(t, X_t, Y_t, Z_t)dt + Z_t dW_t\\
		&X_0 = \xi, \quad Y_T = G(X_T).
	\end{aligned}
\end{equation}
Here instead of the functions of the values of $(X_t, Y_t, Z_t)$, we assume that $B$, $\Sigma$, $F$ and $G$ are \textit{functionals} of 
the square integrable random variables $X_t$, $Y_t$ and $Z_t$. This functional FBSDE includes \rref{fbsde_G_intro} as one can define a \textit{lifting} functional 
\begin{align*}
B: &[0,T]\times \Ltwo(\R^n) \times \Ltwo(\R^n) \times \Ltwo(\R^{n\times d}) &\to &\quad\Ltwo(\R^n) \\
& (t,X,Y,Z)  &\to &\quad b(t,X,Y,Z,\mb{P}_{(X,Y,Z)}) 
\end{align*}
and define similarly $\Sigma,F,G$ for $\sigma,f,g$. More importantly, as we shall discuss below, this set-up includes a mean-field FBSDE with \textit{conditional} law arising from a mean-field game with \text{common noise}, a type of model which has gained significant interest in a recent years due to its application in economic and financial modeling \cite{carmona2014commonnoise, Lacker2015, ahuja2015mean,carmona2013mean}. This lifting of a function on a law to a functional on the space of 
random variables was also discussed in \cite{cardaliaguet2010, gomes2016} where in \cite{cardaliaguet2010}, Lasry and Lion apply the lifting to define a notion of derivative in the Wasserstein space.  

%With the similar conditions (in the functional sense) in \cite{peng1999}, the existence and uniqueness of the functional FBSDE (\ref{eq:functional FBSDE}) are 
%assured in Theorem \ref{wellposed} whose proof is an extension from \cite{peng1999}. When In this paper, we focus on the applications
%of the functional FBSDE (\ref{eq:functional FBSDE}) to mean-field FBSDE with conditional probabilities and mean-field games with common noise.

%old version
%With the similar conditions (in the functional sense) in \cite{peng1999}, the existence and uniqueness of the functional FBSDE (\ref{eq:functional FBSDE}) are 
%assured in Theorem \ref{wellposed} and in fact the proof is naturally extended from the proof in \cite{peng1999}.  In this paper, we focus on the applications
%of the functional FBSDE (\ref{eq:functional FBSDE}) to mean-field FBSDE with conditional probabilities and mean-field games with common noise.

%\todo{add significance of the results??}
This paper contributes mainly to the well-posedness theory of a general class of functional FBSDE and its applications to mean-field problems. Through a functional 
set-up, we provide several new results relating to mean-field FBSDE with conditional law and MFG with common noise. First, we show the existence and uniqueness 
result of both systems under a monotone type condition. For MFG with common noise, this result leads to what we call a \textit{weak monotonicity} condition on the cost 
functions. The weak monotonicity condition was first discussed in \cite{ahuja2014} under a simpler set-up. Here, we generalize the result further and provide a simpler proof through this functional FBSDE.  

In addition to the well-posed result, by using the conditional estimate of the solution to the functional FBSDE, we are able to prove the existence of the decoupling function of mean-field 
FBSDE with conditional law. As a corrollary, we have that the solution to MFG with common noise is in a feedback form thereby establishing its Markov 
property. The Markov property of MFG was discussed heuristically in \cite{carmona2014master} and proven in the case of no common noise in \cite{delarue2014classical}. Here, we extend the result to the case with common noise. 

Closely related to our work is a recent paper by Bensoussan, Yam, and Zhang  \cite{Bensoussan2015} where they also consider a mean-field FBSDE under monotone type conditions similar to (H2.1)-(H2.3) in Peng and Wu \cite{peng1999} and several variations. Here, our assumptions are similar to (H3.2)-(H3.3) in \cite{peng1999} as they are 
related to a stochastic control problem, or in the mean-field setting, a mean-field game. Furthermore, our results pertains mainly to its application to mean-field game model 
particularly in the case of common noise, and, thus, we consider a system with conditional law which was not discussed in \cite{Bensoussan2015}.  

The paper is organized as follows. In section \ref{sec_fbsde}, we discuss the existence and uniqueness of the solution to the functional FBSDE 
(\ref{eq:functional FBSDE}) by extending the proof in \cite{peng1999} and provides the regularity of the solution.  
In section \ref{sec_mkvfbsde}, as an application, we study a mean-field FBSDE with conditional probabilities and gives well-posedness result and the existence of a decoupling function.  The mean-field game with common noise is discussed in section \ref{sec_mfg}. Finally, the technical proofs of Theorem \ref{wellposed},
\ref{estimate_fbsde} are provided in the Appendix.

\section{FBSDE with conditional monotone functionals}
\label{sec_fbsde}

% !TEX root = ../main.tex
\subsection{Notations and assumptions}\label{subsec_notation}

Let $(\Omega,\F,\mb{F}=\{\F_t\}_{0 \leq t \leq T},\mb{P})$ denote a complete filtered probability space augmented by $\mb{P}$-null sets on which a $d$-dimensional 
Brownian motion $(W_t)_{0 \leq t \leq T}$ is defined. Let $\R^n$ denote the $n$-dimensional Euclidean space with the usual inner product and norm, and  
$\R^{n \times d}$ denote the Hilbert space of $(n\times d)$-matrices with inner product $\la A,B \ra = \mathbf{Tr}(A^T B)$ and the induced norm 
$|A|^2 = \mathbf{Tr}(A^TA)$.

For any sub $\sigma$-algebra $\G$ of $\F$, let $\Ltwo_{\G}(\R^k)$ denote the set of $\G$-measurable $\R^k$-valued square integrable random variables. Suppose $\mb{G} = \{ \G_t \}_{0 \leq t \leq T}$ is a sub-filtration of $\mb{F}$, then let $\H^2_\mb{G}([0,T];\R^k)$ denote the set of all $\G_t$-progressively-measurable $\R^k$-valued process $\beta = (\beta_{t})_{0 \leq t \leq T}$ such that
\begin{equation*}
	\mbb{E}\l[ \int_{0}^{T} |\beta_t|^2 dt \r] < \infty
\end{equation*}
We define similarly the space $\H_\mb{G}^2([s,t];\R^k)$ for any $0 \leq s  <  t \leq T$. We will often omit the subscript and write $\H^2([0,T];\R^k)$ for $\H^2_\mb{F}([0,T];\R^k)$. 

We consider the following FBSDE
 \begin{equation}
 \label{fbsde_G}
 \begin{aligned}
 	 &dX_t = B(t,X_t,Y_t,Z_t)dt + \Sigma(t,X_t,Y_t,Z_t) dW_t \\
	 &dY_t =  F(t,X_t,Y_t,Z_t)dt+Z_tdW_t \\
&X_0 = \xi, \quad Y_T = G(X_T)
\end{aligned}
\end{equation}
where
%\begin{equation}
%\begin{aligned}
%&B: [0,T] \times \Ltwo_{\F}(\R^n) \times \Ltwo_{\F}(\R^n)  \times \Ltwo_{\F}(\R^{n\times d}) \times   \Omega \to \R^n \\
%&\Sigma: [0,T] \times \Ltwo_{\F}(\R^n) \times \Ltwo_{\F}(\R^n)  \times \Ltwo_{\F}(\R^{n\times d}) \times  \Omega \to \R^{n\times d} \\
%&F: [0,T] \times\Ltwo_{\F}(\R^n) \times \Ltwo_{\F}(\R^n)  \times \Ltwo_{\F}(\R^{n\times d}) \times \Omega \to \R^n \\
%&G: \Ltwo_{\F}(\R^n) \times \Omega \to \R^n
%\end{aligned}
%\end{equation}
\begin{equation}
\begin{aligned}
&B: [0,T] \times \Ltwo_{\F}(\R^n) \times \Ltwo_{\F}(\R^n)  \times \Ltwo_{\F}(\R^{n\times d}) \to \Ltwo_{\F}(\R^n) \\
&\Sigma: [0,T] \times \Ltwo_{\F}(\R^n) \times \Ltwo_{\F}(\R^n)  \times \Ltwo_{\F}(\R^{n\times d}) \to \Ltwo_{\F}(\R^{n\times d}) \\
&F: [0,T] \times\Ltwo_{\F}(\R^n) \times \Ltwo_{\F}(\R^n)  \times \Ltwo_{\F}(\R^{n\times d}) \to  \Ltwo_{\F}(\R^n) \\
&G: \Ltwo_{\F}(\R^n) \to  \Ltwo_{\F}(\R^n)
\end{aligned}
\end{equation}
are ``functionals'' on the space of random variables and output a random variable. Our motivation for a functional set up is to solve a mean-field FBSDE similar to 
\rref{fbsde_G_intro} but with the conditional law (see \rref{mkfbsde_eq}). This type of system arises from a mean-field game with ``common noise" through the 
stochastic maximum principle. The conditional law creates certain difficulties not presented in FBSDE \rref{fbsde_G_intro}. One approach to deal with the law, 
  particularly the conditional law, is to use purely a probabilistic method. To do so, we define \textit{lifting} functionals on the space of random variables. In that 
case, we can apply the same probabilistic technique as used for a classical FBSDE, particularly those employed in \cite{peng1999}, to prove the existence, 
uniqueness, and solution estimates.

One disadvantage of using a general framework is the fact that we may lose any specific information pertaining to our system, in this case, a conditional mean-field FBSDE. To partially resolve this, we impose ``conditional'' property in the assumptions for functionals. In this way, we are able to obtain an estimate for a solution under conditional law (see Theorem \ref{estimate_fbsde}). Our main application for this result is to show existence of decoupling function of mean-field FBSDE with conditional law. This is presented in section \ref{subsec_decoupling}. 

We now state the main assumptions on the coefficients. Fix a sub-filtration $\mb{G}=\{\G_t\}_{0 \leq t \leq T}$ of $\mb{F}=\{\F_t\}_{0 \leq t \leq T}$, we assume 

\begin{assump}\label{measurable_bfg}  For $\Phi = B,F,\Sigma$, $(\Phi(t,X_t,Y_t,Z_t))_{0 \leq t \leq T}$ are $\F_t$-progressively measurable for any $(X_t,Y_t,Z_t)_{0 \leq t \leq T} \in \H^2([0,T];\R^n \times \R^n \times \R^{n \times d})$.
\end{assump}

\begin{assump}\label{bd_bfg}
\begin{equation}\label{lg_bfg}
\mb{E}\l[  \int_0^T | B(t,0,0,0)|^2 + |\Sigma(t,0,0,0)|^2+|F(t,0,0,0)|^2 dt \r]   < \infty 
\end{equation}
\end{assump}

\begin{assump}\label{lip_mon_bfg}  There exist a constant $K$ and a set of uniformly bounded linear functionals $\{c^{(1)}_t,c^{(2)}_t\}_{0 \leq t \leq T}$ where
\begin{equation*}
	c^{(1)}_t:\Ltwo_{\F}(\R^n)\to \Ltwo_{\F}(\R^k), \quad c^{(2)}_t:\Ltwo_{\F}(\R^{n \times d}) \to \Ltwo_{\F}(\R^{k})
\end{equation*}
such that for any $t \in [0,T]$, $X,X',Y,Y'\in \Ltwo_{\F}(\R^n)$, $Z,Z' \in \Ltwo_{\F}(\R^{n\times d})$, $A \in \G_t$, $\t{A} \in \G_T$, the following holds
\begin{enumerate}[(a)]
	\item $\l(c^{(1)}_t(Y_t),c^{(2)}_t(Z_t)\r)_{0 \leq t \leq T}$  are $\F_t$-progressively measurable for any $(Y_t,Z_t)_{0 \leq t \leq T} \in \H^2([0,T];\R^n \times \R^{n \times d})$.
	\item
	\label{lip_bfg}
	\begin{equation}
		\label{lip_bfg_eq}
		\begin{aligned}
			&\mb{E}\l[\Ind_A|\D B_t|^2 \r] \leq K\mb{E}\l[ \Ind_A\l(|\D X|^2 + |c^{(1)}_t(\D Y)+c^{(2)}_t(\D Z)|^2\r) \r]  \\
			&\mb{E}\l[\Ind_A|\D\Sigma_t|^2 \r] \leq K\mb{E}\l[ \Ind_A\l(|\D X|^2 + |c^{(1)}_t(\D Y)+c^{(2)}_t(\D Z)|^2\r) \r]  \\
			&\mb{E}\l[\Ind_A|\D F_t|^2 \r] \leq K\mb{E}\l[ \Ind_A\l(|\D X|^2 + |\D Y|^2+|\D Z|^2 \r) \r] \\
			&\mb{E}\l[\Ind_{\t{A}}|\D G|^2 \r] \leq K\mb{E}\l[ \Ind_{\t{A}}|\D X|^2 \r]
		\end{aligned}
	\end{equation}
	
	\item
	\label{mon_bfg}
	There exist $\beta > 0$ such that
	\begin{equation}
		\label{mon_bfg_eq}
		\begin{aligned}
			&\mb{E}\l( \Ind_A\l[ \la \D F_t, \D X \ra + \la \D B_t, \D Y \ra +  \la \D \Sigma_t, \D Z \ra \r] \r)\\
			&\qquad\leq  -\beta\mb{E}\l[ \Ind_{A}|c^{(1)}_t(\D Y)+c^{(2)}_t(\D Z)|^2 \r], \\
			&\mb{E}[ \Ind_{\t{A}}\D G\D X ] \geq 0,
		\end{aligned}
	\end{equation}
	where 
	\begin{equation*}
		\D X = X - X', \qquad \D B_t = B(t,X,Y,Z)-B(t,X',Y',Z'),
	\end{equation*}
	and $\D Y, \D Z, \D \Sigma_t, \D F_t, \D G$ are defined similarly.
\end{enumerate}
\end{assump}

The first assumption \ref{measurable_bfg} is necessary to ensure that the stochastic integral is well-defined under these functionals set up. Assumption \ref{lip_mon_bfg} is a special Lipschitz condition and monotone condition related specifically to FBSDE arising from a stochastic control problem. It is motivated by assumption (H3.2)-(H3.3) in \cite{peng1999}.

\begin{remark}\label{remark_filtration}
Note that the assumption \ref{lip_mon_bfg} depends on the filtration $\{\G_t\}_{0 \leq t \leq T}$. Thus, when it is not evident from the context, we will specify 
the filtration when referring to these assumptions. This filtration plays an important role in controlling the level of generality of our functional framework. For instance, 
if our filtration is trivial, namely $\G_t = \{ \emptyset, \Omega\}$, then the conditions are the weakest involving only on the full expectation, and so does the 
estimate of the solutions. Consequently, one cannot do much further analysis beyond the well-posedness property. On the other hands, if our filtration is too large, for instance 
$\G_t = \F$, then we can set $A=\{ X=x, Y=y, Z=z \}$ yielding a strict deterministic bound for the functionals in exchange for finer solution estimates.
\end{remark}

%\begin{example} An example of a functional FBSDE satisfying \ref{measurable_bfg}-\ref{lip_mon_bfg} is the following; 
%\begin{equation}
%\begin{aligned}
%&B(t,X,Y,Z) = b_0 + b_1X +b_2Y + b_3\mb{E}[X|\G_t]  \\
%&\Sigma(t,X,Y,Z)= \sigma_0 + \sigma_1X +\sigma_2Y+\sigma_3\mb{E}[X|\G_t] \\
%&F(t,X,Y,Z) =   \\
%&G(X) = 
%\end{aligned}
%\end{equation}
%%such that
%%$$ a,b > 0, \quad   $$
%%It is easy to check that they satisfy \ref{measurable_bfg}-\ref{bd_bfg}. We now verify \ref{lip_mon_bfg}. Let
%%$$ c^{(1)}_t() = $$
%\end{example}

\subsection{Existence and uniqueness}\label{subsec_eufbsde} With the assumptions above, we have the following well-posed result.

\begin{thm}\label{wellposed} Let $\xi \in \Ltwo_{\F_s}(\R^n)$ and $B,F,G,\Sigma$ be functionals satisfying \ref{measurable_bfg}-\ref{lip_mon_bfg}, then the FBSDE \rref{fbsde_G} has the unique solution $(X_t,Y_t,Z_t)_{s \leq t \leq T}$. 
\end{thm}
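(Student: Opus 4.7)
The plan is to adapt the method of continuation of Peng--Wu \cite{peng1999} to the functional setting. First, taking $A = \Omega \in \G_t$ in assumption (A3) reduces the conditional bounds there to standard Lipschitz and monotonicity conditions in full expectation, which is all one needs for existence and uniqueness (the finer conditional structure is exploited later, in Theorem \ref{estimate_fbsde}). I would then introduce a parametric family of FBSDEs indexed by $\alpha \in [0,1]$ that interpolates between a trivially solvable decoupled system at $\alpha = 0$ and the full system \eqref{fbsde_G} at $\alpha = 1$, with arbitrary data $(\phi, \psi, \gamma) \in \H^2([s,T];\R^n \times \R^n \times \R^{n \times d})$ and $\eta \in \Ltwo_{\F_T}(\R^n)$ added to the drift, diffusion, driver, and terminal condition respectively. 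Let $I \subset [0,1]$ collect those $\alpha$ for which the parametrized system admits a unique $\H^2$ solution for every admissible data. Since $0 \in I$, it suffices to show $I$ contains $\alpha_0 + \delta$ whenever it contains $\alpha_0$, with $\delta > 0$ a fixed step size independent of $\alpha_0$; iterating then forces $I = [0,1]$.

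The heart of the argument is a Banach contraction. Given solvability at $\alpha_0$, define $\Phi: \H^2 \to \H^2$ by sending $(u,v,w)$ to the unique solution $(X,Y,Z)$ of the $\alpha_0$-system in which the $\delta$-increment of the coefficients is ``frozen'' at $(u,v,w)$ and the terminal at $u_T$. For two inputs $(u,v,w), (u',v',w')$ with outputs $(X,Y,Z),(X',Y',Z')$, write $\D X = X - X'$, etc. Applying It\^o's formula to $\la \D X_t, \D Y_t\ra$ on $[s,T]$, taking expectation, and invoking (A3)(c) on the $\alpha_0$-components yields
\begin{equation*}
\alpha_0 \beta \, \mb{E}\int_s^T \bigl|c^{(1)}_t(\D Y) + c^{(2)}_t(\D Z)\bigr|^2 dt \,\leq\, \delta \cdot \mc{R},
\end{equation*}
where $\mc{R}$ collects cross terms such as $\mb{E}[\la \D X, \D F^u\ra]$ and $\mb{E}[\la \D B^u, \D Y\ra]$; these are controlled via Young's inequality and (A3)(b) by $\|(\D u,\D v,\D w)\|_{\H^2}^2 + \textrm{(small)} \cdot \|(\D X,\D Y,\D Z)\|_{\H^2}^2$. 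Combining this with a forward Gronwall estimate on $\mb{E}|\D X_t|^2$ and the standard BSDE a priori bound on $\mb{E}|\D Y_t|^2 + \mb{E}\int|\D Z|^2 dt$ (which uses the fully-Lipschitz bound on $F$ in (A3)(b)), the factors of $\alpha_0$ from the monotonicity and of $\alpha_0^2$ from the Lipschitz bounds on $B,\Sigma$ combine to give
\begin{equation*}
\|(\D X, \D Y, \D Z)\|_{\H^2}^2 \,\leq\, C \delta \, \|(\D u,\D v,\D w)\|_{\H^2}^2,
\end{equation*}
with $C$ depending only on $K$, $\beta$, $T$, and the operator norms of $c^{(1)}, c^{(2)}$ --- not on $\alpha_0$. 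Choosing $\delta < 1/C$ makes $\Phi$ a contraction, and its fixed point solves the FBSDE at $\alpha_0 + \delta$. Uniqueness follows from the same a priori inequality applied to two putative solutions with identical data.

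The main obstacle is the asymmetric Lipschitz structure in (A3)(b): the bounds on $B$ and $\Sigma$ control only the combined quantity $c^{(1)}_t(\D Y) + c^{(2)}_t(\D Z)$, and the monotonicity in (A3)(c) likewise produces only this same combination, not $\D Y$ or $\D Z$ individually. Separating them requires leaning on the stronger, fully-Lipschitz bound on $F$ through the backward BSDE estimate, and carefully tracking how the factor $\alpha_0$ from the monotonicity cancels against the $\alpha_0^2$ from the Lipschitz bounds on $B,\Sigma$, so that the final step size is $\alpha_0$-independent. This delicate bookkeeping --- essentially the functional analogue of the (H3.3) argument of \cite{peng1999}, where $c^{(1)}_t, c^{(2)}_t$ are now abstract bounded linear operators rather than matrices --- is the core technical work; once in place, the passage from $\alpha_0 \in I$ to $\alpha_0 + \delta \in I$ and thence to $1 \in I$ is routine.
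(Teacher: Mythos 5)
Your proposal is correct, and it shares the paper's overall skeleton (method of continuation plus Banach contraction in the spirit of Peng--Wu, with assumption \ref{lip_mon_bfg} used only through full expectations, i.e.\ $A=\Omega$), but your continuation path is genuinely different from the one the paper takes. The paper does not interpolate from a decoupled system: in \rref{fbsde_alpha} it bridges the full system with an artificial \emph{coupled} system built from the adjoint operators, with drift $-\bar{c}^{(1)}_t\l(c^{(1)}_t(Y_t)+c^{(2)}_t(Z_t)\r)$, diffusion $-\bar{c}^{(2)}_t\l(c^{(1)}_t(Y_t)+c^{(2)}_t(Z_t)\r)$ and driver $-X_t$. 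The gain is that every interpolated system then satisfies \ref{lip_mon_bfg}\rref{mon_bfg} with constant at least $\min(\beta,1)$ uniformly in $\alpha$, since the $(1-\alpha)$-terms contribute exactly $-(1-\alpha)\mb{E}\l[|\D X|^2+|c^{(1)}_t(\D Y)+c^{(2)}_t(\D Z)|^2\r]$; so each continuation step uses uniform a priori estimates with no degenerating constants. The cost is that the $\alpha=0$ endpoint is itself a nontrivial coupled FBSDE, which the paper must solve by a second, nested continuation (its steps (iii)--(iv), citing Lemmas 2.4--2.5 of Peng--Wu). Your route instead scales the coefficients by $\alpha$ and starts from the truly decoupled system, accepting a monotonicity constant $\alpha_0\beta$ that degenerates as $\alpha_0\to 0$; this is rescued, exactly as you argue, because monotonicity is invoked only to bound $\mb{E}\int_s^T|c^{(1)}_t(\D Y_t)+c^{(2)}_t(\D Z_t)|^2dt$, which enters the forward estimate multiplied by the squared Lipschitz constant $\alpha_0^2K$ of the scaled $B,\Sigma$, and $\alpha_0^2/(\alpha_0\beta)\le 1/\beta$, while the backward components are controlled by the standard BSDE estimate using the full Lipschitz bounds on $F$ and $G$ and need no monotonicity at all; uniqueness at each level $\alpha>0$ follows from your a priori inequality. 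So you trade the paper's two-level continuation for a single-level one, at the price of the degenerate-constant bookkeeping, which does close. One detail you should make explicit: since the frozen terminal datum $G(u_T)$ and the cross term $\mb{E}\la \D X_T,\D G^u\ra$ in the monotonicity step involve terminal values of the inputs, the contraction must be run in a norm that controls them --- e.g.\ $\mb{E}\sup_t|\cdot|^2$ on the $X,Y$ components, or $\H^2$ paired with $\Ltwo_{\F_T}(\R^n)$ as in Peng--Wu --- rather than the bare $\H^2$ norm.
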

 
\begin{proof} The proof for both existence and uniqueness are naturally extended from Theorem 3.1 in \cite{peng1999} for a classical FBSDE. It is based on probabilistic arguments using the method of continuation and Banach fixed point theorem on the space of square-integrable random variables. The proof is summarized in \ref{proof_wellposed}. 
\end{proof}

\subsection{Estimate}

We now give estimates of the solution to \rref{fbsde_G}. These estimates are given under conditional expectation on the same filtration specified in the assumptions. This filtration controls the level of generality of our functionals in the FBSDE (see Remark \ref{remark_filtration}). These estimates, particularly in its conditional form, will be used frequently in the subsequent sections when we discuss the existence of a decoupling function for the conditional mean-field FBSDE and the Markov property for mean-field games with common noise. 

\begin{thm}
	\label{estimate_fbsde}
%	Suppose that FBSDE \rref{fbsde_G} with initial $\xi, \xi' \in \Ltwo_{\tF_s}(\R^n)$ and functionals $(B,\Sigma,F,G)$, $(B',\Sigma',F',G')$ respectively are 
%	solvable. Denote their solutions by  $\theta_{t} = (X_{t}, Y_{t}, Z_{t})$ and $\theta'_{t} = (X'_{t}, Y'_{t}, Z'_{t})$. Suppose that $(B,\Sigma,F,G)$ satisfies 
%	assumptions \ref{measurable_bfg}-\ref{lip_mon_bfg}, then there exists a constant $C_{K, T} > 0$ depending only on K and T such that the following estimate holds; 
%	for any A $\in$ ${\mathcal{G}}_{s}$
	Assume that two sets of functionals $(B,\Sigma,F,G)$, $(B',\Sigma',F',G')$ satisfy \ref{measurable_bfg}-\ref{lip_mon_bfg} with the same filtration $\{\G_t\}_{s \leq t \leq T}$ and 
	$\theta_{t} = (X_{t}, Y_{t}, Z_{t})$, $\theta'_{t} = (X'_{t}, Y'_{t}, Z'_{t})$ are the solutions to the FBSDE \rref{fbsde_G} 
	with the coefficients $(B,\Sigma,F,G)$, $(B',\Sigma',F',G')$ and initial conditions $\xi, \xi' \in \Ltwo_{\F_s}(\R^n)$, respectively.  
	Then there exists a constant $C_{K, T} > 0$ depending only on $K$ and $T$ such that for any $A\in{\mathcal{G}}_{s}$,
	\begin{equation}
		\label{estimate_diff_bfg}
		\begin{aligned}
			&\mb{E} \l[ \sup_{s\leq t \leq T}\Ind_{A}|\Delta X_t|^2 + \sup_{s\leq t \leq T} \Ind_{A}|\Delta Y_t|^2 
			+ \int^{T}_{s}(\Ind_{A}|\Delta Z_t|^2)dt \r]\\ 
			&\quad \leq C_{K, T}\mb{E} \l[ \Ind_{A} \l( |\Delta \xi|^{2} +|\bar{G}|^{2} + \int^{T}_{s}(|\bar{F}_{t}|^{2} + |\bar{B}_{t}|^{2} 
			+ |\bar{\Sigma}_{t}|^{2} \r) \r] 
		\end{aligned}
	\end{equation}
	where $\Delta X_t = X_t-X'_t$ and $\Delta Y_t, \Delta Z_t, \Delta \xi$ are defined similarly,  $\bar{\Phi}_{t}= \Phi(t, \theta'_{t}) -  \Phi'(t, \theta'_{t})$ for $\Phi = B, \Sigma, F$ and $\bar{G} = G(X'_T) - G'(X'_T)$.
\end{thm}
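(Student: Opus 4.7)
\textbf{Proof plan for Theorem \ref{estimate_fbsde}.}

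The plan is to adapt the standard Peng--Wu a priori estimate for fully coupled FBSDEs to the functional, conditional setting by multiplying every identity and inequality by $\Ind_{A}$ before taking expectation. Since $A\in\G_{s}\subset\F_{s}$, $\Ind_{A}$ is $\F_{t}$-measurable for all $t\in[s,T]$, so it can be pulled through the stochastic integrals and It\^o identities. Throughout, I write $\Delta \Phi_{t} = \Phi(t,\theta_{t}) - \Phi(t,\theta'_{t})$ for $\Phi=B,\Sigma,F$, $\Delta G = G(X_{T})-G(X'_{T})$, so that the difference system reads
\begin{equation*}
d\Delta X_{t} = (\Delta B_{t}+\bar{B}_{t})dt + (\Delta\Sigma_{t}+\bar{\Sigma}_{t})dW_{t},\quad d\Delta Y_{t} = (\Delta F_{t}+\bar{F}_{t})dt + \Delta Z_{t}dW_{t},
\end{equation*}
with $\Delta X_{s}=\Delta\xi := \xi-\xi'$ and $\Delta Y_{T}=\Delta G+\bar{G}$.

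The central step is an It\^o computation for $\la \Delta X_{t},\Delta Y_{t}\ra$ from $s$ to $T$. After multiplying by $\Ind_{A}$, taking expectation (the stochastic integral is a true martingale by a standard localisation plus the $\H^{2}$ bounds coming from Theorem~\ref{wellposed}) and using the monotonicity part of \ref{lip_mon_bfg}, together with $\E[\Ind_{A}\la \Delta X_{T},\Delta G\ra]\ge 0$ applied to $\tilde{A}=A\in\G_{T}$, I obtain
\begin{equation*}
\beta\,\E\!\l[\Ind_{A}\!\int_{s}^{T}\!\!|c^{(1)}_{t}(\Delta Y)+c^{(2)}_{t}(\Delta Z)|^{2}dt\r] \le \E[\Ind_{A}\la\Delta\xi,\Delta Y_{s}\ra] - \E[\Ind_{A}\la\Delta X_{T},\bar{G}\ra] + R,
\end{equation*}
where $R$ collects the cross terms $\la\Delta X_{t},\bar{F}_{t}\ra$, $\la\bar{B}_{t},\Delta Y_{t}\ra$, $\la\bar{\Sigma}_{t},\Delta Z_{t}\ra$ and is bounded by Young's inequality in terms of small multiples of $\|\Delta X\|^{2}$, $\|\Delta Y\|^{2}$, $\|\Delta Z\|^{2}$ plus $\|\bar{B}\|^{2}+\|\bar{\Sigma}\|^{2}+\|\bar{F}\|^{2}$ (all under $\Ind_{A}$).

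I then couple this with the two one-sided estimates. For the forward part, applying It\^o to $|\Delta X_{t}|^{2}\Ind_{A}$, using the Lipschitz bounds on $B,\Sigma$ in \ref{lip_mon_bfg}\ref{lip_bfg} (which crucially only produce $|\Delta X|^{2}+|c^{(1)}(\Delta Y)+c^{(2)}(\Delta Z)|^{2}$ on the right), Burkholder--Davis--Gundy, and conditional Gr\"onwall gives
\begin{equation*}
\E\l[\Ind_{A}\sup_{s\le t\le T}|\Delta X_{t}|^{2}\r] \le C\,\E\l[\Ind_{A}\l(|\Delta\xi|^{2}+\!\int_{s}^{T}\!(|c^{(1)}_{t}(\Delta Y)+c^{(2)}_{t}(\Delta Z)|^{2}+|\bar{B}_{t}|^{2}+|\bar{\Sigma}_{t}|^{2})dt\r)\r].
\end{equation*}
For the backward part, It\^o on $|\Delta Y_{t}|^{2}\Ind_{A}$ together with the Lipschitz bound on $F$ and Young's inequality with a small parameter $\eps$ chosen so that $\eps K<1/2$ absorbs the unavoidable $|\Delta Z|^{2}$ term on the right; combined with the Lipschitz bound on $G$ to handle $|\Delta Y_{T}|^{2}$ and another Gr\"onwall step in $t$, this yields
\begin{equation*}
\E\l[\Ind_{A}\!\sup_{s\le t\le T}\!|\Delta Y_{t}|^{2}+\Ind_{A}\!\int_{s}^{T}\!|\Delta Z_{t}|^{2}dt\r] \le C\,\E\l[\Ind_{A}\l(|\Delta X_{T}|^{2}+|\bar{G}|^{2}+\!\int_{s}^{T}\!(|\Delta X_{t}|^{2}+|\bar{F}_{t}|^{2})dt\r)\r].
\end{equation*}

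The closing step combines the three inequalities. Substituting the forward bound into the backward one expresses $\|\Delta Y\|^{2}+\|\Delta Z\|^{2}$ in terms of $\|c^{(1)}(\Delta Y)+c^{(2)}(\Delta Z)\|^{2}$, $|\Delta\xi|^{2}$, $|\bar{G}|^{2}$ and the source integrals; then the monotonicity inequality, again after absorbing the small Young terms coming from $R$, produces an estimate on $\|c^{(1)}(\Delta Y)+c^{(2)}(\Delta Z)\|^{2}$ involving only $|\Delta\xi|^{2}$, $|\bar{G}|^{2}$ and the source norms. Feeding this back into the forward and backward bounds yields \rref{estimate_diff_bfg}. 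The main obstacle I anticipate is this final absorption/bootstrap: the monotonicity only controls the combination $c^{(1)}(\Delta Y)+c^{(2)}(\Delta Z)$, so the Young parameters have to be chosen simultaneously in three coupled inequalities so that the coefficient of every term that appears on both sides is strictly smaller than $1$ --- this is where the constant $C_{K,T}$ depending only on $K$ and $T$ (and implicitly on $\beta$ and the norm of $c^{(1)},c^{(2)}$ fixed by \ref{lip_mon_bfg}) emerges.
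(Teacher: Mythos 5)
Your plan is correct and follows essentially the same route as the paper's proof: the It\^o computation on $\la \Delta X_t,\Delta Y_t\ra$ combined with the monotonicity condition (exploiting $A\in\G_s\subset\G_T$ so that $\Ind_A$ passes through the stochastic integrals and the $\D G$ term), separate It\^o/BDG/Gr\"onwall estimates for $|\Delta X_t|^2$ and $|\Delta Y_t|^2$ in which the Lipschitz condition only produces the combination $|c^{(1)}_t(\Delta Y)+c^{(2)}_t(\Delta Z)|^2$, and a final Young-absorption step coupling the three inequalities. The paper's appendix executes exactly this scheme, including the same closing bootstrap you flag as the delicate point.
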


\begin{proof} See \ref{proof_estimate_diff}.
\end{proof}

\begin{corollary}
	\label{est_functional}
	Let $(X_t,Y_t,Z_t)_{s \leq t \leq T}$ be the solution to FBSDE \rref{fbsde_G} with the initial condition $\xi \in \Ltwo_{\F_s}(\R^n)$ and coefficients $(B,\Sigma,F,G)$ satisfying \ref{measurable_bfg}-\ref{lip_mon_bfg}.  
	Then there exists a constant $C_{K, T} > 0$ depending only on $K$ and $T$ such that for any $A\in{\mathcal{G}}_{s}$,
	\begin{equation}
		\label{est_functional_eq}
		\begin{aligned}
			&\mb{E}\l[ \Ind_A \sup_{s \leq t \leq T} |X_t|^2 +  \Ind_A\sup_{s \leq t \leq T} |Y_t|^2  +  \Ind_A\int_s^T |Z_t|^2 dt \r] \\
			&\quad \leq  C_{K,T}\l( \mb{E}\l[\Ind_A |\xi|^2+ \Ind_A|G(0)|^2\r] \r)\\
			&\quad\quad + C_{K,T}\mb{E}\l[\Ind_A \int_s^T \l(|B(t,0,0,0)|^2 + |F(t,0,0,0)|^2+|\Sigma(t,0,0,0)|^2  \r) dt \r]
		\end{aligned}
	\end{equation}
\end{corollary}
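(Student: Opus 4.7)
The plan is to derive the corollary as an immediate consequence of Theorem \ref{estimate_fbsde} by comparing $(X_t, Y_t, Z_t)$ with the trivial solution of an FBSDE whose coefficients vanish identically. Concretely, I would take the second family to be $B' \equiv 0$, $\Sigma' \equiv 0$, $F' \equiv 0$, $G' \equiv 0$, together with initial condition $\xi' = 0 \in \Ltwo_{\F_s}(\R^n)$. The corresponding FBSDE is solved by the trivial triple $(X'_t, Y'_t, Z'_t) \equiv (0, 0, 0)$. Substituting this into \rref{estimate_diff_bfg} one finds $\bar{B}_t = B(t, 0, 0, 0)$, $\bar{\Sigma}_t = \Sigma(t, 0, 0, 0)$, $\bar{F}_t = F(t, 0, 0, 0)$, $\bar{G} = G(0)$, $\Delta \xi = \xi$, and $(\Delta X_t, \Delta Y_t, \Delta Z_t) = (X_t, Y_t, Z_t)$, which reproduces \rref{est_functional_eq} verbatim.

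The only step requiring verification is that the zero family satisfies \ref{measurable_bfg}--\ref{lip_mon_bfg} with the same filtration $\{\G_t\}$. Measurability \ref{measurable_bfg} and the growth bound \ref{bd_bfg} are immediate. For \ref{lip_mon_bfg} I would pick the auxiliary uniformly bounded linear maps to be the zero maps $c^{(1)}_t \equiv 0$ and $c^{(2)}_t \equiv 0$, so that both the Lipschitz inequalities \rref{lip_bfg_eq} and the monotonicity inequalities \rref{mon_bfg_eq} reduce to $0 \leq 0$ for any $K$ and any $\beta > 0$. Since Theorem \ref{estimate_fbsde} only requires the two coefficient families to share a common filtration, not a common set of constants or auxiliary functionals, this is fully compatible with the original choice made for $(B, \Sigma, F, G)$.

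The main, mild obstacle lies precisely in this compatibility check: one must confirm that Theorem \ref{estimate_fbsde} allows distinct $c^{(1)}, c^{(2)}$ for the two systems and that the resulting constant $C_{K, T}$ depends only on the Lipschitz constant of the \emph{original} family (the zero family trivially satisfying the hypotheses with $K = 0$). Should a shared $(c^{(1)}, c^{(2)})$ be required, the same bound can be recovered by re-running the energy estimate underlying Theorem \ref{estimate_fbsde} with $\theta'_t \equiv 0$: the monotonicity term then enters with the favorable sign and can be dropped, while the inhomogeneities are absorbed via $|\Phi(t, \theta_t)|^2 \leq 2|\Phi(t, \theta_t) - \Phi(t, 0)|^2 + 2|\Phi(t, 0)|^2$ for $\Phi \in \{B, \Sigma, F\}$ and the analogous bound for $G$.
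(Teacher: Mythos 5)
Your proposal is exactly the paper's proof: the paper disposes of this corollary in one line by applying Theorem \ref{estimate_fbsde} with $\xi'\equiv 0$ and $(B',\Sigma',F',G')\equiv(0,0,0,0)$, so that the trivial solution $(0,0,0)$ plays the role of $\theta'$ and the inhomogeneities $\bar B_t,\bar\Sigma_t,\bar F_t,\bar G$ become the coefficients evaluated at zero. Your additional compatibility check is a correct (and worthwhile) elaboration of what the paper leaves implicit — in particular, the zero family does need its own auxiliary maps $c^{(1)}\equiv c^{(2)}\equiv 0$ to satisfy the monotonicity inequality \rref{mon_bfg_eq}, and this is harmless because the proof of Theorem \ref{estimate_fbsde} only ever invokes the Lipschitz and monotonicity hypotheses of the unprimed family, so $C_{K,T}$ indeed depends only on the original $K$.
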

\begin{proof}
	Apply Theorem \ref{estimate_fbsde} with $\xi'\equiv0$ and $(B',\Sigma',F',G')\equiv(0,0,0,0)$.
\end{proof}

\section{Mean-field FBSDE with conditional law}
\label{sec_mkvfbsde}

% !TEX root = ../main.tex

In this section, we discuss an application of the results on the functional FBSDE to a class of mean-field FBSDE with conditional law. In addition, we show the existence of a deterministic decoupling function for this type of FBSDE.

\subsection{Problem formulation} Following similar notations as defined in section \ref{subsec_notation}, we consider the following system

\begin{equation}
	\label{mkfbsde_eq}
	\begin{aligned}
		dX_{t} &= b(t, X_{t}, Y_{t}, Z_{t},\tilde{Z}_{t},\mb{P}_{(X_t, Y_t, Z_t , \tZ_t)|\tF_t})dt\\
		&\quad + \sigma(t, X_{t}, Y_{t}, Z_{t}, \tilde{Z}_{t}, \mb{P}_{(X_t, Y_t, Z_t , \tZ_t)|\tF_t})dW_{t} \\
		&\quad + \tsg(t, X_{t}, Y_{t}, Z_{t}, \tilde{Z}_{t}, \mb{P}_{(X_t, Y_t, Z_t , \tZ_t)|\tF_t})d\tilde{W}_{t} \\
		dY_{t} &= f(t, X_{t},  Y_{t}, Z_{t}, \tilde{Z}_{t}, \mb{P}_{(X_t, Y_t, Z_t , \tZ_t)|\tF_t})dt + Z_{t}dW_{t} + \tilde{Z}_{t}d\tilde{W}_{t}\\
		X_{0} &= \xi, \quad Y_{T} = g(X_{T}, \mb{P}_{X_T|\tF_T})
	\end{aligned}
\end{equation}
where $(W_t)_{0 \leq t \leq T}, (\tW_t)_{0 \leq t \leq T}$ are independent Brownian 
motions in $\R^{d_1},\R^{d_2}$, $\mb{P}_{(X_t, Y_t, Z_t , \tZ_t)|\tF_t}$ denotes the 
law of $(X_{t}, Y_{t}, Z_{t} , \tZ_{t})$ conditional on $\tF_t$, where $\tF_t$ denotes 
the $\sigma$-field generated by $\{\tW_s; 0 \leq s \leq t \}$. As we shall see in the 
next section, this FBSDE is related to mean-field games model with common noise. For 
this reason, we will refer to $(W_t)_{0 \leq t \leq T}$ as \textit{individual} noise, 
$(\tW_t)_{0 \leq t \leq T}$ as \textit{common} noise and $(\tF_t)_{0 \leq t \leq T}$ 
as the \textit{common noise filtration}. The tuple $(X_{t}, Y_{t}, Z_{t} , \tZ_{t})_{0 \leq t \leq T} $ is called a solution to \rref{mkfbsde_eq} if it is in $\Htwo{p}$, 
where $\R^p = \R^n \times \R^n \times \R^{n \times d_1} \times \R^{n \times d_2}$, and 
satisfies \rref{mkfbsde_eq}. Note that when $(X_t,Y_t,Z_t,\tZ_t)_{0 \leq t \leq T} \in \Htwo{p}$, the existence of $\tF_t$-progressively measure version of $(\mb{P}_{(X_t, Y_t, Z_t , \tZ_t)|\tF_t})_{0 \leq t \leq T}$ is guaranteed by Lemma 1.1 in \cite{kurtz1988} for instance.

The functions in \rref{mkfbsde_eq} are given and defined on the following spaces;
\begin{equation}
\begin{aligned}
& b,f: [0,T]\times\R^p \times \Ptwo(\R^p) \to \R^n, & \sigma: [0,T]\times\R^p \times \Ptwo(\R^p) \to \R^{n \times d_1} \\
& \tsg:[0,T]\times \R^p \times \Ptwo(\R^p) \to \R^{n \times d_2}, & g: \R^n \times \Ptwo(\R^n) \to \R^{n}
\end{aligned}
\end{equation}
where $\Ptwo(\R^d)$ denotes the space of Borel probability measures on $\R^d$ with finite second moment, i.e. a probability measure $\mu$ such that  $\int_{\R^d} x^2 d\mu(x) < \infty $. It is a complete separable metric space equipped with a second-order Wasserstein metric $\Wtwo(\cdot,\cdot)$ defined as
\be\label{wass}
 \Wtwo(\mu,\nu) = \inf_{\gamma \in \Gamma_{\mu,\nu}} \l( \int_{\R^d \times \R^d} |x-y|^2 \gamma(dx,dy) \r)^{\frac{1}{2}}
 \ee
where $ \Gamma_{\mu,\nu}$ denotes the space of probability measure on $\R^d \times \R^d$ with marginal $\mu,\nu$ respectively.

For the conditional probability flows, we introduce the space  $\H^2_{\mb{G}}([0,T];\Ptwo(\R^d))$ for all the $\G_t$-progressively-measurable $\Ptwo(\R^d)$-valued processes $(m_{t})_{0 \leq t \leq T}$ such that 
%\todo{we want sup? Z_t one doesn't have sup}
\begin{equation*}
	\mb{E}\l[ \int_0^T \int_{\R^d} |x|^{2} m_{t}(dx) dt \r] < \infty
\end{equation*}
We will mainly be interested in $\Ltwo_{\t{\mb{F}}}([0,T];\Ptwo(\R^d))$ where $\t{\mb{F}}=\{\tF_t\}_{0 \leq t \leq T}$ with $\tF_t=\sigma(\tW_s)_{0 \leq s \leq t}$ being the common noise filtration.
Note that when $(X_t,Y_t,Z_t,\tZ_t)_{0 \leq t \leq T}$ $\in \Htwo{p}$, we have $(\mb{P}_{(X_t,Y_t,Z_t,\tZ_t | \tF_t)})_{0 \leq t \leq T}\in \H^2_{\t{\mb{F}}}([0,T];\Ptwo(\R^p)) $. 

In order to construct conditional expectation given common noise explicitly, we separate the path space for individual noise and common noise. From now and throughout this section, we assume that $(\Omega,\F,\mb{P})$ is in the form $ (\Omega^0 \times \t{\Omega},\F^0 \otimes \tF,\mb{P}^0 \otimes \tP)$ where the individual noise $W_t$ and common noise $\tW_t$ are supported in the space $ (\Omega^0,\F^0 ,\mb{P}^0 )$ and $(\t{\Omega},\tF,\tP)$ respectively. We will also assume that $(\t{\Omega},\tF,\tP)$ is the canonical sample space of the Brownian motion $(\tW_t)_{0 \leq t \leq T}$ with $\tF$ being its natural filtration completed with $\mb{P}$-null sets. We also assume that $\Omega^0$ is sufficiently rich (Polish and atom-less) that for any $m \in \Ptwo(\R^n)$, we can find $\xi \in \Ltwo(\Omega^0;\R^n)$ independent of all Brownian motions with law $m$. We first provide the following lemma which will be proved useful in a subsequent section.
\begin{lemma}\label{representation_m_xi} Consider a Polish and atom-less probability space $\Omega$. For two measures $m_{1}, m_{2}\in\Ptwo(\R^d)$ satisfying $\W_2(m_{1}, m_{2})< \eps$ and any random variable $\xi \in \Ltwo(\Omega;\R^d)$ with law $m_{1}$, there exist $\eta \in \Ltwo(\Omega;\R^d)$ such that $\eta$ has law $m_{2}$ and  $(\mb{E} |\xi - \eta|^{2}|)^{\frac{1}{2}} < \eps$.
\end{lemma}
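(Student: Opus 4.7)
The plan is to realize the lemma as a standard measurable coupling construction from optimal transport. The starting point is to use the definition of $\Wtwo$: since $\Wtwo(m_1, m_2) < \eps$, I can choose (or approximate by) a coupling $\gamma \in \Gamma_{m_1, m_2}$ such that
\begin{equation*}
\int_{\R^d \times \R^d} |x-y|^2 \, \gamma(dx, dy) < \eps^2.
\end{equation*}
The goal then reduces to constructing a random variable $\eta$ on $\Omega$ so that the pair $(\xi, \eta)$ has joint law $\gamma$, because then automatically $\eta \sim m_2$ and $(\mb{E}|\xi-\eta|^2)^{1/2} < \eps$.

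To carry this out, I would disintegrate the coupling. Since $\R^d$ is Polish, there is a regular conditional probability kernel $K : \R^d \times \mathcal{B}(\R^d) \to [0,1]$ satisfying $\gamma(dx, dy) = m_1(dx)\, K(x, dy)$. Next, because $\Omega$ is Polish and atom-less, I can produce a $\text{Unif}(0,1)$ random variable $U$ that is independent of $\xi$ (this is the key use of the richness hypothesis on $\Omega$: any such $\Omega$ is measure-isomorphic to $([0,1], \text{Leb})$, so it supports independent uniforms beyond the sub-$\sigma$-algebra $\sigma(\xi)$). Finally, I would apply the standard parametrization of probability kernels on $\R^d$ by a single uniform variable to obtain a Borel map $\Phi : \R^d \times [0,1] \to \R^d$ such that, for every $x$, the push-forward of Lebesgue measure under $\Phi(x, \cdot)$ equals $K(x, \cdot)$. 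Setting $\eta := \Phi(\xi, U)$ gives $(\xi, \eta) \sim \gamma$, which yields both $\eta \sim m_2$ and the required $L^2$ estimate.

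The main obstacle will be the two measure-theoretic facts just invoked: the existence of an independent uniform $U$ on the given space $\Omega$, and the Borel parametrization $\Phi$ of the kernel $K$. The first is handled by the atom-less Polish assumption on $\Omega$ (together with the fact that $\sigma(\xi)$ is countably generated, so one can split off an independent uniform factor). The second is classical and can be done explicitly via conditional CDFs in dimension $d=1$ and iteratively (coordinate by coordinate, using regular conditional distributions) in higher dimensions, or more abstractly via the Blackwell-Dubins-Freedman transfer principle. Once these facts are in place the verification that $(\xi, \eta) \sim \gamma$ is immediate from independence of $U$ and $\xi$ together with the defining property of $\Phi$, and the final inequality follows directly from the bound on $\gamma$.
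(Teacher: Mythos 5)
There is a genuine gap at the step where you claim that, because $\Omega$ is Polish and atom-less, you can produce a $\mathrm{Unif}(0,1)$ random variable $U$ independent of $\xi$. Atom-lessness makes $(\Omega,\F,\mb{P})$ measure-isomorphic to $([0,1],\mathrm{Leb})$, but it says nothing about how much of $\F$ remains once $\sigma(\xi)$ is accounted for, and in general nothing does. Concretely, take $\Omega=[0,1]$ with Lebesgue measure and $\xi(\omega)=(\omega,0,\dots,0)\in\R^d$: every hypothesis of the lemma is satisfied, yet $\sigma(\xi)$ is the full Borel $\sigma$-algebra up to null sets, so every random variable on $\Omega$ is a.s.\ a Borel function of $\xi$, and any random variable independent of $\xi$ is a.s.\ constant --- no such $U$ exists. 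Neither countable generation of $\sigma(\xi)$ nor the Blackwell--Dubins--Freedman transfer repairs this; what your construction actually needs is that $\mb{P}$ be \emph{conditionally atom-less over} $\sigma(\xi)$, which is exactly what fails here. The failure is not cosmetic: for this $\xi$ any candidate $\eta$ satisfies $\eta=g(\xi)$ a.s., so $\mb{P}_{(\xi,\eta)}$ is concentrated on a graph; but a perfectly admissible choice of $\gamma$ in your scheme --- the optimal coupling for $m_1$ uniform on $[0,1]\times\{0\}$ and $m_2$ uniform on $[0,1]\times\{-\delta,\delta\}$ (with $d=2$, $\delta<\eps$) --- sends each point $(x,0)$ to $(x,\pm\delta)$ with conditional probability $\tfrac12$ each, is concentrated on no graph, and hence is not the law of $(\xi,\eta)$ for \emph{any} $\eta$ on this $\Omega$. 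So the plan ``fix a near-optimal $\gamma$, then realize it over $\xi$'' cannot be carried out as stated.

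The lemma is still true in this example, but only because one can select a \emph{different} near-optimal coupling that is (approximately) induced by a map; making that selection is the content of the Ambrosio--Pratelli theorem (equality of the Monge and Kantorovich infima when $m_1$ is atom-less), and patching your argument that way, plus a separate conditional randomization on the atoms of $m_1$, would be a genuinely different and longer proof. The paper sidesteps the issue entirely: it realizes a near-optimal coupling as a fresh pair $(X,Y)$ on $\Omega$ (possible by atom-lessness), then invokes Lemma 6.4 of \cite{cardaliaguet2010} to get a measurable, measure-preserving bijection $\tau:\Omega\to\Omega$ with $|X\circ\tau-\xi|_\infty<\eps'$, and sets $\eta=Y\circ\tau$. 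Composition with $\tau$ preserves laws and $L^2$ distances, and no independence from $\xi$ is ever needed, which is precisely why that argument covers the degenerate case above while yours does not.
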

\begin{proof}
By the definition of Wasserstein metric, there exists a small enough $\eps' > 0$ and two random variables $X$ and $Y$ with law $m_{1}$ and $m_{2}$ respectively such that
\begin{equation*}
	(\mb{E} |X - Y|^{2})^{\frac{1}{2}} < \eps - \eps'
\end{equation*}

Now by Lemma 6.4 in \cite{cardaliaguet2010}, there exists a bijective mapping $\tau:\Omega \to \Omega$ that is measurable, measure-preserving, and satisfies 
\begin{equation*}
	(\mb{E} |X \circ \tau - \xi|^{2})^{\frac{1}{2}} \leq |X \circ \tau- \xi|_{\infty} < \eps'
\end{equation*}
Since $\tau$ is measure-preserving, $Y \circ \tau $ also has distribution $m_{2}$ and
\begin{equation*}
	(\mb{E} |X \circ \tau - Y \circ \tau |^{2} )^{\frac{1}{2}} = (\mb{E} |X - Y|^{2} )^{\frac{1}{2}}  < \eps - \eps' \quad \Rightarrow \quad (\mb{E} |\xi- Y \circ \tau |^{2})^{\frac{1}{2}} < \eps.
\end{equation*}
Thus, we can use $Y \circ \tau $ as our $\eta$.
\end{proof}

\subsection{Existence and uniqueness of a solution and its estimate.} Note that the coefficients $b,\sigma,\tsg,f,g$, are functionals of the law in their last arguments. To prove existence and uniqueness result for system \rref{mkfbsde_eq}, one approach is to employ the Schauder fixed point theorem. That is, we fix a flow of probability measures $(m_t)_{0 \leq t \leq T}$, replace the probability measure arguments in \rref{mkfbsde_eq} by $m_t$, solve a classical system of FBSDE, and consider the law of the solution, namely $\mb{P}_{(X_{t}, Y_{t}, Z_{t} , \tZ_{t})|\tF_t}$. This map can be described as follows;
\begin{equation}\label{diagram1}
 (m_t)_{0 \leq t \leq T} \quad\Rightarrow\quad  (X_{t}, Y_{t}, Z_{t} , \tZ_{t})_{0 \leq t \leq T} \quad\Rightarrow\quad  (\mb{P}_{(X_{t}, Y_{t}, Z_{t} , \tZ_{t})|\tF_t})_{0 \leq t \leq T} 
 \end{equation}
The fixed point of these operations then yields the solution to \rref{mkfbsde_eq}. This set up was used in \cite{carmona2013probabilistic,carmona2013mean} to prove well-posedness of \rref{mkfbsde_eq} without conditional law. However, it is considerably more difficult to extend the result using this argument to the case of conditional law since it involves the space of \textit{stochastic} flow of probability measures as opposed to the \textit{deterministic} one. In this larger space, it is non-trivial how one can find an invariant compact subset so that the Schauder fixed point theorem can be applied. 

Instead, we adopt a different approach to deal with the probability measure terms; we consider a \textit{lifting} from the space of probability measure to the space of random variables. That is, for a function $\phi: \R^p \times \Ptwo(\R^p) \to \R^d$, we define a functional $\Phi: \R^p \times \Ltwo_{\F}(\R^p) \to \R^d$ by 
\begin{equation*}
	\Phi(x,\xi) = \phi(x,\mb{P}_\xi)
\end{equation*}
This lifting allows us to work on the Hilbert space $\Ltwo_{\F}(\R^p)$ instead of the metric space $\Ptwo(\R^p)$. This approach was used in \cite{cardaliaguet2010} (see Ch.6) to define a derivative in the Wasserstein space $\Ptwo(\R^p)$ through the \frechet derivative in the Hilbert space $\Ltwo_{\F}(\R^p)$.
For the system \rref{mkfbsde_eq}, we can extend the lifting further and combine all the state variables by defining $B: [0,T] \times \Ltwo_{\F}(\R^p) \to \Ltwo_{\F}(\R^n)$ as 
\begin{equation}\label{lifting}
  B(t,X,Y,Z,\tZ) = b(t,X,Y,Z,\tZ,\mb{P}_{(X, Y, Z , \tZ)|\tilde{\mathcal{F}}_{t}})
\end{equation}
for any random variables $(X,Y,Z,\tZ) \in \Ltwo_{\F}(\R^p)$. We define similarly, $\Sigma,\tSigma,F,G$ the lifting functionals of $\sigma,\tsg,f,g$. 

Using these functionals, FBSDE \rref{mkfbsde_eq} is translated to a functional FBSDE \rref{fbsde_G}. Thus, if $B,(\Sigma,\tSigma),F,G$ defined above satisfy \ref{measurable_bfg}-\ref{lip_mon_bfg}, then we can apply our results from section \ref{sec_fbsde}, namely the Theorems \ref{wellposed} and \ref{estimate_fbsde}, to obtain the well-posedness of \rref{mkfbsde_eq} and its solution estimate. To do so, we assume the following on $b,\sigma,\tsg,f,g$. 

\begin{assumpB}\label{measurable_mkv} $b,\sigma,\tsg,f,g$ are measurable and satisfies 
\begin{equation} 
\int^{T}_{0}  |\phi(t, 0, 0, 0, 0, \delta_{0})|^{2}dt <\infty \quad\text{for }\phi=b,\sigma,\tsg,f,g
\end{equation}
where $\delta_{a}$ denotes the Dirac measure at $a \in \R^p$.
\end{assumpB}

\begin{assumpB}\label{lip_mon_mkv} There exist a constant $K$ and uniformly bounded linear maps
\begin{equation*}
	c^{(1)}_t: \R^n\to \R^k, \quad c^{(2)}_t: \R^{n \times d_1} \to \R^k, \quad c^{(3)}_t: \R^{n \times d_2} \to \R^k
\end{equation*}
such that for any $t \in [0,T]$, $\theta=(X,Y,Z,\tZ)$, $\theta'=(X',Y',Z',\tZ') \in \Ltwo((\h{\Omega}$,  $\h{\F}, \h{\mb{P}});\R^p)$, where $(\h{\Omega},  \h{\F}, \h{\mb{P}})$ is an arbitrary probability space, the following holds:
\begin{enumerate}[(a)]
	\item
	\label{lip_mkv_a}
	For $\phi= b, \sigma,\tsg,f$
	\begin{align*}
		&\h{\mb{E}}| \Delta \phi_t |^{2} \leq K \h{\mb{E}} (|\Delta X|^{2}+ |c^{(1)}_t(\Delta Y)+c^{(2)}_t(\Delta Z) + c^{(3)}_t(\Delta \tZ')|^{2} ), \\
		&\h{\mb{E}}| \Delta g |^2 \leq K  \h{\mb{E}}|\Delta X|^{2}
	\end{align*}

	\item
	\label{mon_mkv_b}
	There exists $\beta > 0$ such that
	\begin{align*}
		&\h{\mb{E}}\l[ \la \D f_t, \D X \ra + \la \D b_t, \D Y \ra +\la \D \sigma_t, \D Z \ra +   \la \D\tsg_t, \D \tilde{Z} \ra \r]\\
		&\quad \leq -\beta\h{\mb{E}}|c^{(1)}_t(\Delta Y)+c^{(2)}_t(\Delta Z) + c^{(3)}_t(\Delta \tZ')|^{2}, \\
		&\h{\mb{E}}[\D g\D X ] \geq 0
	\end{align*}
	where $\D X = X-X'$, $\Delta f_t = f(t,\theta, \h{\mb{P}}_{\theta}) - f(t,\theta', \h{\mb{P}}_{\theta'})$  and similarly for other terms.
\end{enumerate}
\end{assumpB}

It is worth noting the difference between the functionals $B,\Sigma, \tSigma, F, G$ discussed in section \ref{subsec_notation} and those on the functions $b,\sigma,\tsg, f,g$. Here, the functions $b,\sigma,\tsg, f,g$ are all deterministic and their definitions and assumptions \ref{measurable_mkv}-\ref{lip_mon_mkv} do not depend on the probabilistic setup of our model. That is, it is independent of the fixed probability space $(\Omega, \F, \mb{P})$ and/or any filtration, and particularly does not involve the conditional law.

Assumptions \ref{measurable_mkv}-\ref{lip_mon_mkv} are comparable to those in Bensoussan et al. \cite{bensoussan2014mean} where they consider an extension of assumptions (H2.1)-(H2.3) in Peng and Wu \cite{peng1999} to include mean-field terms. In our case, we give a special type of Lipschitz condition involving both $Y,Z$ simultaneously (see \ref{lip_mon_mkv}). This is comparable to assumption (H3.2)-(H3.3) in Peng and Wu \cite{peng1999} where they consider this particular case for its application to stochastic control problems. Similarly, we are interested mainly in its application to a mean-field game model which is a control problem with mean-field interaction. 

For our applications in the subsequent sections, we will state the well-posedness result for FBSDE over the time interval $[s,T]$ and slightly more general filtrations. For that, we define the following 

\begin{definition}
	\label{def_fbsde} 
	Let $s \in [0,T]$, $\xi \in \Ltwo_{\F_s}(\R^n)$ and $\{\G_t\}_{s \leq t \leq T}$ be a sub-filtration of $\{\F_t\}_{s \leq t \leq T}$. 
	We define \textit{FBSDE with data $(s,\xi,\{\G_t\}_{s \leq t \leq T})$} or simply \textit{FBSDE $(s,\xi,\{\G_t\}_{s \leq t \leq T})$} to be the following FBSDE
	\begin{equation}
		\label{fbsde_gen}
		\begin{aligned}
			dX_{t} &= b(t, X_{t}, Y_{t}, Z_{t},\tilde{Z}_{t},\mb{P}_{(X_{t}, Y_{t}, Z_{t} , \tZ_{t})|\G_t})dt\\
			&\quad + \sigma(t, X_{t}, Y_{t}, Z_{t}, \tilde{Z}_{t}, \mb{P}_{(X_{t}, Y_{t}, Z_{t} , \tZ_{t})|\G_t})dW_{t} \\
			&\quad+ \tsg(t, X_{t}, Y_{t}, Z_{t}, \tilde{Z}_{t}, \mb{P}_{(X_{t}, Y_{t}, Z_{t} , \tZ_{t})|\G_t})d\tilde{W}_{t} \\
			dY_{t} &= f(t, X_{t},  Y_{t}, Z_{t}, \tilde{Z}_{t}, \mb{P}_{(X_{t}, Y_{t}, Z_{t} , \tZ_{t})|\G_t})dt + Z_{t}dW_{t} + \tilde{Z}_{t}d\tilde{W}_{t}\\
			X_{s} &= \xi, \quad Y_{T} = g(X_{T}, \mb{P}_{X_{T}|\G_T})
		\end{aligned}
	\end{equation}
$\theta_{t} = (X_t,Y_t,Z_t,\tZ_t)_{s \leq t \leq T}$ is called a solution to FBSDE $(s,\xi,\{\G_t\}_{s \leq t \leq T})$ if they are $\F_t$-adapted and satisfy FBSDE \rref{fbsde_gen}.
\end{definition}

We are interested particularly in the FBSDE $(s,\xi,\{\tF^s_t\}_{s \leq t \leq T})$, where $\xi \in \Ltwo_{\F_s}(\R^n)$ and $\{\tF^s_t\}_{s \leq t \leq T}$ denotes 
the $\sigma$-algebra generated by the common noise starting at time $s$, i.e. $\tF^s_t = \sigma(\tW_r-\tW_s; s \leq r \leq t )$. In section \ref{subsec_decoupling}, 
we will discuss the use of this sub-FBSDE to define the so-called ``decoupling" function for mean-field FBSDE. First, we state our main result for this subsection 
which establishes their existence and uniqueness results.

%%OLD VERSION, GENERAL G-FILTRATION
%\begin{thm}\label{wellposed_mckean} Assume that \ref{measurable_mkv}-\ref{lip_mon_mkv} hold. Let $\{\G_t\}_{s \leq t \leq T}$ be a right-continuous sub-filtration of $\{\F_t\}_{s \leq t \leq T}$ and $B,\Sigma,\tSigma,F,G$ be lifting functionals of $b,\sigma,\tsg,f,g$ as defined in \rref{lifting} but with respect to $\{\G_t\}_{s \leq t \leq T}$, then FBSDE $(s,\xi,\{ \G_t \}_{s \leq t \leq T})$ has a unique solution $(X_{t},Y_{t},Z_{t},\tZ_{t})_{s \leq t \leq T}$ satisfying 
%\begin{equation}
%\begin{aligned}
%&\mb{E}\l[ \Ind_A \sup_{s \leq t \leq T} X_t^2 +  \Ind_A\sup_{s \leq t \leq T} Y_t^2  +  \Ind_A\int_s^T Z_t^2 + \tZ_t^2 dt \r] \\
%	&\quad \leq  C_{K,T} \mb{E}\Big[ \Ind_A \xi^2+ \Ind_Ag(0,\delta_0)^2 + \Ind_A \int_s^T \Big(b(t,0,0,0,\delta_0)^2 + f(t,0,0,0,\delta_0)^2  \\
%	&\quad\qquad  +\sigma(t,0,0,0,\delta_0)^2+\tsg(t,0,0,0,\delta_0)^2  \Big) dt \Big]
%\end{aligned}
%\end{equation}
%where $\delta_a$ denote the Dirac measure at $a \in \R^p$.
%\end{thm}

%\todo{should change f,g to something else?}

\begin{thm}\label{wellposed_mckean} Assume that \ref{measurable_mkv}-\ref{lip_mon_mkv} hold, then FBSDE $(s,\xi,\{ \tF^s_t \}_{s \leq t \leq T})$ admits a unique solution $(X_{t},Y_{t},Z_{t},\tZ_{t})_{s \leq t \leq T}$ satisfying 
\begin{equation}\label{estimate_mkv}
\begin{aligned}
&\mb{E}\l[ \Ind_A \sup_{s \leq t \leq T} |X_t|^2 +  \Ind_A\sup_{s \leq t \leq T} |Y_t|^2  +  \Ind_A\int_s^T (|Z_t|^2 + |\tZ_t|^2) dt \r] \\
	&\quad \leq  C_{K,T} \mb{E}\Big[ \Ind_A |\xi|^2+ \Ind_A|g(0,\delta_0)|^2 + \Ind_A \int_s^T \Big(|b(t,0,0,0,0,\delta_0)|^2 \\
	&\quad\qquad  + |f(t,0,0,0,0,\delta_0)|^2  + \sigma(t,0,0,0,0,\delta_0)^2+\tsg(t,0,0,0,0,\delta_0)^2  \Big) dt \Big]
\end{aligned}
\end{equation}
for $A \in \F_s$, where $\delta_a$ denotes the Dirac measure at $a \in \R^p$. Moreover, for $i=1,2$, let $(X^i_t,Y^i_t,Z^i_t,\tZ^i_t)_{s \leq t \leq T}$ denote the 
solution to FBSDE  $(s,\xi^i,\{ \tF^s_t \}_{s \leq t \leq T})$, where $\xi^i \in \Ltwo_{\F_s}(\R^n)$, then the following estimate holds
\begin{equation}
	\label{estimate_diff_mkv}
	\begin{aligned}
		&\mb{E}\l[ \Ind_A \sup_{s \leq t \leq T} |\D X_t|^2 +  \Ind_A\sup_{s \leq t \leq T} |\D Y_t|^2  +  \Ind_A\int_s^T |\D Z_t|^2 + |\D \tZ_t|^2 dt \r]\\
		&\quad \leq  C_{K,T} \mb{E}\l[ \Ind_A |\D \xi|^2 \r] 
	\end{aligned}
\end{equation}
where $\D X_t = X^1_t-X^2_t$ and $\D Y_t, \D Z_t, \D \tZ_t, \D \xi$ are defined similarly. 
\end{thm}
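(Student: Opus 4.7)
The plan is to reduce the FBSDE~\rref{fbsde_gen} to the functional FBSDE~\rref{fbsde_G} via the lifting in \rref{lifting} and then invoke Theorem~\ref{wellposed}, Corollary~\ref{est_functional}, and Theorem~\ref{estimate_fbsde} with the sub-filtration $\mb{G} = \{\tF^{s}_{t}\}_{s \leq t \leq T}$. Concretely, I would define $B(t,X,Y,Z,\tZ) = b(t,X,Y,Z,\tZ,\mb{P}_{(X,Y,Z,\tZ)|\tF^{s}_{t}})$, and analogously $\Sigma, \tSigma, F$ from $\sigma, \tsg, f$, together with $G(X) = g(X, \mb{P}_{X|\tF^{s}_{T}})$. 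Measurability \ref{measurable_bfg} then follows from the measurability of $b,\sigma,\tsg,f,g$ in \ref{measurable_mkv} combined with the existence of $\tF^{s}_{t}$-progressively measurable versions of the conditional laws via Lemma~1.1 of \cite{kurtz1988}. Since the conditional law of the zero random variable is deterministically $\delta_{0}$, assumption \ref{bd_bfg} is immediate from \ref{measurable_mkv}.

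The key verification is that the lifted functionals satisfy \ref{lip_mon_bfg} with respect to $\{\tF^{s}_{t}\}$. The decisive point is that \ref{lip_mon_mkv} is formulated on an \emph{arbitrary} probability space $(\h\Omega, \h\F, \h{\mb{P}})$, which allows me to specialize it to the regular conditional probability $\mb{P}(\cdot \mid \tF^{s}_{t})(\t\omega)$. This yields the $\t\omega$-pointwise estimate
\begin{equation*}
\mb{E}\l[|\D B_{t}|^{2} \mid \tF^{s}_{t}\r] \leq K \mb{E}\l[|\D X|^{2} + |c^{(1)}_{t}(\D Y)+c^{(2)}_{t}(\D Z)+c^{(3)}_{t}(\D \tZ)|^{2} \mid \tF^{s}_{t}\r]
\end{equation*}
and analogous Lipschitz bounds for $\Sigma, \tSigma, F, G$ as well as the conditional monotonicity inequality; here $(Z,\tZ)$ is treated as a single $\R^{n\times(d_{1}+d_{2})}$-valued variable with $c^{(2)}_{t}, c^{(3)}_{t}$ combined, so as to match the two-slot structure of \ref{lip_mon_bfg}. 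Multiplying by $\Ind_{A}$ with $A \in \tF^{s}_{t}$ (or $\t A \in \tF^{s}_{T}$ for the terminal inequality) and taking full expectation recovers exactly \ref{lip_bfg}--\ref{mon_bfg}.

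Once \ref{measurable_bfg}--\ref{lip_mon_bfg} are in place, Theorem~\ref{wellposed} produces the unique solution $(X_{t},Y_{t},Z_{t},\tZ_{t})_{s \leq t \leq T}$. The a priori bound \rref{estimate_mkv} then follows from Corollary~\ref{est_functional}, noting that $B(t,0,0,0,0) = b(t,0,0,0,0,\delta_{0})$ is deterministic (and likewise for $\Sigma, \tSigma, F, G$ evaluated at zero). For the stability estimate \rref{estimate_diff_mkv}, I would apply Theorem~\ref{estimate_fbsde} with the \emph{same} pair of lifted functionals on both sides, so that $\b B, \b \Sigma, \b F, \b G$ all vanish and the right-hand side of \rref{estimate_diff_bfg} collapses to $C_{K,T}\mb{E}[\Ind_{A}|\D \xi|^{2}]$.

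The main obstacle is the transfer of the Lipschitz and monotonicity properties from the deterministic functions $(b,\sigma,\tsg,f,g)$ to the lifted functionals in \emph{conditional} form with respect to $\tF^{s}_{t}$. Everything else amounts to bookkeeping; this step, however, is exactly why \ref{lip_mon_mkv} was formulated over an arbitrary probability space in the first place, since substituting the regular conditional probability is what manufactures the $\Ind_{A}$-weighted inequalities demanded by \ref{lip_mon_bfg}.
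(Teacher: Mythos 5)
Your proposal follows essentially the same route as the paper's own proof: the same lifting of $(b,\sigma,\tsg,f,g)$ with respect to $\{\tF^s_t\}_{s\leq t\leq T}$, the same verification of \ref{measurable_bfg}--\ref{lip_mon_bfg} by specializing \ref{lip_mon_mkv} to the regular conditional probability given $\tF^s_t$ and then multiplying by $\Ind_A$ with $A\in\tF^s_t$, and the same final appeal to Theorem \ref{wellposed}, Corollary \ref{est_functional}, and Theorem \ref{estimate_fbsde} (applied with identical coefficient pairs so that $\bar{B},\bar{\Sigma},\bar{F},\bar{G}$ vanish in the difference estimate). The only differences are cosmetic: you make explicit the grouping of $(Z,\tZ)$ into a single slot and the use of Kurtz's lemma for progressive measurability, details the paper leaves implicit.
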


\begin{proof} Let $B,\Sigma,\tSigma,F,G$ be lifting functionals of $b,\sigma,\tsg,f,g$ as defined in \rref{lifting} but with respect to $\{\tF^s_t\}_{s \leq t \leq T}$; that is,
\begin{equation*}
	B(t,X,Y,Z,\tZ) = b(t,X,Y,Z,\tZ,\mb{P}_{(X, Y, Z , \tZ)|\tF^s_t})
\end{equation*}
for any random variables $(X,Y,Z,\tZ) \in \Ltwo_{\F}(\R^p)$. The functionals $\Sigma,\tSigma,F,G$ are defined similarly. We need to verify that $B,(\Sigma,\tSigma),F,G$ satisfy \ref{measurable_bfg}-\ref{lip_mon_bfg} with respect to $\{\tF^s_t\}_{0 \leq t \leq T}$, then the result follows directly from Theorem \ref{wellposed} and \ref{estimate_fbsde}.  

Since the map $\H^2([s,T];\R^p) \ni  (\theta_t)_{s \leq t \leq T} \to \mb{P}_{\theta_t | \tF^s_t} \in \Ltwo_{\t{\mb{F}}}([s,T];\Ptwo(\R^p))$ is continuous, assumption \ref{measurable_bfg} follows from \ref{measurable_mkv} and so does \ref{bd_bfg}. Assumption \ref{lip_mon_bfg} follows from \ref{lip_mon_mkv}, that is,
\be
\begin{aligned}
	&\mb{E}\l[ \Ind_A |\Delta B_t| \r] = \mb{E}\l[  \mb{E} \l[ \Ind_A  |\Delta b_t|  | \tF^s_t \r] \r] = \mb{E}\l[  \Ind_A  \mb{E} \l[ |\Delta b_t|  | \tF^s_t \r] \r] \\
	&\quad \leq \mb{E}\l[ \Ind_A K \mb{E}\l[ |\Delta X|^{2}+ |c^{(1)}_t(\Delta Y)+c^{(2)}_t(\Delta Z) + c^{(3)}_t(\Delta \tZ')|^{2} | \tF^s_t \r] \r] \\
	&\quad =  K \mb{E}\l[ \Ind_A\l(  |\Delta X|^{2}+ |c^{(1)}_t(\Delta Y)+c^{(2)}_t(\Delta Z) + c^{(3)}_t(\Delta \tZ')|^{2} \r) \r]
\end{aligned}
\ee
Other conditions in \ref{lip_mon_bfg} follow similarly. 

%%%% EXPLICIT INTEGRAL VERSION
%Assumption \ref{lip_mon_bfg} follows from \ref{lip_mon_mkv} by fixing $\t{\omega} \in \t{\Omega}$ and considering the probability space $(\Omega^0,\F^0,\mb{P}^0)$. That is, for a.s. $\t{\omega} \in \t{\Omega}$, 
%$$ \int_{\Omega^0} \l[ b(t,\theta,\mb{P}_{\theta | \tF^s_t})- b(t,\theta',\mb{P}_{\theta' | \tF^s_t}) \r]d\mb{P}^0(\omega^0) \geq 0  $$
%Thus, 
% $$\mb{E}\l[ \Ind_A \Delta B \r]  = \int_{\t{\Omega}} \Ind_A \int_{\Omega^0} \l[ b(t,\theta,\mb{P}_{\theta | \tF^s_t})- b(t,\theta',\mb{P}_{\theta' | \tF^s_t}) \r]d\mb{P}^0(\omega^0)d\t{\mb{P}}(\t{\omega}) \geq 0 $$
%Other conditions in \ref{lip_mon_bfg} follow similarly. 

%(\Omega^0 \times \t{\Omega},\F^0 \otimes \tF,\mb{P}^0 \otimes \tP)

\end{proof}

\subsection{Decoupling function of a mean-field FBSDE with conditional law}\label{subsec_decoupling}

%One method to solve an FBSDE is to we find what is called a \textit{decoupling field}, a possibly random function describing the relation of the backward process $Y_t$ as a function of the forward process $X_t$. When the coefficients are non-random, this function is deterministic and is a solution to a quasilinear PDE in which case the FBSDE is said to be Markovian and we call the function a \textit{decoupling function}. Using this function, we can decouple an FBSDE and reduce the problem to solving a standard forward SDE. This method of solving FBSDE is called \textit{Four-steps scheme} and was first proposed by Ma, Protter, and Yong in \cite{ma1994solving}. We refer to \cite{delarue2002existence} for more detail on a decoupling function of a classical FBSDE in a deterministic case and \cite{ma2011} for a decoupling field in a general case.

In this section, we discuss the existence of a \textit{decoupling} function for mean-field FBSDE with conditional law. A decoupling function is a function which helps to ``decouple'' the FBSDE by describing the relation of the backward process $Y_t$ as a function of the forward process $X_t$. As a result, it reduce the FBSDE to merely solving a standard forward SDE. This method of solving FBSDE is called \textit{Four-steps scheme} and was first proposed by Ma, Protter, and Yong in \cite{ma1994solving} for a classical FBSDE with non-random coefficients. In that case, under regularity assumptions on the coefficients, one can find a decoupling function by solving a quasilinear PDE. When the coefficients are random, the decoupling function is also random and is referred to as a \textit{decoupling field} and is related to backward stochastic differential equation (BSDE). We refer to \cite{delarue2002existence} for more detail on a decoupling function of a classical FBSDE in a deterministic case and \cite{ma2011} for a decoupling field in a general case. 

Going back to our setting, we consider first the mean-field FBSDE \rref{mkfbsde_eq} with unconditional law; suppose that we fix a \textit{deterministic} flow of probability measure 
$m=(m_t)_{0 \leq t \leq T} \in  \CP{p}$ and consider the system
\begin{equation}
	\label{mkfbsde_eq_fix_m}
	\begin{aligned}
		dX_{t} &= b(t, X_{t}, Y_{t}, Z_{t},\tilde{Z}_{t},m_t)dt + \sigma(t, X_{t}, Y_{t}, Z_{t}, \tilde{Z}_{t}, m_t)dW_{t}\\
		&\quad + \tsg(t, X_{t}, Y_{t}, Z_{t}, \tilde{Z}_{t}, m_t)d\tilde{W}_{t} \\
		dY_{t} &= f(t, X_{t},  Y_{t}, Z_{t}, \tilde{Z}_{t}, m_t)dt + Z_{t}dW_{t} + \tilde{Z}_{t}d\tilde{W}_{t}\\
		X_{0} &= \xi, \quad Y_{T} = g(X_{T}, m^{(n)}_T)
	\end{aligned}
\end{equation}
where $m^{(n)}_T$ denotes the marginal distribution of the first $n$-dimension of $\R^p$ of $m_T$. Since $(b,\sigma,\tsg,f)(t,x,y,z,m_t)$, $g(x,m^{(n)}_T)$ are deterministic functions, we have, under certain standard assumptions, an existence of a decoupling function for classical FBSDEs; that is, there exist a function $U^m:[0,T] \times \R^n$ such that
\begin{equation*}
	Y_t = U^m(t,X_t)
\end{equation*}
See \cite{delarue2002existence} for instance. A Markov property for \rref{mkfbsde_eq_fix_m} would mean $U^m(t,x)$ can be written as $\b{U}(t,x,m_t)$; consequently, going back to the mean-field FBSDE \rref{mkfbsde_eq} (with unconditional law), we have
\begin{equation*}
	Y_t = U^m(t,X_t)  = \b{U}(t,X_t,m_t) = \b{U}(t,X_t,\mb{P}_{(X_t,Y_t,Z_t,\tZ_t)})
\end{equation*} 
In addition, its Markov property also means that the law of the backward processes also depends only on the law of the forward process.  As a result, a decoupling function for mean-field FBSDE \rref{mkfbsde_eq} is expected to be a deterministic function $U:[0,T] \times \R^n \times \Ptwo(\R^n)$ such that  
\begin{equation*}
	Y_t = U(t,X_t,\mb{P}_{X_t})
\end{equation*}
We note that the relation above does not follow directly from results for a classical FBSDE and the derivation above is merely heuristic. The decoupling function for unconditional mean-field FBSDE was discussed in \cite{carmona2013forward,carmona2014master} and shown 
rigorously in \cite{delarue2014classical}. In our case where the law is conditional, the flow $m$ is in fact 
\textit{stochastic} which introduces more difficulties. First, the coefficients (given $m$) $(b,\sigma,\tsg,f)(t,x,y,z,m_t), g(x,m^{(n)}_T)$ are now random, so the 
result from classical FBSDEs does not even apply directly in the first place. Secondly, the conditional law makes it difficult to deal with a time-varying FBSDE used to define the decoupling function.

%\todo{clarify difference to Delarue work, mention section 2.1 in Delarue}

However, as the coefficients in \rref{mkfbsde_eq} are still deterministic functions of $m$, if the system is well-posed, it is reasonable to expect a Markov property with respect to the conditional law of $X_t$; that is, there exist a deterministic function $U:[0,T] \times \R^n \times \Ptwo(\R^n)$ such that
\begin{equation}\label{decoupling_u}
	Y_t = U(t,X_t,\mb{P}_{X_t | \tF_t})
\end{equation}

\begin{remark} 
	In the classical FBSDE, the decoupling function of the FBSDE corresponding to a control problem is the gradient (in state variable $x$) of the value 
	function which is a solution to an HJB equation. Similarly, the decoupling function $U(t,x,m)$ here is the gradient (in $x$) of the \textit{generalized} value 
	function which satisfies the so-called \textit{master equation}. We refer to \cite{ahuja2015mean,bensoussan2014master,carmona2014master} for more detailed 
	discussion on the master equation.
\end{remark}

To state our main result showing the existence of $U$ satisfying  \eqref{decoupling_u}, we first list additional assumptions
\begin{assumpB}\label{onlyxy_mkv}
The functions $b,\sigma,\tsg,f$ in the FBSDE \rref{mkfbsde_eq} depend only on the conditional law of $(X_t,Y_t)$; that is, the FBSDE $(s,\xi,\{\G_t\}_{s \leq t \leq T})$ is now given by 
\begin{equation}
	\label{fbsde_gen_2}
	\begin{aligned}
	dX_{t} &= b(t, X_{t}, Y_{t}, Z_{t},\tilde{Z}_{t},\mb{P}_{(X_{t}, Y_{t})|\G_t})dt 
	+ \sigma(t, X_{t}, Y_{t}, Z_{t}, \tilde{Z}_{t}, \mb{P}_{(X_{t}, Y_{t})|\G_t})dW_{t} \\
	&\quad+ \tsg(t, X_{t}, Y_{t}, Z_{t}, \tilde{Z}_{t}, \mb{P}_{(X_{t}, Y_{t})|\G_t})d\tilde{W}_{t} \\
	dY_{t} &= f(t, X_{t},  Y_{t}, Z_{t}, \tilde{Z}_{t}, \mb{P}_{(X_{t}, Y_{t})|\G_t})dt + Z_{t}dW_{t} + \tilde{Z}_{t}d\tilde{W}_{t}\\
	X_{s} &= \xi, \quad Y_{T} = g(X_{T}, \mb{P}_{X_{T}|\G_T})
\end{aligned}
\end{equation}
\end{assumpB}
\begin{assumpB}\label{linearGrowthOnM_m_given}
For any $m\in\mathscr{P}_{2}(\R^{2n})$, $\t{m} \in \P_2(\R^n) $, $t\in[0,T]$, $\phi = b, \sigma, \t\sigma, f$,
\begin{equation*}
\begin{aligned}
	&\phi(t, 0, 0, 0, 0, m)^{2} \leq  K \l(1 + \int_{\R^{2n}} y^2 dm(y)\r) \\
	& g(0, \t{m})^{2} \leq K \l(1 + \int_{\R^{n}} y^2 d\t{m}(y)\r) 
\end{aligned}
\end{equation*}
\end{assumpB}
\begin{assumpB}\label{lip_m_given}
We have assumptions on both Lipschitz property and monotonicity.
\begin{enumerate}[(a)]
\item
\label{lip_m_given_a}
For $\phi= b, \sigma,\tsg,f$ and $m, m'\in\mathscr{P}_{2}(\R^{2n})$
\begin{align*}
	&| \phi(t, \theta,m) - \phi(t, \theta',m') |^{2}\\
	&\quad \leq K (|\Delta x|^{2}+ |c^{(1)}_t(\Delta y)+c^{(2)}_t(\Delta z) + c^{(3)}_t(\Delta \tz')|^{2} + \W^2_2(m,m'))
\end{align*}
where $\theta=(x,y,z,\tz)$, $\theta'=(x',y',z',\tz') \in \R^p$. We also assume, for $x,x' \in \R^n$, $\t{m},\t{m}' \in \P_2(\R^n)$ ,
\begin{equation*}
	| g(x,\t{m})-g(x',\t{m}') |^2 \leq K  \left(|\Delta x|^{2} + \W^2_2(\t{m},\t{m}')\right)
\end{equation*}

\item
\label{mon_m_given}
For any $m\in\mathscr{P}_{2}(\R^{2n}),\t{m} \in \P_2(\R^n)$, 
\begin{align*}
	&\la\D f_t, \D x \ra + \la \D b_t, \D y \ra + \la \D \sigma_t, \D z \ra +  \la \D \tilde{\sigma}_t, \D \tilde{z} \ra\\
	&\quad \leq  -\beta |c^{(1)}_t(\D y)+c^{(2)}_t(\D z) + c^{(3)}_t(\D \tz) |^2,
\end{align*}
for some $\beta > 0$ and
\begin{equation*}
	\mb{E}[\D g\D x ] \geq 0
\end{equation*}
where $\D f_t = f(t, x, y, z, \tz, m) - f(t, x', y', z', \tz', m)$, and $\D b_{t}$, $\D \sigma_{t}$, $\D \t\sigma_{t}$, $\D g$ are similarly defined.

\end{enumerate}
\end{assumpB}
 
Our main result for this section is the following

\begin{thm}\label{ue_existence} Assume \ref{measurable_mkv}-\ref{lip_m_given} hold and let $(X_t,Y_t,Z_t,\tZ_t)_{0 \leq t \leq T}$ denote the solution to FBSDE \rref{mkfbsde_eq}, then there exists a deterministic function $U: [0,T] \times \R^n \times \Ptwo(\R^n)$ such that
 \begin{equation}\label{decouple_smp} 
 	Y_t = U(t,X_t,\mb{P}_{X_t|\tF_t}), \quad \forall t \in [0,T] \quad \text{a.s.}
\end{equation}
%Moreover, $U$ satisfies the estimates
%\begin{enumerate}
%	\i $| U(t,x,m) - U(t,x',m')| \leq C_{K,T} \l( | x-x'| + \W_2(m,m') +\r)$
%	\i $\l( U(t,x,m)-U(t,x',m)\r)(x-x') \geq 0 $
%\end{enumerate}
%for all $t \in [0,T], x,x' \in \R^n, m,m' \in \Ptwo(\R^n)$ where $C_{K,T}$ depends only on $K,T$.
\end{thm}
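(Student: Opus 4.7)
The plan is to construct $U(t,x,m)$ by solving a restarted FBSDE at time $t$ with an initial condition whose law is $m$, and then to use the uniqueness theory of Theorem \ref{wellposed_mckean} together with the conditional estimates of Section \ref{sec_fbsde} to identify $Y_t$ with $U(t,X_t,\mb{P}_{X_t|\tF_t})$. First I would fix $(t,m) \in [0,T]\times\Ptwo(\R^n)$ and, using that $\Omega^0$ is Polish and atomless, choose $\xi \in \Ltwo(\Omega^0;\R^n)$ with law $m$, independent of the Brownian motions. Theorem \ref{wellposed_mckean} with initial time $s=t$, initial condition $\xi$, and sub-filtration $\{\tF^t_r\}_{t \leq r \leq T}$ produces a unique solution $(X^{t,\xi},Y^{t,\xi},Z^{t,\xi},\tZ^{t,\xi})$ on $[t,T]$. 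The key claim is that $Y^{t,\xi}_t$ is $\sigma(\xi)$-measurable, hence of the form $u^{(t,m)}(\xi)$ for a Borel function $u^{(t,m)}:\R^n\to\R^n$. This holds because the restarted FBSDE on $[t,T]$ involves only $\xi$ and the increments of $(W,\tW)$ after time $t$, so it can be solved in the smaller filtration $\sigma(\xi) \vee \sigma(W_r-W_t,\tW_r-\tW_t: t \leq r \leq \cdot)$; by uniqueness this restricted solution agrees with the original, and at $r=t$ this filtration collapses to $\sigma(\xi)$. I then set $U(t,x,m) := u^{(t,m)}(x)$.

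Next I would verify that $U(t,\cdot,m)$ is unambiguous and has enough regularity in $(x,m)$. Applying the stability bound \eqref{estimate_diff_mkv} with $A = \Omega$ to two choices $\xi,\xi'$ of initial condition yields
\begin{equation*}
\mb{E}|Y^{t,\xi}_t - Y^{t,\xi'}_t|^2 \leq C_{K,T}\,\mb{E}|\xi-\xi'|^2.
\end{equation*}
Combined with Lemma \ref{representation_m_xi}, which allows one to approximate any $L^2$-representative of a measure by any other representative of a nearby measure, this shows that $u^{(t,m)}$ depends only on $m$ (not on the specific $\xi$ realizing it), is Lipschitz in $x$ ($m$-a.e.), and is continuous in $m$ in the Wasserstein sense; joint Borel measurability of $U$ follows.

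The last step is to match the original solution $(X,Y,Z,\tZ)$ of \eqref{mkfbsde_eq} to this construction. Conditional on $\tF_t$, the tail $(X_r,Y_r,Z_r,\tZ_r)_{t \leq r \leq T}$ should solve exactly the FBSDE that defines $U$ with initial datum $X_t$ and law $m_t := \mb{P}_{X_t|\tF_t}$. I would make this rigorous by applying Theorem \ref{estimate_fbsde} on arbitrary conditioning sets $A \in \tF_t$, comparing the original tail with the restarted solution built from $(X_t, m_t)$; the right-hand side of the estimate vanishes on each $A$, and uniqueness then yields $Y_t = U(t,X_t,\mb{P}_{X_t|\tF_t})$ almost surely.

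The main obstacle will be this last identification: the original system carries past randomness through the full common-noise filtration $\tF_t$, whereas the restarted FBSDE defining $U$ is attached to the trivial $\tF^t_t$, so the two solutions live over different filtered spaces. The conditional form of the estimates from Section \ref{sec_fbsde}, localized on sets $A\in\tF_t$, is precisely the device that bridges this mismatch, which is why the functional framework was set up with an arbitrary sub-filtration $\{\G_t\}$ in Assumption \ref{lip_mon_bfg}.
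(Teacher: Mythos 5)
Your overall strategy --- restart the FBSDE at time $t$, factor the initial value of $Y$ through the initial condition, and identify the original solution via conditional estimates --- has the right shape and is close in spirit to the paper's. However, two of your steps rest on tools that do not deliver what you need, and these are exactly the points where the paper has to do real work.

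First, the well-definedness of $u^{(t,m)}$. You argue that the stability bound \eqref{estimate_diff_mkv} together with Lemma \ref{representation_m_xi} shows $u^{(t,m)}$ depends only on $m$. This does not follow: two random variables $\xi,\xi'$ with the same law $m$ can be far apart in $\Ltwo$ (take $\xi$ and $-\xi$ for a symmetric law), so the bound $\mb{E}|Y^{t,\xi}_t - Y^{t,\xi'}_t|^2 \le C\,\mb{E}|\xi-\xi'|^2$ gives nothing, and Lemma \ref{representation_m_xi} only produces \emph{some} nearby representative of a nearby law, not the particular $\xi'$ you must compare against. What is actually needed is a conditional weak-uniqueness (Yamada--Watanabe type) statement: the (conditional) law of the solution depends on $\xi$ only through its law. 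That is precisely the paper's Proposition \ref{weakU}, proved by fixing a path of the common noise and invoking a pathwise-uniqueness-implies-weak-uniqueness argument; without it neither your $U$ nor the paper's flow maps $\Theta^{s,t}$ are well defined. Relatedly, your $u^{(t,m)}$ is defined only $m$-a.e., so the pointwise Lipschitz regularity in $x$ needed at the matching step does not follow from the $\Ltwo$ bound; the paper avoids this by defining $U(t,x,m)$ through a second, \emph{frozen-flow} FBSDE \eqref{mkfbsde_sub}, started from the deterministic point $x$ with the measure flow $(m^{t,m}_r)_{t\le r\le T}$ fed in as an exogenous random coefficient, which makes $U$ pointwise defined and yields the estimates of Lemma \ref{estimate_ue}.

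Second, and more seriously, the final identification. The tail $(X_r,Y_r,Z_r,\tZ_r)_{t\le r\le T}$ of the original solution solves an equation whose coefficients involve $\mb{P}_{\cdot\,|\,\tF_r}$, i.e.\ conditioning on the \emph{full} common-noise history, whereas the restarted FBSDE defining $U$ involves $\mb{P}_{\cdot\,|\,\tF^t_r}$, conditioning only on increments after $t$. Since $X_t$ is correlated with $\tF_t$, these conditional laws genuinely differ, so the two systems have different coefficient functionals attached to different filtrations; Theorem \ref{estimate_fbsde}, whose hypotheses require both sets of functionals to satisfy \ref{measurable_bfg}--\ref{lip_mon_bfg} with respect to one and the same filtration, cannot simply be applied on sets $A\in\tF_t$, and the difference terms $\bar{B},\bar{F},\bar{\Sigma},\bar{G}$ on its right-hand side do not vanish. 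The paper bridges exactly this gap with a discretization argument: approximate $(X_t,\mb{P}_{X_t|\tF_t})$ by piecewise-constant data of the form $\sum_n \Ind_{B_n}\xi^n$ with $B_n\in\tF_t$ and $\xi^n$ \emph{independent} of $\tF_t$, for which conditioning on $\tF_r$ and on $\tF^t_r$ coincide and the indicators pull out of the conditional laws, and then passes to the limit using the Lipschitz estimates and weak uniqueness. This is the content of Theorem \ref{markovian_m} (the flow property $\Theta^{t,u}(\Theta_X^{s,t}(m))=\Theta^{s,u}(m)$) and of the proof of Theorem \ref{markovian_main}; it is the core of the argument rather than a technicality that conditional localization can absorb.
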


Theorem \ref{ue_existence} will be given as a consequence of Theorem \ref{markovian_main} presented at the end of this section. The main idea is to define explicitly 
a function $U$ through a solution of time-varying FBSDE with arbitrary initial data. This is done in two steps as it involves both the state variables and 
probability measure variable. Then, using \textit{a priori} estimates and a discretization argument, we show that this function $U$ satisfies \rref{decouple_smp} 
and, thus, is the decoupling function. The rest of the section are devoted to the proof of Theorem \ref{markovian_main}. We assume 
\ref{measurable_mkv}-\ref{lip_m_given} throughout the rest of this section.

\subsubsection{\Flow}\label{flowmap_subsec} In this section, we define $\flows$ $\{\Theta^{s,t}\}_{0 \leq s \leq t \leq T}$ which describes the conditional law at time $t$ of the solution of mean-field FBSDE over $[s,t]$ as a functional of the initial law. Our main result for this section, Theorem \ref{markovian_m}, gives the Markov property of the solution flows.   

We begin with its definition. For $m \in \Ptwo(\R^n)$, let $\xi \in \Ltwo_{\F_s}(\R^n)$ with law $m$ and denote by $\theta^{s,\xi}_{t} =(X^{s,\xi}_t,Y^{s,\xi}_t,Z^{s,\xi}_t,{\tZ}^{s,\xi}_t)_{s \leq t \leq T}$ the unique solution to FBSDE $(s,\xi,\{\tF^s_t\}_{s \leq t \leq T})$, i.e. it satisfies 
\begin{equation}
	\label{fbsde_st}
	\begin{aligned}
		dX^{s,\xi}_{t} &= b(t, \theta^{s,\xi}_{t}, \mb{P}_{(X^{s, \xi}_t,Y^{s, \xi}_t)|\tF_t^s})dt 
		+ \sigma(t, \theta^{s,\xi}_{t}, \mb{P}_{(X^{s, \xi}_t,Y^{s, \xi}_t)|\tF_t^s})dW_{t}\\
		&\quad + \tsg(t, \theta^{s,\xi}_{t}, \mb{P}_{(X^{s, \xi}_t,Y^{s, \xi}_t)|\tF_t^s})d\tilde{W}_{t} \\
		dY^{s,\xi}_{t} &= f(t, \theta^{s,\xi}_{t}, \mb{P}_{(X^{s, \xi}_t,Y^{s, \xi}_t)|\tF_t^s})dt + Z^{s,\xi}_{t}dW_{t} + \tZ^{s,\xi}_{t}d\tilde{W}_{t}\\
		X^{s,\xi}_{s} &= \xi, \quad Y^{s,\xi}_{T} = g(X^{s,\xi}_{T}, \mb{P}_{X^{s, \xi}_T|\tF_T^s}).
	\end{aligned}
\end{equation}
Recall that $\tF^s_t$ is a $\sigma$-algebra generated by the common Brownian motion starting at time $s$, i.e. $\tF^s_t = \sigma\l(\tW_r-\tW_s; s \leq r \leq t \r)$. We define, for $0 \leq  s\leq t \leq T$, the following two \textit{\flows} $\Theta^{s,t}: \Ptwo(\R^n) \to \Ltwo_{\F^s_t}(\Ptwo(\R^{2n}))$, $\Theta_X^{s,t}:\Ptwo(\R^n) \to \Ltwo_{\F^s_t}(\Ptwo(\R^n))$ as 
\begin{equation}\label{theta}
\begin{aligned}
	\Theta^{s,t}(m)\triangleq \mb{P}_{(X^{s, \xi}_t,Y^{s, \xi}_t)|\tF_t^s}, \quad \Theta_X^{s,t}(m)\triangleq \mb{P}_{X^{s, \xi}_t|\tF_t^s}
\end{aligned}
\end{equation}
%where $\xi \in \Ltwo_{\F_s}(\R^n)$ with $\Law(\xi) = m$. 
We will sometimes use the following notation
\begin{equation}\label{msmt} 
	m^{s,m}_t \triangleq  \Theta^{s,t}(m), \quad m^{s,m}_{X,t} \triangleq  \Theta_X^{s,t}(m)
\end{equation} 

First, we check that this map is well-defined. That is, the conditional law $\mb{P}_{X^{s,\xi} | \tF^s_t}$ is independent of the choice of $\xi \in 
\Ltwo_{\F_s}(\R^n)$ provided that $\mb{P}_{\xi} = m$. This is equivalent to a (conditional) weak uniqueness for FBSDE, or equivalently, the Yamada-Watanabe theorem, 
extended to mean-field FBSDE with conditional law. We state a slightly more general result taking into account the conditional law as it will be applied in a 
subsequent section.

\begin{prop}\label{weakU} Let $0 \leq r \leq s \leq T$. Suppose that $\xi_1,\xi_2 \in \Ltwo_{\F_s}(\R^n)$ such that $\mb{P}_{\xi_1| \tF^r_s}=\mb{P}_{\xi_2| \tF^r_s} \in \Mt{\F_s}$, then $\mb{P}_{(X^{s, \xi_{1}}_t,Y^{s, \xi_{1}}_t)|\tF_t^r} = \mb{P}_{(X^{s, \xi_{2}}_t,Y^{s, \xi_{2}}_t)|\tF_t^r}$, and in particular $\mb{P}_{X^{s,\xi_1}_t| \tF^r_t} = \mb{P}_{X^{s,\xi_2}_t| \tF^r_t}$ for all $t \in [s,T]$ where $(X^{s,\xi_1}_t,Y^{s,\xi_1}_t),(X^{s,\xi_2}_t,Y^{s,\xi_2}_t)$ are as defined above.
\end{prop}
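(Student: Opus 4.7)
The plan is a Yamada--Watanabe style argument: using the pathwise uniqueness from Theorem \ref{wellposed_mckean}, I will represent the solution of \rref{fbsde_st} as a measurable function of the initial condition and of the Brownian paths on $[s,t]$, and then read off the conditional law given $\tF^r_t$ from the independence structure of $\Omega = \Omega^0 \times \t{\Omega}$.

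The key observation is that on $[s,T]$ the coefficients of \rref{fbsde_st} see the initial condition $\xi$ only through its unconditional law $\mb{P}_\xi$: each of $b,\sigma,\tsg,f,g$ depends on $\xi$ via the conditional law $\mb{P}_{(X_u,Y_u)|\tF^s_u}$, which in turn is determined by the joint law of $(\xi, \tW|_{[s,\cdot]})$. Since $\xi \in \Ltwo_{\F_s}(\R^n)$ is independent of the increments of $\tW$ after $s$, this joint law factorises as $\mb{P}_\xi \otimes \mb{P}_{\tW|_{[s,\cdot]}}$. Taking expectations in $\mb{P}_{\xi_1|\tF^r_s} = \mb{P}_{\xi_2|\tF^r_s}$ gives $\mb{P}_{\xi_1} = \mb{P}_{\xi_2}$, so the two FBSDEs carry identical effective random coefficients as $\tF^s$-adapted functionals. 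With these coefficients fixed, \rref{fbsde_st} becomes a standard (non-mean-field) FBSDE whose coefficients depend measurably on $\tW|_{[s,\cdot]}$, and a Yamada--Watanabe argument combined with Theorem \ref{wellposed_mckean} then yields a common measurable functional $\Psi_t$ such that $(X^{s,\xi_i}_t, Y^{s,\xi_i}_t) = \Psi_t(\xi_i, W|_{[s,t]}, \tW|_{[s,t]})$ a.s.\ for both $i=1,2$.

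Given $\Psi_t$, the conclusion is a direct conditional-expectation computation: for a bounded continuous $f:\R^{2n}\to\R$, the expression $\mb{E}[f(X^{s,\xi_i}_t, Y^{s,\xi_i}_t) \mid \tF^r_t]$ equals the integral of $f \circ \Psi_t(\cdot, \cdot, \tW|_{[s,t]})$ against the conditional law of $(\xi_i, W|_{[s,t]})$ given $\tF^r_t = \tF^r_s \vee \tF^s_t$. Since $\xi_i$ is $\F_s$-measurable and $\F_s$ is independent of $\tF^s_t$, the conditional distribution of $\xi_i$ given $\tF^r_t$ reduces to $\mb{P}_{\xi_i|\tF^r_s}$, while $W|_{[s,t]}$ remains an independent Brownian segment and is conditionally independent of $\xi_i$. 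Hence the integral depends on $i$ only through $\mb{P}_{\xi_i|\tF^r_s}$, which is common by hypothesis; this yields the equality of conditional laws, and the marginal statement for $X^{s,\xi_i}_t$ is immediate by projection.

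The hard part will be the functional representation via $\Psi_t$, which requires upgrading strong existence plus pathwise uniqueness to a deterministic measurable functional of the data. One concrete route is to exploit the continuation/Picard iteration scheme inside the proof of Theorem \ref{wellposed}: at each iterate the solution is an explicit measurable function of $(\xi_i, W, \tW)$, and the conditional stability estimate of Theorem \ref{estimate_fbsde} lets one pass to an $L^2$-limit whose measurability is inherited from the approximations.
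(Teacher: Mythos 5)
Your overall plan (a measurable functional representation $\Psi_t$ followed by a conditional-expectation computation) is in the right Yamada--Watanabe spirit, but there is a circularity at the heart of your second paragraph. You assert that the coefficients of \rref{fbsde_st} see the initial condition only through its unconditional law $\mb{P}_\xi$, because the conditional-law flow $u \mapsto \mb{P}_{(X^{s,\xi}_u,Y^{s,\xi}_u)|\tF^s_u}$ ``is determined by the joint law of $(\xi, \tW|_{[s,\cdot]})$''. That determination is precisely the weak-uniqueness statement being proved (it is the case $r=s$ of Proposition \ref{weakU}): a priori, the conditional-law flow is just some $\tF^s$-adapted measure-valued process attached to the solution started from $\xi_i$, and nothing yet guarantees that it coincides for $\xi_1$ and $\xi_2$ even when $\mb{P}_{\xi_1}=\mb{P}_{\xi_2}$. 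Without that, you cannot ``fix the coefficients'' and reduce to a classical FBSDE with a common random driver, so the common functional $\Psi_t$ of your second paragraph is not available. Note that pathwise uniqueness from Theorem \ref{wellposed_mckean} compares two solutions with the \emph{same} initial random variable; by itself it does not identify the conditional-law flows of solutions started from two different random variables with the same law.

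The fix is to let your fourth paragraph carry the real weight, and the issue there is not measurability of the $L^2$-limit but propagation of a law-invariance through the iteration: one must show, by induction along the Picard/continuation scheme, that the conditional law given $\tF^s_u$ of each iterate is a fixed functional of the path of $\tW$ on $[s,u]$ and of $\mb{P}_\xi$ alone (using that $(\xi, W|_{[s,\cdot]})$ is independent of $\tF^s$ with joint law $\mb{P}_\xi \otimes \text{Wiener}$), and that this invariance survives the limit. Once that induction is supplied, your second and third paragraphs do go through; in particular, your reduction of the conditional distribution of $\xi_i$ given $\tF^r_t$ to $\mb{P}_{\xi_i|\tF^r_s}$ is correct, since $\tF^s_t$ is independent of $\sigma(\xi_i)\vee\tF^r_s \subset \F_s$. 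For comparison, the paper's own proof sidesteps all of this machinery: it fixes (conditions on) a path of the common Brownian motion, which freezes the conditional-law flow into a parameter, and then invokes Theorem 5.1 of \cite{antonelli2003weak}, i.e., the known fact that pathwise uniqueness implies weak uniqueness for FBSDEs driven by $W$ alone. Your route, once completed, is more self-contained and makes the conditioning structure explicit, but it is substantially longer and its key step is currently asserted rather than proved.
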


\begin{proof} Fix a path of the common Brownian motion $\tw \in \t{\Omega}$, then follow the same argument as in Theorem 5.1 in \cite{antonelli2003weak} which shows that pathwise uniqueness implies weak uniqueness for an FBSDE.
\end{proof}

Next, we gives a Lipschitz bound on this map.
 
\begin{prop}\label{est_theta_prop}
For $0 \leq s \leq t \leq T$, $m,m' \in \Ptwo(\R^n)$, there exists a constant $C_{K,T}$ that  depends only on $K,T$ such that
\begin{equation}
	\label{est_theta} 
	\begin{aligned}
	\mb{E}\l[ \W^2_2(\Theta^{s,t}(m),\Theta^{s,t}(m')) \r] &\leq C_{K,T}\W^2_2(m,m') \\
	\mb{E}\l[ \W^2_2(\Theta_X^{s,t}(m),\Theta_X^{s,t}(m')) \r] &\leq C_{K,T}\W^2_2(m,m')
	\end{aligned}
\end{equation}
\end{prop}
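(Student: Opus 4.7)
The plan is to combine the stability estimate \eqref{estimate_diff_mkv} from Theorem \ref{wellposed_mckean} with a coupling argument for the conditional laws. The key observation is that if we solve FBSDE $(s,\xi,\{\tF^s_t\})$ and FBSDE $(s,\xi',\{\tF^s_t\})$ on the \emph{same} probability space, then the joint conditional law $\mb{P}_{(X^{s,\xi}_t,Y^{s,\xi}_t,X^{s,\xi'}_t,Y^{s,\xi'}_t)\mid \tF^s_t}$ provides a transport plan (coupling) between $\Theta^{s,t}(m)=\mb{P}_{(X^{s,\xi}_t,Y^{s,\xi}_t)\mid \tF^s_t}$ and $\Theta^{s,t}(m')=\mb{P}_{(X^{s,\xi'}_t,Y^{s,\xi'}_t)\mid \tF^s_t}$, hence
\begin{equation*}
\W_2^2(\Theta^{s,t}(m),\Theta^{s,t}(m')) \leq \mb{E}\l[|X^{s,\xi}_t-X^{s,\xi'}_t|^2 + |Y^{s,\xi}_t-Y^{s,\xi'}_t|^2 \,\big|\, \tF^s_t\r] \quad \text{a.s.}
\end{equation*}

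First, I would fix $\eps>0$ and invoke Lemma \ref{representation_m_xi} on the rich atomless space $(\Omega^0,\F^0,\mb{P}^0)$: starting from any $\xi\in\Ltwo_{\F_s}(\R^n)$ (independent of the Brownian motions) with law $m$, construct $\xi'\in \Ltwo_{\F_s}(\R^n)$ with law $m'$ satisfying $\mb{E}|\xi-\xi'|^2 < \W_2^2(m,m')+\eps$. By Proposition \ref{weakU} (applied with $r=s$, so conditioning on the trivial $\sigma$-algebra $\tF^s_s$), the conditional laws $\Theta^{s,t}(m)$ and $\Theta^{s,t}(m')$ do not depend on the particular representatives $\xi,\xi'$ chosen, so this coupling is legitimate to use.

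Next, I would take full expectation in the coupling inequality above and apply the stability estimate \eqref{estimate_diff_mkv} in Theorem \ref{wellposed_mckean} with $A=\Omega$ to obtain
\begin{equation*}
\mb{E}\l[\W_2^2(\Theta^{s,t}(m),\Theta^{s,t}(m'))\r] \leq \mb{E}|X^{s,\xi}_t-X^{s,\xi'}_t|^2 + \mb{E}|Y^{s,\xi}_t-Y^{s,\xi'}_t|^2 \leq C_{K,T}\,\mb{E}|\xi-\xi'|^2,
\end{equation*}
which is bounded by $C_{K,T}(\W_2^2(m,m')+\eps)$. Letting $\eps\downarrow 0$ yields the first inequality in \eqref{est_theta}. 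The second inequality for $\Theta_X^{s,t}$ follows verbatim: the conditional law of $(X^{s,\xi}_t,X^{s,\xi'}_t)$ given $\tF^s_t$ is a coupling of $\Theta_X^{s,t}(m)$ and $\Theta_X^{s,t}(m')$, giving $\W_2^2(\Theta_X^{s,t}(m),\Theta_X^{s,t}(m')) \leq \mb{E}[|X^{s,\xi}_t-X^{s,\xi'}_t|^2\mid \tF^s_t]$, after which the same stability estimate applies.

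The main delicate point is the coupling inequality itself: one must justify that the random measure $\mb{P}_{(X^{s,\xi}_t,Y^{s,\xi}_t,X^{s,\xi'}_t,Y^{s,\xi'}_t)\mid \tF^s_t}$ really has marginals equal to $\Theta^{s,t}(m)$ and $\Theta^{s,t}(m')$, so that it lies in $\Gamma_{\Theta^{s,t}(m),\Theta^{s,t}(m')}$ from \eqref{wass} almost surely. This is where the Polish/atomless assumption on $\Omega^0$ together with the regular conditional probability construction (as cited via Kurtz--Protter in the definition of $\H^2_{\tilde{\mb{F}}}([0,T];\Ptwo(\R^p))$) is essential. Everything else is a direct application of Theorem \ref{wellposed_mckean} and Lemma \ref{representation_m_xi}.
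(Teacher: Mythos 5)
Your proposal is correct and follows essentially the same route as the paper: both bound $\mb{E}[\W_2^2(\Theta^{s,t}(m),\Theta^{s,t}(m'))]$ via the coupling furnished by the conditional joint law of the two solutions started from representatives $\xi,\xi'$ of $m,m'$, and then invoke the stability estimate \eqref{estimate_diff_mkv} of Theorem \ref{wellposed_mckean}. The only difference is one of explicitness: you spell out, via Lemma \ref{representation_m_xi} and an $\eps$-argument, the passage from $\mb{E}|\xi-\xi'|^2$ to $\W_2^2(m,m')$ (together with the well-definedness from Proposition \ref{weakU}), which the paper compresses into the phrase ``since $\xi,\xi'$ are arbitrary.''
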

\begin{proof} Let $\xi$, $\xi'$ be arbitrary elements of $\Ltwo_{\F_s}(\R^n)$ with law $m$, $m' \in \P_2(\R^n)$. Let $(X_t,Y_t,Z_t,\tZ_t)_{0 \leq t \leq T}$ and $(X'_t,Y'_t,Z'_t,\tZ'_t)_{0 \leq t \leq T}$ denote the solutions of FBSDE \rref{fbsde_st} with initial $X_s=\xi,X'_s=\xi'$, then by the estimate \rref{estimate_diff_mkv}, it follows that
\begin{equation*}
\begin{aligned}
	\mb{E}[\W^{2}_2(\Theta^{s,t}(m),\Theta^{s,t}(m'))] &\leq \mb{E}[(X_t-X'_t)^2] + \mb{E}[(Y_t-Y'_t)^2] \\ 
	&\leq C_{K,T}\mb{E}[(\xi-\xi')^2]
\end{aligned}
\end{equation*}
for a constant $C_{K,T}$ depends only on $K,T$. Since $\xi$, $\xi'$ are arbitrary, we conclude that
\begin{equation*}
	\mb{E}[\W^2_2(\Theta^{s,t}(m),\Theta^{s,t}(m'))] \leq C_{K,T}\W^2_2(m,m')
\end{equation*}
The proof for $\Theta_X^{s,t}$ is identical.
\end{proof}

We are now ready to state and prove our main result for this subsection which gives Markov property of the law of the solution to the conditional mean-field FBSDE \rref{fbsde_st}

\begin{thm}\label{markovian_m} For any $m \in \Ptwo(\R^n)$ and $0 \leq  s \leq t \leq u \leq T$
\begin{equation}\label{consistent}  
\begin{aligned}
\Theta^{t,u}(\Theta_X^{s,t}(m))&=\Theta^{s,u}(m)\\
\Theta_X^{t,u}(\Theta_X^{s,t}(m))&=\Theta_X^{s,u}(m)
\end{aligned}
\end{equation}
\end{thm}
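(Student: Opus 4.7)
My plan is to combine a ``restart'' property of the FBSDE with the conditional weak uniqueness of Proposition \ref{weakU}. First I would fix $\xi\in\Ltwo_{\F_s}(\R^n)$ with law $m$ independent of both Brownian motions, denote by $(X^{s,\xi},Y^{s,\xi},Z^{s,\xi},\tZ^{s,\xi})$ the unique solution of FBSDE $(s,\xi,\{\tF^s_r\}_{s\leq r\leq T})$ given by Theorem \ref{wellposed_mckean}, and set $\eta:=X^{s,\xi}_t$, so that by construction $\mb{P}_{\eta|\tF^s_t}=\Theta_X^{s,t}(m)=m^{s,m}_{X,t}$.

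The restart step establishes that the restriction of this solution to $[t,T]$ is, pathwise in the common-noise variable up to time $t$, the unique solution of FBSDE $(t,\eta,\{\tF^t_r\}_{t\leq r\leq T})$. The key observation is that $\tF^s_r=\tF^s_t\vee\tF^t_r$ for $r\geq t$ and that these two factors, being Brownian increments over the disjoint intervals $[s,t]$ and $[t,r]$, are independent. Therefore, after conditioning on a realization $\tw$ of $\tF^s_t$, the conditional laws $\mb{P}_{(X_r,Y_r)|\tF^s_r}$ entering the coefficients collapse to $\mb{P}_{(X_r,Y_r)|\tF^t_r}$ on that realization, and the terminal condition transforms analogously. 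Applying Theorem \ref{wellposed_mckean} on each such realization identifies the restricted tuple as the unique solution of the FBSDE started at $t$ from initial datum $\eta(\tw,\cdot)$.

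To conclude, I would apply Proposition \ref{weakU} conditionally on each realization $\tw$ of $\tF^s_t$: the datum $\eta(\tw,\cdot)$ has unconditional law $m^{s,m}_{X,t}(\tw)$, so by weak uniqueness the resulting solution produces the conditional law $\Theta^{t,u}(m^{s,m}_{X,t}(\tw))$ at time $u$ given $\tF^t_u$. On the other hand, the same pathwise conditioning on $\tw$ applied to $\Theta^{s,u}(m)=\mb{P}_{(X^{s,\xi}_u,Y^{s,\xi}_u)|\tF^s_u}$ (using again $\tF^s_u=\tF^s_t\vee\tF^t_u$) yields precisely $\mb{P}_{(X^{s,\xi}_u(\tw,\cdot),Y^{s,\xi}_u(\tw,\cdot))|\tF^t_u}$. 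Equating the two on almost every $\tw$ gives the first identity $\Theta^{t,u}(\Theta_X^{s,t}(m))=\Theta^{s,u}(m)$, and marginalizing to the first $n$ coordinates yields the second.

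The main obstacle I anticipate is formalizing the pathwise conditioning argument rigorously --- in particular clarifying how $\Theta^{t,u}$ is to be extended so it accepts the $\tF^s_t$-measurable random argument $m^{s,m}_{X,t}$, and then handling measurability so that the restart property and Proposition \ref{weakU} can be invoked on almost every realization of the common noise on $[s,t]$. The independence of $\tF^s_t$ and $\tF^t_u$ as disjoint-interval Brownian increments is the critical ingredient making both the pathwise interpretation of the FBSDE and the final matching of conditional laws go through.
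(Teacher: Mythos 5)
Your high-level strategy (restart the FBSDE at time $t$ from $\eta = X^{s,\xi}_t$, then invoke the conditional weak uniqueness of Proposition \ref{weakU}) is the same skeleton as the paper's argument, but the step you defer as ``the main obstacle'' --- making sense of $\Theta^{t,u}$ applied to the random measure $m^{s,m}_{X,t}$, and justifying the realization-by-realization use of the restart property and of Proposition \ref{weakU} --- is not a technicality to be filled in later; it is the entire content of the paper's proof, and your sketch does not supply it. Two concrete problems. First, conditioning an FBSDE on a realization $\tw$ of $\tF^s_t$ is conditioning on a null event: one needs regular conditional probabilities together with an argument that the solution tuple (in particular $Z_r,\tZ_r$, which come from martingale representation, and the stochastic integrals against $W$ and $\tW$) remains a solution of the restarted equation on the conditioned space for a.e.\ $\tw$. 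Nothing in the paper, and nothing standard, gives this for free; Proposition \ref{weakU} as stated compares two initial data on the original probability space and does not by itself license a pathwise application. Second, even granting such a restart, the map $\tw \mapsto \Theta^{t,u}\l(m^{s,m}_{X,t}(\tw)\r)$ must be shown to be a well-defined, measurable random measure before the identity \rref{consistent} even parses; this measurability of the composition is not automatic, since $\Theta^{t,u}$ is defined only pointwise on $\Ptwo(\R^n)$.

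The paper resolves both issues simultaneously by discretization instead of pathwise conditioning: it partitions $\Ptwo(\R^n)$ into sets $A_n$ of diameter less than $\delta$, approximates $X^{s,\eta}_t$ by mixtures $\xi^N = \sum_n \Ind_{B_{n,N}}\xi^{n,N}$ in which each $\xi^{n,N}$ has a fixed deterministic law and is \emph{independent} of $\tF^s_t$ (Lemma \ref{representation_m_xi} is what makes these approximations Cauchy in $\Ltwo_{\F_t}$), and solves FBSDE $(t,\xi^{n,N},\{\tF^t_r\}_{t \leq r \leq T})$ for each $n$. Because each piece is independent of $\tF^s_t$ while the indicators $\Ind_{B_{n,N}}$ are $\tF^s_t$-measurable, the conditional law of the patched solution given $\tF^s_u$ is computed \emph{exactly} as $\sum_n \Ind_{B_{n,N}}\Theta^{t,u}(\mb{P}_{\xi^{n,N}})$ --- no conditioning on null events is ever performed --- and the stability estimate \rref{estimate_diff_mkv} together with Proposition \ref{est_theta_prop} lets one pass to the limit $N\to\infty$. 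Proposition \ref{weakU} is then applied once, on the original space with the coarser filtration $\tF^s$ (this is precisely why it was stated with two filtrations $r \leq s$), to identify the limiting law with $\Theta^{s,u}(m)$. To complete your proposal you would need to replace the pathwise-conditioning step with this mixture-and-limit device, or an equivalent measurable-selection argument; as written, the proposal assumes exactly what has to be proved.
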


\begin{proof}  Let $\eta \in \Ltwo_{\F_s}(\R^n)$ with $\mb{P}_{\eta}= m$ and $(X^{s,\eta}_t,Y^{s,\eta}_t,Z^{s,\eta}_t,\tZ^{s,\eta}_t)_{s \leq t \leq T}$ denote the solution to FBSDE \rref{fbsde_st} corresponding to the definition of $\Theta^{s,u}$ and $\Theta_{X}^{s,u}$, so
\begin{equation*}
	\Theta^{s,u}(m) = \mb{P}_{(X^{s,\eta}_u, Y^{s,\eta}_u)  | \tF^s_u}, \qquad \Theta_{X}^{s,u}(m) = \mb{P}_{X^{s,\eta}_u | \tF^s_u}
\end{equation*}
Since $\Ptwo(\R^{n})$ is separable, for any $\delta > 0$, there exist a sequence of disjoint Borel measurable subsets $\{ A_{n} \}_{n \in \mb{N}}$ of $\Ptwo(\R^{n})$ such that $\text{diam}(A_{n}) < \delta$ and $\cup_{n \in \mb{N}} A_n = \Ptwo(\R^{n})$. Let $m_n$ be a representative element of $A_n$ so that $\W_2(m,m_n) < \delta$ for all $m \in A_n$. Let $B_n = \{ \omega \in \Omega; \mb{P}_{X^{s,\eta}_t | \tF^s_t}(\omega) \in A_n \}$. Consider
\begin{equation*}
	\t{\xi} \triangleq \sum_{n \in \mb{N}} \Ind_{B_n}\xi^n
\end{equation*}
where $\xi^n \in \Ltwo_{\F_t}$ has law $m_n$ and is independent of $\tF_t$, thus independent of $B_n$. That is, 
\begin{equation*}
	\mb{P}_{\xi^n | \tF^s_t} = \mb{P}_{\xi^n} = m_n
\end{equation*}
Then it follows by construction that $ \t{\xi}\in \Ltwo_{\F_t}(\R^n)$ and
\begin{equation*}
	\W_2(\mb{P}_{\t{\xi} | \tF^s_t}, \mb{P}_{X^{s,\eta}_t | \tF^s_t}) < \delta
\end{equation*}
Using this type of discretization and Lemma \ref{representation_m_xi}, we can redivide $A_n$ further and proceed sequentially to construct a sequence $\{\xi^N\}_{N \in \mb{N}}$ of the form
\begin{equation*}
	\xi^N \triangleq \sum_{n \in \mb{N}} \Ind_{B_{n,N}}\xi^{n,N}
\end{equation*}
such that $\{\xi^N\}_{N \in\mb{N}}$ is Cauchy in $\Ltwo_{\F_t}$, $\xi^{n,N}$ is independent of $\tF^s_t$ and 
\begin{equation*}
	\W_2(\mb{P}_{\xi^N | \tF^s_t}, \mb{P}_{X^{s,\eta}_t | \tF^s_t}) < \frac{1}{N}
\end{equation*}
Let $ \xi = \lim_{N \to \infty} \xi^N $ in $\Ltwo_{\F_t}$, then we have 
\begin{equation}\label{eq1}
	\mb{P}_{\xi | \tF^s_t} = \mb{P}_{X^{s,\eta}_t | \tF^s_t} = \Theta_{X}^{s,t}(m)
\end{equation}
Now consider FBSDE $(t,\xi,\{\tF^s_r\}_{t \leq r \leq T})$ and denote its solution by $(X^{t,\xi}_r,Y^{t,\xi}_r,Z^{t,\xi}_r,\tZ^{t,\xi}_r)$. By \rref{eq1} and Theorem \ref{weakU}, it follows that
\begin{equation}\label{eq2}
	 \mb{P}_{(X^{t,\xi}_u, Y^{t,\xi}_u )| \tF^s_u} = \mb{P}_{(X^{s,\eta}_u, Y^{s,\eta}_u ) | \tF^s_u} = \Theta^{s,u}(m)
\end{equation}
Let
\begin{equation*}
	X^{N}_r \triangleq \sum_{n \in \mb{N}} \Ind_{B_{n,N}}X^{n,N}_r;\quad Y^{N}_r \triangleq \sum_{n \in \mb{N}} \Ind_{B_{n,N}}Y^{n,N}_r
\end{equation*}
where $(X^{n,N}_r, Y^{n,N}_r)_{t \leq r \leq u}$ is a solution to FBSDE $(t,\xi^{n,N},\{\tF^t_r\}_{t \leq r \leq T})$. It is easy to check that $(X^{N}_r, Y^{N}_r)_{t \leq r \leq u}$ is a solution to FBSDE $(t,\xi^{N},\{\tF^s_r\}_{t \leq r \leq T})$  with initial $\xi^N$. Note that $(X^{n,N}_r, Y^{n,N}_r)$ is $\F^t_r$-measurable which is independent of $\tF_t$, hence independent of $B_n$. Thus, we have
\begin{equation*}
	\mb{P}_{(X^N_u, Y^N_u) | \tF^s_u} = \sum_{n\in \mb{N}} \Ind_{B_{n,N}} \mb{P}_{(X^{n,N}_u, Y^{n,N}_u) | \tF^t_u} = \sum_{n\in \mb{N}} \Ind_{B_{n,N}} \Theta^{t,u}(\mb{P}_{\xi^{n,N}})
\end{equation*}
Taking limit in $\Ltwo_{\F_u}$ as $N \to \infty$ both sides, it follows from the fact that $\mb{E}[(\xi - \xi^N)^2] \to 0$ and from estimate \rref{estimate_diff_mkv} that
\begin{equation*}
	\mb{P}_{(X^{t,\xi}_u, Y^{t,\xi}_u) | \tF^s_u} = \Theta^{t,u}(\mb{P}_{\xi | \tF^s_t})
\end{equation*}
Combine with \rref{eq1} and \rref{eq2}, we get \rref{consistent} as desired. With similar proof, we also have
\begin{equation*}
	\Theta_{X}^{t,u}(\Theta_X^{s,t}(m))=\Theta_{X}^{s,u}(m)
\end{equation*}
\end{proof}

%%%%%%%%%%%%%%%%%%%%%%

\subsubsection{Defining a decoupling function}  Now, we let $\xi \in \Ltwo_{\F_s}(\R^n)$ and define $(X^{s,\xi,m}_t,Y^{s,\xi,m}_t,Z^{s,\xi,m}_t,{\tZ}^{s,\xi,m}_t)_{s \leq t \leq T}$ to be the $\F_t$-adapted solution to the following FBSDE
\begin{equation}\label{mkfbsde_sub}
	\begin{aligned}
& dX^{s,\xi,m}_t = b(t, \theta^{s, \xi, m}_{t}, m^{s,m}_t) dt + \sigma(t, \theta^{s, \xi, m}_{t}, m^{s,m}_t) dW_{t} + \tilde{\sigma}(t, \theta^{s, \xi, m}_{t}, m^{s,m}_t)  \tdW_{t} \\
 &dY^{s,\xi,m}_t  =f(t, \theta^{s, \xi, m}_{t}, m^{s,m}_t) dt + Z^{s,\xi,m}_t dW_{t} + \tZ^{s,\xi,m}_t\tdW_{t} \\
& X^{s,\xi,m}_s = \xi, \quad Y^{s,\xi,m}_T = g(X^{s,\xi,m}_T,m^{s,m}_T) 
	\end{aligned}
\end{equation}

\begin{remark} 
\begin{enumerate}
\item The initial $\xi \in \Ltwo_{\F_s}(\R^n)$ does not necessarily have law $m$. Here, $m \in \Ptwo(\R^n)$ and hence $(m^{s,m}_t)_{s \leq t \leq T}$ are given exogenously.
\item The FBSDE \rref{mkfbsde_sub} is a classical FBSDE with random coefficients and not a mean-field FBSDE since the stochastic law $(m^{s,m}_t)_{s \leq t \leq T}$ in the system is given exogenously.
\item The law $m$ in $m^{s,m}_t$ refers to the law of $X_t$ which is an element in $\Ptwo(\R^n)$ while $\Theta^{s,t}(m)$ or $m^{s,m}_t$ refers to the joint law of $(X_t,Y_t)$, an element in $\Ptwo(\R^{2n})$. 
\end{enumerate}
\end{remark}

The assumptions \ref{linearGrowthOnM_m_given}-\ref{lip_m_given} ensure the existence and uniqueness of the FBSDE above using Theorem \ref{wellposed} similar to our proof for Theorem \ref{wellposed_mckean} with different lifting functionals.

\begin{thm}\label{mkfbsde_sub_wellposed} Assume that \ref{measurable_mkv}-\ref{lip_m_given} hold. The FBSDE in \rref{mkfbsde_sub} has a unique solution $(X_{t},Y_{t},Z_{t},\tZ_{t})_{s \leq t \leq T}$. 
% satisfying 
% \begin{equation}
% \begin{aligned}
% &\mb{E}\l[ \Ind_A \sup_{s \leq t \leq T} |X_t|^2 +  \Ind_A\sup_{s \leq t \leq T} |Y_t|^2  +  \Ind_A\int_s^T |Z_t|^2 + |\tZ_t|^2 dt \r] \\
% 	&\quad \leq  C_{K,T} \mb{E}\Big[ \Ind_A |\xi|^2+ \Ind_A|g(0,\delta_0)|^2 + \Ind_A \int_s^T |\Big(b(t,0,0,0,0,\delta_0)|^2 + |f(t,0,0,0,0,\delta_0)|^2  \\
% 	&\quad\qquad  +|\sigma(t,0,0,0,0,\delta_0)|^2+|\tsg(t,0,0,0,0,\delta_0)|^2  \Big) dt \Big]
% \end{aligned}
% \end{equation}
% where $\delta_a$ denote the Dirac measure at $a \in \R^p$.
\end{thm}
\begin{proof}
We need to verify that the system in \rref{mkfbsde_sub} with \ref{measurable_mkv}-\ref{lip_m_given} satisfies the assumptions \ref{measurable_bfg}-\ref{lip_mon_bfg} where
\begin{equation}
\begin{aligned}
&B(t, X_{t}, Y_{t}, Z_{t}, \tZ_{t}) = b(t, X_{t}, Y_{t}, Z_{t}, \tZ_{t}, m^{s,m}_t)\\
&F(t, X_{t}, Y_{t}, Z_{t}, \tZ_{t}) = f(t, X_{t}, Y_{t}, Z_{t}, \tZ_{t}, m^{s,m}_t)\\
&\Sigma(t, X_{t}, Y_{t}, Z_{t}, \tZ_{t}) = (\sigma(t, X_{t}, Y_{t}, Z_{t}, \tZ_{t}, m^{s,m}_t), \t\sigma(t, X_{t}, Y_{t}, Z_{t}, \tZ_{t}, m^{s,m}_t))
\end{aligned}
\end{equation}
The result then follows from Theorem \ref{wellposed}. Assumption \ref{measurable_bfg} follows from the fact that $b,f,\sigma,\tsg$ is measurable. For \ref{bd_bfg}, by using \ref{linearGrowthOnM_m_given} and \rref{estimate_mkv}, we have
\begin{equation*}
\begin{split}
\mb{E}\int^{T}_{s}|B(t,0,0,0,0)|^{2}dt &=\mb{E}\int^{T}_{s}|b(t,0,0,0,0, m^{s,m}_t)|^{2}dt\\
& \leq \mb{E} \int^{T}_{s}\int_{\R^{2n}} |x|^{2}dm^{s,m}_t(x) dt\\
&=\mb{E} \int^{T}_{s}\mb{E}(|X_{t}^{s, \bar{\xi}}|^{2} + |Y_{t}^{s, \bar{\xi}}|^{2} | \tF^s_t )dt\\
&=\int^{T}_{s}\mb{E}(|X_{t}^{s, \bar{\xi}}|^{2} + |Y_{t}^{s, \bar{\xi}}|^{2} )dt\\
&\leq T(\mb{E}\sup_{s\leq t\leq T}|X_{t}^{s, \bar{\xi}}|^{2} + \mb{E}\sup_{s\leq t\leq T}|Y_{t}^{s, \bar{\xi}}|^{2}   )\\
&<\infty
\end{split}
\end{equation*}
where $(X_{t}^{s, \bar{\xi}}, Y_{t}^{s, \bar{\xi}})$ solves the FBSDE in \rref{fbsde_st} with initial $\bar{\xi} \in \Ltwo_{\F_s}(\R^n)$ such that $\mb{P}_{\bar{\xi}}=m$. Lastly, from \ref{lip_mon_mkv}, the condition \ref{lip_mon_bfg} holds pointwise and thus holds under conditional expectation.

\end{proof}

Note that the initial $\xi$ in \rref{mkfbsde_sub} is arbitrary and does not necessarily have law $m$. When $\xi=x$ is a constant, $Y^{s,x,m}_s$ is deterministic since it is $\F_s^s$-measurable. This fact allows us to define the following map 
\begin{equation}\label{def_ue}
\begin{aligned}
	U: &[0,T] \times \R^n \times \Ptwo(\R^n) \to \R^{n} \\
				&(s,x,m)\mapsto Y^{s,x,m}_s
\end{aligned}
\end{equation}

To summarize how we define $U(s,x,m)$. We begin with the law $m \in \Ptwo(\R^n)$, then solve the \textit{mean-field FBSDE in conditional law} over $[s,T]$ with law $m$ as initial to get the stochastic flow of probability measure $(m^{s,m}_t)_{s \leq t \leq T}$. Then we solve \eqref{mkfbsde_sub} with $(m^{s,m}_t)_{s \leq t \leq T}$ given exogenously and initial $X_s=x$ which is \textit{a classical FBSDE with random coefficients}.   

We will show in Theorem \ref{markovian_main} that this map is indeed our decoupling function. We begin with estimates of the related FBSDEs.  

\begin{prop}\label{diff_ind_prop}
Assume \ref{measurable_mkv}-\ref{lip_m_given}. For $i=1,2$, let $\xi^{(i)} \in \Ltwo_{\F_s}(\R^n)$, $(m^{(i)}_t)_{s \leq t \leq T} \in \MP{s}{T}$, and $(X^{(i)}_t,Y^{(i)}_t,Z^{(i)}_t,\tZ^{(i)}_t)_{s \leq t \leq T}$ denote the solution to FBSDE \rref{mkfbsde_sub} given $m^{(i)}$ and initial $\xi^{(i)}$, then the following estimate holds
\begin{equation}\label{estimate_diff_ind_m}
\begin{aligned}
	&\mb{E}\l[ \sup_{s \leq t \leq T} \Ind_A |\Delta X_t|^2 + \sup_{s \leq t \leq T} \Ind_A |\Delta Y_t|^2  +  \int_s^T \Ind_A [|\Delta Z_t|^2 + |\Delta \tZ_t|^2] dt \r] \\
	&\qquad \leq C_{K,T}\mb{E}[\Ind_A |\Delta\xi|^2 + \Ind_A \int_s^T (\Delta m_t)^2 dt + \Ind_A (\D m^{n}_T)^2 ]
\end{aligned}
\end{equation}
where $m^{n,(i)}_T$ denotes the marginal distribution of $m^{(i)}_T$ in the first $n$ dimension, $\D X_t = X^{(1)}_t-X^{(2)}_t$, $\D m_t = \W_2(m_t^{(1)},m_t^{(2)})$, and $\D Y_t, \D Z_t, \D \tZ_t, \D \xi, \D m^{n}_T$ are defined similarly. 
\end{prop}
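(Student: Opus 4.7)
The plan is to deduce this proposition directly from Theorem \ref{estimate_fbsde} by treating the two FBSDEs indexed by $i=1,2$ as classical (conditional) functional FBSDEs with two \emph{different} sets of coefficients obtained by freezing the exogenous flows $m^{(i)}$. For $i=1,2$ I will introduce the lifting functionals
\begin{equation*}
    \begin{aligned}
    B^{(i)}(t,X,Y,Z,\tZ) &= b(t,X,Y,Z,\tZ,m^{(i)}_t),\\
    F^{(i)}(t,X,Y,Z,\tZ) &= f(t,X,Y,Z,\tZ,m^{(i)}_t),\\
    \Sigma^{(i)}(t,X,Y,Z,\tZ) &= \bigl(\sigma(t,X,Y,Z,\tZ,m^{(i)}_t),\ \tsg(t,X,Y,Z,\tZ,m^{(i)}_t)\bigr),\\
    G^{(i)}(X) &= g\bigl(X,m^{n,(i)}_T\bigr).
    \end{aligned}
\end{equation*}
Exactly as in the proof of Theorem \ref{mkfbsde_sub_wellposed}, the pointwise Lipschitz and monotonicity bounds of \ref{lip_m_given}(a)--(b) transfer to \ref{measurable_bfg}--\ref{lip_mon_bfg} for each lifted system, with the same constant $K$ and with respect to any admissible filtration $\{\G_t\}$ (indicators commute with pointwise inequalities and with conditional expectations).

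Next I apply the stability estimate \rref{estimate_diff_bfg} of Theorem \ref{estimate_fbsde} to the pair $\bigl((B^{(1)},\Sigma^{(1)},F^{(1)},G^{(1)}),(B^{(2)},\Sigma^{(2)},F^{(2)},G^{(2)})\bigr)$ with initial data $\xi^{(1)},\xi^{(2)}$. This gives, for any appropriate $A\in\G_s$,
\begin{equation*}
    \begin{aligned}
        &\mb{E}\!\left[\sup_{s\le t\le T}\Ind_A|\D X_t|^2 + \sup_{s\le t\le T}\Ind_A|\D Y_t|^2 + \int_s^T \Ind_A(|\D Z_t|^2+|\D\tZ_t|^2)\,dt\right]\\
        &\quad\le C_{K,T}\,\mb{E}\!\left[\Ind_A|\D\xi|^2 + \Ind_A|\bar G|^2 + \Ind_A\!\int_s^T(|\bar B_t|^2 + |\bar F_t|^2 + |\bar\Sigma_t|^2)\,dt\right],
    \end{aligned}
\end{equation*}
where the ``bar'' quantities are $\bar\Phi_t = \phi(t,\theta^{(2)}_t, m^{(1)}_t) - \phi(t,\theta^{(2)}_t, m^{(2)}_t)$ for $\phi=b,f$, the analogous combination of $\sigma,\tsg$ for $\bar\Sigma_t$, and $\bar G = g(X^{(2)}_T,m^{n,(1)}_T) - g(X^{(2)}_T,m^{n,(2)}_T)$.

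Finally I close the estimate using the measure-Lipschitz parts of \ref{lip_m_given}(a): pointwise,
\begin{equation*}
    |\bar B_t|^2 + |\bar F_t|^2 + |\bar \Sigma_t|^2 \le C_K\,\W_2^2(m^{(1)}_t, m^{(2)}_t) = C_K (\D m_t)^2,
\end{equation*}
and $|\bar G|^2 \le K\W_2^2(m^{n,(1)}_T, m^{n,(2)}_T) \le K(\D m^{(n)}_T)^2$, where the second inequality uses that projecting a coupling of the joint laws onto the first $n$ coordinates yields a coupling of the marginals. Substituting into the display above gives \rref{estimate_diff_ind_m}. I expect the verification of \ref{measurable_bfg}--\ref{lip_mon_bfg} for the lifted systems to be the only slightly delicate point, and it is essentially the same pointwise-to-functional transfer already carried out in Theorem \ref{mkfbsde_sub_wellposed}; no genuinely new difficulty arises because the exogenous inputs $m^{(i)}_t$ enter only as parameters, so the proof is a direct application of the previously established stability machinery.
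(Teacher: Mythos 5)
Your proposal is correct and follows essentially the same route as the paper's own proof: freeze the exogenous flows $m^{(i)}$ to define the two sets of lifted functionals, invoke the verification already done in Theorem \ref{mkfbsde_sub_wellposed}, apply the stability estimate \rref{estimate_diff_bfg} of Theorem \ref{estimate_fbsde}, and bound the resulting $\bar B_t,\bar F_t,\bar\Sigma_t,\bar G$ terms pointwise via the measure-Lipschitz condition \ref{lip_m_given}(\ref{lip_m_given_a}). The only cosmetic difference is that you spell out $G^{(i)}$ and the marginal-coupling remark explicitly, which the paper leaves implicit.
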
 
\begin{proof}  Let $(B^{(i)},\Sigma^{(i)},F^{(i)},G^{(i)})$, for $i=1,2$, be the functionals defined as
\begin{equation*}
\begin{aligned}
&B^{(i)}(t, X_{t}, Y_{t}, Z_{t}, \tZ_{t}) = b(t, X_{t}, Y_{t}, Z_{t}, \tZ_{t}, m^{(i)}_t)\\
&F^{(i)}(t, X_{t}, Y_{t}, Z_{t}, \tZ_{t}) = f(t, X_{t}, Y_{t}, Z_{t}, \tZ_{t}, m^{(i)}_t)\\
&\Sigma^{(i)}(t, X_{t}, Y_{t}, Z_{t}, \tZ_{t}) = (\sigma(t, X_{t}, Y_{t}, Z_{t}, \tZ_{t}, m^{(i)}_t), \t\sigma(t, X_{t}, Y_{t}, Z_{t}, \tZ_{t}, m^{(i)}_t)) \\
&G^{(i)}(X_T) = g(X_{T}, m^{n,(i)}_{T})
\end{aligned}
\end{equation*}
Then as shown in Theorem \ref{mkfbsde_sub_wellposed}, $(B^{(i)},\Sigma^{(i)},F^{(i)},G^{(i)})$ satisfies \ref{measurable_bfg}-\ref{lip_mon_bfg}. Thus, by estimate \rref{estimate_diff_bfg}, we have
\be\label{bfg_diff}
\begin{aligned}
&\mb{E} \l[ \sup_{s\leq t \leq T} \Ind_{A}|\Delta X_{t}|^{2} + \sup_{s\leq t \leq T} \Ind_{A}|\Delta Y_{t}|^{2} + \int^{T}_{s}\Ind_{A}(|\Delta Z_{t}|^{2}+|\Delta \tZ_{t}|^{2})dt \r]\\ 
&\quad \leq C_{K, T}\mb{E}\l[ \Ind_{A} \l( |\Delta \xi|^{2} +|\bar{G}|^{2} + \int^{T}_{s}(|\bar{F}_{t}|^{2} + |\bar{B}_{t}|^{2} + |\bar{\Sigma}_{t}|^{2} \r) dt \r] 
\end{aligned}
\ee
where $\b{B_t} = (B^{(1)}-B^{(2)})(t,X^{(1)}_t,Y^{(1)}_t,Z^{(1)}_t, \tZ^{(1)}_t)$ and similarly for other terms. Note that by \ref{lip_m_given}\rref{lip_m_given_a}
\begin{equation}\label{bfg_diff_2}
\begin{aligned}
 |\b{B_t}| &= |(B^{(1)}-B^{(2)})(t,X^{(1)}_t,Y^{(1)}_t,Z^{(1)}_t, \tZ^{(1)}_t)| \\
 & = | b(t, X^{(1)}_t,Y^{(1)}_t,Z^{(1)}_t, \tZ^{(1)}_t, m^{(1)}_t) - b(t, X^{(1)}_t,Y^{(1)}_t,Z^{(1)}_t, \tZ^{(1)}_t, m^{(2)}_t)| \\
 & \leq \W_2(m^{(1)}_t,m^{(2)}_t)
 \end{aligned}
 \end{equation}
and similarly for other terms. The estimate \rref{estimate_diff_ind_m} then follows from \rref{bfg_diff} and \rref{bfg_diff_2}.
\end{proof}

To complete the proof of the Markov property of FBSDE \rref{fbsde_gen_2}, we are left to show \rref{decouple_smp}. We first state necessary estimates for $U$. 

\begin{lemma}\label{estimate_ue} Let $U:[0,T] \times \R^n \times \Ptwo(\R^n) \to \R^{n}$ be as defined above, then it satisfies
	\begin{align} &| U(t,x,m) - U(t',x',m')|  \label{convexUe2}\\
	&\qquad \leq C_{K,T} \l(  | x-x'| + \W_2(m,m')+\l(1+|x|+\l(\int_{\R^n}y^2 dm(y)\r)^{\frac{1}{2}} \r)\sqrt{|t-t'|} \r)\nonumber \\
	&\l( U(t,x,m)-U(t,x',m)\r)(x-x') \geq 0 
 \end{align} 
for all $t \in [0,T], x,x' \in \R, m,m' \in \Ptwo(\R^n)$, where $C_{K,T}$ depends only on $K,T$.
\end{lemma}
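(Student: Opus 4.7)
The plan is to prove the three assertions separately: the monotonicity in $x$, the $(x,m)$-Lipschitz estimate at fixed $t$, and the time-Lipschitz estimate at fixed $(x,m)$. The full Lipschitz bound then follows by the triangle inequality.

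For the monotonicity, I would fix $(t,m)$ and let $(X^i,Y^i,Z^i,\tZ^i)$, $i=1,2$, be the solutions of \rref{mkfbsde_sub} on $[t,T]$ with initial conditions $x,x'$ \emph{and the same} conditional flow $(m^{t,m}_r)_{r\ge t}$. Applying Itô's formula to $\la \Delta X_r,\Delta Y_r\ra$ on $[t,T]$ and taking expectations kills the martingale parts and yields
\begin{equation*}
	\mb{E}[\la \Delta X_T,\Delta g\ra]-\la \Delta X_t,\Delta Y_t\ra = \mb{E}\int_t^T\bigl(\la \Delta f,\Delta X\ra+\la \Delta b,\Delta Y\ra+\la \Delta\sigma,\Delta Z\ra+\la \Delta\tsg,\Delta\tZ\ra\bigr)dr.
\end{equation*}
By \ref{lip_m_given}\ref{mon_m_given} the right-hand side is non-positive and $\mb{E}[\la \Delta X_T,\Delta g\ra]\ge 0$; since $\Delta X_t=x-x'$ and $\Delta Y_t=U(t,x,m)-U(t,x',m)$ are both deterministic, this produces the desired monotonicity.

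For the $(x,m)$-Lipschitz estimate, I take $(X^1,Y^1,\ldots)$ solving \rref{mkfbsde_sub} with data $(x,m^{t,m})$ and $(X^2,Y^2,\ldots)$ with data $(x',m^{t,m'})$, and apply Proposition \ref{diff_ind_prop} with $A=\Omega$. This bounds $\mb{E}[\sup_r|Y^1_r-Y^2_r|^2]$ by $|x-x'|^2$ plus $\mb{E}\int_t^T\W_2^2(m^{t,m}_r,m^{t,m'}_r)\,dr$ and $\mb{E}[\W_2^2(m^{t,m,(n)}_T,m^{t,m',(n)}_T)]$. Proposition \ref{est_theta_prop} applied to $\Theta^{t,r}$ and $\Theta_X^{t,r}$ reduces each of these to $C_{K,T}\W_2^2(m,m')$. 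Since $Y^1_t=U(t,x,m)$ and $Y^2_t=U(t,x',m')$ are deterministic, $|U(t,x,m)-U(t,x',m')|^2\le C_{K,T}(|x-x'|^2+\W_2^2(m,m'))$.

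The time-Lipschitz estimate is the main obstacle: the coefficients themselves depend on the initial time through the stochastic flow. For $t<t'$, using that $Y^{t,x,m}_t$ is deterministic so equals $\mb{E}[Y^{t,x,m}_t]$, the forward form of the backward equation gives
\begin{equation*}
	U(t,x,m)-U(t',x,m)=\mb{E}[Y^{t,x,m}_{t'}-Y^{t',x,m}_{t'}]-\mb{E}\int_t^{t'} f(r,\theta^{t,x,m}_r,m^{t,m}_r)\,dr.
\end{equation*}
The drift integral is handled by Cauchy-Schwarz together with the linear growth \ref{linearGrowthOnM_m_given}, the Lipschitz property \ref{lip_m_given}\ref{lip_m_given_a}, and the moment estimate \rref{estimate_mkv}, producing a bound of order $\sqrt{|t'-t|}(1+|x|+(\int y^2\,dm)^{1/2})$. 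For the first term I would invoke the Markov property of Theorem \ref{markovian_m}: on $[t',T]$, $(X^{t,x,m}_r,Y^{t,x,m}_r,\ldots)$ solves \rref{mkfbsde_sub} with initial $X^{t,x,m}_{t'}$ and flow $(\Theta^{t',r}(m^{t,m}_{X,t'}))_{r\ge t'}$, whereas $(X^{t',x,m}_r,\ldots)$ uses initial $x$ and flow $(\Theta^{t',r}(m))_{r\ge t'}$. Proposition \ref{diff_ind_prop} on $[t',T]$ therefore bounds $\mb{E}|Y^{t,x,m}_{t'}-Y^{t',x,m}_{t'}|^2$ by $\mb{E}|X^{t,x,m}_{t'}-x|^2$ plus $\mb{E}\int_{t'}^T\W_2^2(\Theta^{t',r}(m^{t,m}_{X,t'}),\Theta^{t',r}(m))\,dr$ and an analogous terminal term. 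Proposition \ref{est_theta_prop} collapses the measure terms to $\mb{E}\W_2^2(m^{t,m}_{X,t'},m)\le \mb{E}|X^{t,\eta}_{t'}-\eta|^2$ for $\eta\sim m$ independent of $\tF^t$, and a standard forward-SDE continuity estimate together with the moment bound \rref{estimate_mkv} gives $\mb{E}|X^{t,x,m}_{t'}-x|^2+\mb{E}|X^{t,\eta}_{t'}-\eta|^2\le C(t'-t)(1+|x|^2+\int y^2\,dm)$. Taking square roots and combining with the $(x,m)$-Lipschitz bound yields the stated estimate, the subtle point throughout being the passage from the comparison of different-horizon FBSDEs to a common-horizon comparison via the flow-map Markov property.
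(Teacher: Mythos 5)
Your proposal is correct and takes essentially the same approach as the paper: monotonicity via It\^o's formula on $\la \D X_t, \D Y_t\ra$ and \ref{lip_m_given}\rref{mon_m_given}, the $(x,m)$-Lipschitz bound via Propositions \ref{est_theta_prop} and \ref{diff_ind_prop}, and the time regularity by using the flow-map Markov property of Theorem \ref{markovian_m} together with Proposition \ref{est_theta_prop} to reduce everything to a short-time bound on $\mb{E}\l[\W_2^2(\Theta_X^{t,t'}(m),m)\r]$ of order $(t'-t)\l(1+\int_{\R^n}|y|^2dm(y)\r)$. The only divergence is bookkeeping in the time step---you split the increment through the expectation of the backward equation and a direct forward-SDE continuity estimate, whereas the paper zero-extends the shorter-horizon FBSDE on $[t,t')$ and applies Theorem \ref{estimate_fbsde} twice---a cosmetic rather than substantive difference.
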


\begin{proof} From Proposition \ref{est_theta_prop} and Proposition \ref{diff_ind_prop} , we get
\begin{equation}\label{est_u_xm}
 | U(t,x,m)-U(t,x',m') | \leq C_{K,T} \l( | x-x'| + \W_2(m,m') \r)
 \end{equation}
Next, by definition of $U(t,x,m), U(t',x,m)$, we need to consider FBSDE over different time and filtration. We assume $t' \geq t$ and let $(X^{t,x,m}_u$, $Y^{t,x,m}_u$, $Z^{t,x,m}_u$, $\tZ^{t,x,m}_u)_{t \leq u \leq T}$, $(X^{t',x,m}_u,Y^{t',x,m}_u,Z^{t',x,m}_u,\tZ^{t',x,m}_u)_{t' \leq u \leq T}$ denote the solutions to FBSDE \rref{mkfbsde_sub} corresponding to the definition of $U(t,x,m)$ and $U(t',x,m)$ respectively. We can extend the latter to $[t,T]$ by setting the coefficients to $0$ for $s \in [t,t')$ which still satisfy the same assumptions. Thus, by Theorem \ref{estimate_fbsde}, Theorem \ref{markovian_m}, Proposition \ref{est_theta_prop}, we have
%$$ \tilde{\phi}(s,X,Y,Z,Z')=\begin{cases} 0 &,t \leq s\leq t' \\ \phi(s,X,Y,Z,Z',\Theta^{t',s}(m)) &,s \geq t' \end{cases}, \quad \phi=b,f,\sigma,\tsg$$

\be\label{bd_um_1}
\begin{aligned}
&\mb{E}[\sup_{t' \leq u \leq T} (Y^{t',x,m}_u-Y^{t,x,m}_u)^2  ] \\
&\quad \leq C_{K,T} \Big( \int_t^{t'} \mb{E}\l[ (X^{t,x,m}_u)^2+(Y^{t,x,m}_u)^2+(Z^{t,x,m}_u)^2+(\tZ^{t,x,m}_u)^2 \r] du  \\
&\quad\quad  +  \int_{t}^T \W^2_2(\Theta^{t,u}(m),\Theta^{t',u}(m))du   \Big) \\
& \quad \leq C_{K,T}\l( (1+x^2)(t'-t) + \int_{t}^T \mb{E}\l[ \W^2_2(\Theta^{t',u}(\Theta_{X}^{t,t'}(m)),\Theta^{t',u}(m)) \r] du \r) \\
& \quad \leq C_{K,T}\l( (1+x^2)(t'-t) + \int_{t}^T \mb{E}\l[ \W^2_2(\Theta_X^{t,t'}(m),m) \r] du \r) 
\end{aligned}
\ee
 
where $C_{K,T}  > 0$ is a constant which may differ from line to line. Now, consider the mean-field FBSDE $(t,\xi,\{\tF^t_u\}_{t \leq u \leq T})$ with $\mb{P}_\xi=m$ and the same FBSDE but with functional
$$ \Phi(s,X,Y,Z,Z')=\begin{cases} 0 &,t \leq s\leq t' \\ \phi(s,X,Y,Z,Z',\mb{P}_{(X,Y)|\tF^{t'}_s}) &,s \geq t' \end{cases}$$  
for $\Phi=B,\Sigma,F$ and $\phi=b,\sigma,f$ respectively. Denote their solutions by

\begin{equation*}
	(X^{t,m}_u,Y^{t,m}_u,Z^{t,m}_u,{\tZ}^{t,m}_u)_{t\leq u \leq T},\quad
    (X^{t',m}_u,Y^{t',m}_u,Z^{t',m}_u,{\tZ}^{t',m}_u)_{t\leq u \leq T},
\end{equation*}
respectively.  Thus, by Theorem \ref{estimate_fbsde}, assumption \ref{linearGrowthOnM_m_given}, and Proposition \ref{est_theta_prop}, we have

\be\label{bd_um_2}
\begin{aligned}
\mb{E}[(X^{t',m}_u-X^{t,m}_u)^2  ] &\leq C_{K,T} \l( \int_t^{t'} \mb{E}\l[ (X^{t,m}_u)^2+(Y^{t,m}_u)^2+(Z^{t,m}_u)^2+(\tZ^{t,m}_u)^2  \r] du \r)  \\
& \leq C_{K,T}\l( (1+\mb{E}|\xi|^2) \r) (t'-t) 
\end{aligned}
\ee
Therefore,

\be\label{bd_um_3}
\begin{aligned}
\mb{E}\l[ \W^2_2(\Theta_{X}^{t,t'}(m),m) \r] &\leq \mb{E}[(X^{t,m}_{t'}-\xi)^2 ] \\
&  \leq  \mb{E}[(X^{t,m}_{t'}-X^{t',m}_{t'})^2  ] \\
& \leq C_{K,T}\l( (1+\mb{E}|\xi|^2)(t'-t) \r)  \\
& = C_{K,T} \l( 1 + \int_{\R^n} |y|^2 dm(y) \r) (t-t')
\end{aligned}
\ee
Combining \rref{est_u_xm}, \rref{bd_um_1}, and \rref{bd_um_3} yields \eqref{convexUe2} as desired.

Lastly, let $(X_t,Y_t,Z_t,\tZ_t)_{s \leq t \leq T}$ and $(X'_t,Y'_t,Z'_t,\tZ'_t)_{s \leq t \leq T}$ denote the solutions to the FBSDE corresponding to the definition of $U(s,x,m)$ and $U(s,x',m)$ respectively. Note that both FBSDE has the same coefficient functions and only the initials are different. Let $\D X_t = X_t - X'_t$ and define similarly $\D Y_t$, $\D Z_t$, $\D \tZ_t$, $\D b_t$, $\D f_t$, $\D \sigma_t$,  $\D \t\sigma_t$, $\D g$. Applying \ito's lemma to $\la \D X_t,\D Y_t \ra$ and using \ref{lip_m_given}\rref{mon_m_given} yields

\begin{align*}
&\mb{E}\la \D Y_s,\D X_s \ra = \mb{E}\la \D g, \D X_T \ra\\ 
&\quad -\mb{E} \int_s^T \l( \la \D f_{t}, \D X_t \ra + \la \D b_{t}, \D Y_t \ra + \la \D \sigma_{t} ,\D Z_{t} \ra  + \la \D \t\sigma_{t} \D \tZ_{t} \ra \r)  dt
\geq 0
\end{align*}
By definition of $U$ and the fact that it is deterministic, we deduce that

\begin{equation*}
	(U(s,x,m)-U(s,x',m))(x-x') \geq 0
\end{equation*}

\end{proof}

Now we are ready to state and prove the existence of a deterministic decoupling function thereby establishing the Markov result. Using Theorem \ref{markovian_m} above, we can show \rref{decouple_smp} using a similar argument as was done for a classical FBSDE (see Corollary 1.5 in \cite{delarue2002existence} for instance).

\begin{thm} \label{markovian_main}  Let $s \in [0,T], m \in \Ptwo(\R^n), \xi \in \Ltwo_{\F_s}(\R^n)$, consider $(m^{s,m}_t)_{s \leq t \leq T}$ and $(X^{s,\xi,m}_t,Y^{s,\xi,m}_t,Z^{s,\xi,m}_t,{\tZ}^{s,\xi,m}_t)_{s \leq t \leq T}$ as defined above, then it follows that
\begin{equation}\label{decouple_smp_2}
Y^{s,\xi,m}_t = U(t, X^{s,\xi,m}_t, m^{s,m}_{X,t}), \quad \forall t \in [s,T] \text{ a.s.}
\end{equation}
\end{thm}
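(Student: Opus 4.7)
My plan is to establish \eqref{decouple_smp_2} first at a single time $t\in[s,T]$ and for deterministic initial condition $\xi=x$, then bootstrap to general $\xi\in\Ltwo_{\F_s}(\R^n)$ by discretization, and finally upgrade the pointwise-in-$t$ statement to ``for all $t$'' via path continuity.

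\textbf{Constant initial condition.} Fix $x\in\R^n$, $t\in[s,T]$, and consider the restriction of $(X^{s,x,m}_u,Y^{s,x,m}_u,Z^{s,x,m}_u,\tZ^{s,x,m}_u)_{u\in[t,T]}$ to $[t,T]$. By construction this restriction solves FBSDE \eqref{mkfbsde_sub} on $[t,T]$ with random initial condition $X_t = X^{s,x,m}_t$ and exogenous flow $(m^{s,m}_u)_{u\in[t,T]}$. Now, by Theorem \ref{markovian_m}, for every $u\in[t,T]$ we have $m^{s,m}_u = \Theta^{s,u}(m) = \Theta^{t,u}(\Theta_X^{s,t}(m)) = \Theta^{t,u}(m^{s,m}_{X,t})$, which is precisely the exogenous flow that appears in the definition of $U(t,\cdot,m^{s,m}_{X,t})$. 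Thus the classical FBSDE with random coefficients used to define $U(t,X^{s,x,m}_t,m^{s,m}_{X,t})$ coincides, along each common-noise path and with the correct initial data, with the restricted FBSDE above. Applying Theorem \ref{mkfbsde_sub_wellposed} (uniqueness) and the deterministic construction in \eqref{def_ue} pathwise, we conclude $Y^{s,x,m}_t = U(t,X^{s,x,m}_t,m^{s,m}_{X,t})$ a.s.

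\textbf{Simple and then general initial condition.} Let $\xi^N = \sum_{n\in\mb{N}} \Ind_{B_n^N} x_n^N$ with $(B_n^N)_n$ a measurable partition of $\Omega$ in $\F_s$ and $x_n^N\in\R^n$ constants, chosen so $\xi^N\to\xi$ in $\Ltwo_{\F_s}(\R^n)$. For each $N$, on the event $B_n^N$ the FBSDE \eqref{mkfbsde_sub} with initial $\xi^N$ and exogenous flow $(m^{s,m}_u)_{u\in[s,T]}$ coincides with the same FBSDE started from the constant $x_n^N$, because the flow $(m^{s,m}_u)$ does not depend on the initial condition and pathwise uniqueness applies fiberwise. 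Hence $X^{s,\xi^N,m}_t = \sum_n \Ind_{B_n^N} X^{s,x_n^N,m}_t$ and similarly for $Y$, so by the constant case and measurability of $U$,
\begin{equation*}
Y^{s,\xi^N,m}_t = \sum_{n} \Ind_{B_n^N} U(t, X^{s,x_n^N,m}_t, m^{s,m}_{X,t}) = U(t, X^{s,\xi^N,m}_t, m^{s,m}_{X,t}).
\end{equation*}
The stability estimate of Theorem \ref{wellposed_mckean} (in the stronger form of Proposition \ref{diff_ind_prop}) gives $\mb{E}|Y^{s,\xi^N,m}_t-Y^{s,\xi,m}_t|^2\to 0$ and $\mb{E}|X^{s,\xi^N,m}_t-X^{s,\xi,m}_t|^2\to 0$, while Lemma \ref{estimate_ue} gives the Lipschitz continuity of $U$ in $x$; combining these yields \eqref{decouple_smp_2} at time $t$ for general $\xi$.

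\textbf{Uniformity in $t$.} The exceptional null set in the previous step depends on $t$. To upgrade to a statement holding for all $t\in[s,T]$ simultaneously almost surely, I use that $(X^{s,\xi,m}_\cdot, Y^{s,\xi,m}_\cdot)$ has continuous paths, that $(m^{s,m}_{X,t})_{t\in[s,T]}$ is continuous in $\Wtwo$ (as a consequence of Proposition \ref{est_theta_prop} together with the continuity of the flow, which can be derived from the same FBSDE estimate used to prove \eqref{bd_um_3}), and that $U$ is jointly continuous by Lemma \ref{estimate_ue}. Applying \eqref{decouple_smp_2} along a countable dense set $\{t_k\}\subset[s,T]$ and passing to the limit on each side gives the conclusion for all $t$, outside a single null set.

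\textbf{Main obstacle.} The delicate point is the constant-initial-condition step: one must justify that the stochastic exogenous flow $(m^{s,m}_u)_{u\in[t,T]}$, which is $\tF^s_u$-measurable, is indistinguishable from the flow $\Theta^{t,u}(m^{s,m}_{X,t})$ produced from the $\tF^s_t$-measurable random measure $m^{s,m}_{X,t}$ through a map $\Theta^{t,\cdot}$ defined on deterministic initial laws. This is exactly the content of Theorem \ref{markovian_m}, whose proof via discretization and Lemma \ref{representation_m_xi} provides a meaningful interpretation of $\Theta^{t,u}$ evaluated at a random measure and identifies it with $\Theta^{s,u}(m)$; this identification is what makes the two FBSDEs whose solutions we are comparing genuinely the same system.
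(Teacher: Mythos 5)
Your skeleton (discretization, Lipschitz continuity of $U$ plus $\Ltwo$-stability of the FBSDE, then path continuity to get all $t$ at once) is the same as the paper's, and your final continuity-in-$t$ step is fine. But your first step contains a genuine gap, and it sits exactly where the whole difficulty of the theorem lies. You claim that for constant $\xi=x$ the identity $Y^{s,x,m}_t = U(t,X^{s,x,m}_t,m^{s,m}_{X,t})$ follows from Theorem \ref{markovian_m} together with ``applying Theorem \ref{mkfbsde_sub_wellposed} (uniqueness) and the deterministic construction in \rref{def_ue} pathwise.'' Theorem \ref{markovian_m} does give $m^{s,m}_u=\Theta^{t,u}(m^{s,m}_{X,t})$ for $u\in[t,T]$, so the restricted process solves the right equation; the problem is the other side of the identity. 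The map $U(t,x',m')$ is defined in \rref{def_ue} by solving FBSDE \rref{mkfbsde_sub} with a \emph{deterministic} initial point $x'$ and a \emph{deterministic} measure $m'$, and the quantity $U(t,X^{s,x,m}_t(\omega),m^{s,m}_{X,t}(\omega))$ is that deterministic function evaluated at random arguments; it is not the time-$t$ value of any single FBSDE solution, so there is nothing to which a uniqueness theorem can be applied. Conditioning on the common noise does not rescue this: it freezes $m^{s,m}_{X,t}$ (which is $\tF^s_t$-measurable) but not $X^{s,x,m}_t$, which depends on the individual noise up to $t$ even when $\xi=x$ is constant; moreover an FBSDE solution is defined through expectations over the future individual \emph{and} common noise, so ``the two FBSDEs coincide along each common-noise path'' is not a statement about solutions. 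Identifying the solution started from the random variable $X^{s,x,m}_t$ with $U$ evaluated at $(X^{s,x,m}_t,m^{s,m}_{X,t})$ is precisely the substitution (Markov) property the theorem asserts, so it cannot be waved through by uniqueness.

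The repair is the paper's argument, and it also shows that your later steps discretize at the wrong time point. The discretization must be performed at time $t$, jointly on the pair $(X^{s,\xi,m}_t,m^{s,m}_t)$ in the separable space $\R^n\times\Ptwo(\R^n)$: take disjoint cells $A_n$ of diameter at most $\delta$ with representatives $(x_n,m_n)$, set $B_n$ as in \rref{Bn}, and on each $B_n$ compare (i) $U(t,X^{s,\xi,m}_t,m^{s,m}_t)$ with $U(t,x_n,m_n)$ via the Lipschitz bound of Lemma \ref{estimate_ue}, and (ii) $Y^{s,\xi,m}_t$ with $Y^{t,x_n,m_n}_t=U(t,x_n,m_n)$ via the stability estimates, i.e. Proposition \ref{est_theta_prop} and Proposition \ref{diff_ind_prop}; it is exactly the conditional (indicator) form $\Ind_{B_n}$ of Theorem \ref{estimate_fbsde} that legitimizes this cell-by-cell comparison, which is why the paper set up its estimates conditionally in the first place. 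Summing over $n$ gives $\mb{E}\l[\l(U(t,X^{s,\xi,m}_t,m^{s,m}_t)-Y^{s,\xi,m}_t\r)^2\r]\leq C\delta^2$, and letting $\delta\to0$ concludes at fixed $t$. Note this handles general $\xi\in\Ltwo_{\F_s}(\R^n)$ directly, so your intermediate simple-function approximation of $\xi$ at time $s$ is both unnecessary and insufficient: even for constant $\xi=x$ the pair $(X^{s,x,m}_t,m^{s,m}_{X,t})$ is genuinely random at time $t$, so your steps 2 and 3 never remove the gap in step 1.
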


\begin{remark} \rref{decouple_smp} in Theorem \ref{ue_existence} follows from \rref{decouple_smp_2} by setting $s=0$ and $\mb{P}_{\xi}=m$.
\end{remark}
\begin{proof}

 We will use a similar argument as in the proof of Theorem \ref{markovian_m} which is based on a discretization argument and global Lipschitz property. Note that $\R^{n} \times \Ptwo(\R^n)$ is separable, hence there exists a countable disjoint set $\{ A_n \}_{n \in \mb{N}}$ such that $\bigcup_{n=1}^\infty A_n = \R^{n} \times \Ptwo(\R^n)$ and $\text{diam}(A_n) < \delta$. Let $(x_n,m_n) \in \R^{n} \times \Ptwo(\R^n)$ be a fixed element of $A_n$, then let 
\begin{equation}\label{Bn}
	B_n = \{ \omega \in \Omega ; (X^{s,\xi,m}_t, m^{s,m}_{X,t}) \in A_n \}
\end{equation}
Then by Lemma \ref{estimate_ue}, we have
\begin{equation}\label{Ue1}
\sum_{n \in \mb{N}}| U(t,X^{s,\xi,m}_t,m^{s,m}_{X,t})-U(t,x_n,m_n) |^2 \Ind_{B_n}  \leq C_1\delta^2
\end{equation}
On the other hands, using Theorem \ref{markovian_m}, it follows that $(X^{s,\xi,m}_r,Y^{s,\xi,m}_r,Z^{s,\xi,m}_r,\tZ^{s,\xi,m}_r)_{t \leq r \leq T}$ satisfies the FBSDE

\begin{equation*}\label{fbsde3}
\begin{aligned}
	dX_r &= b(r,X_r,Y_r,Z_r,\tZ_r,\Theta^{t,r}(m^{s,m}_t))dr + \sigma(r,X_r,Y_r,Z_r,\tZ_r,\Theta^{t,r}(m^{s,m}_t)) dW_r \\
	&\qquad+ \tsg(r,X_r,Y_r,Z_r,\tZ_r,\Theta^{t,r}(m^{s,m}_t)) d\tW_r \\
	dY_r &= f(r,X_r,Y_r,Z_r,\tZ_r,\Theta^{t,r}(m^{s,m}_t))dt+Z_r dW_r + \tZ_r d\tW_r \\
	X_t &= X^{s,\xi,m}_t, \quad Y_T = g(X_T, \Theta_X^{t,T}(m^{s,m}_t))
\end{aligned}
\end{equation*}
Thus, we get by Proposition \ref{est_theta_prop}, Proposition \ref{diff_ind_prop}, and \rref{Bn} that
\begin{equation}\label{Y1}
\sum_{n \in \mb{N}}\mb{E}\l[ (Y^{s,\xi,m}_t - Y^{t,x_n,m_n}_t)^2\Ind_{B_n} \r] \leq  C_2\delta^2
\end{equation}
Combining \rref{Ue1} and \rref{Y1}, it follows that
\begin{equation*}
	\mb{E}\l[ \l( U(t,X^{s,\xi,m}_t,m^{s,m}_{X,t}) - Y^{s,\xi,m}_t \r)^2 \r] \leq C_3\delta^2
\end{equation*}
Since $\delta$ is arbitrary, \rref{decouple_smp_2} holds a.s. for each $t \in [0,T]$. Then by continuity in $t$ of $U$ and the fact that $(X^{t,\xi,m}_s,Y^{t,\xi,m}_s)_{t \leq s \leq T}$ have continuous trajectories, we have \rref{decouple_smp_2} as desired. 

\end{proof}

% !TEX root = ../main.tex

\section{Mean-field games with common noise}
\label{sec_mfg}

%%%%%%%%%%%%%%%%%
%MOVE FROM INTRODUCTION 
%%%%%%%%%%%%%%%%%%

A mean-field game (MFG) is a system of differential equations to describe the evolution of the distribution of the players when each player maximizes its own utility 
and there are infinitely many players in the game.  The original framework are provided by Lasry and Lion \cite{cardaliaguet2010,Lasry2007} and its wellposedness 
are proved by the PDE approach.  Because of the nature of the problem, the probabilistic approach (for example, see \cite{carmona2013probabilistic}) quickly becomes 
a popular approach in the MFG community after Lasry and Lion's original work.  In the probabilistic approach, a mean-field game is modeled as a 
system of FBSDEs where the forward SDE describes the evolution of the system and the backward SDE determines the individuals' optimal control.  Because the system 
evolution and the optimal control affect each other, the forward and backward SDEs are fully coupled in general. 

The original MFG framework and largely the following literature assume that all the individuals' uncertainties/noises are independent; in other words, there is no 
common noise allowed in the system.  The independence assumption is required mainly due to the mathematical tractability; with a common noise, the PDE approach would 
lead to a system of forward-backward stochastic PDEs and many crucial techniques can not be applied in the presence of a common noise.  For the MFG with common 
noise, the probabilistic approach becomes a feasible method because with a common noise, the forward and backward SDEs would be coupled through the law of the 
solution conditional on the filtration of the common noise, and to provide the wellposedness result is still possible even in this case.  While there is a relatively 
small amount of the literature, the MFG with common noise has gained interest due to its applications in economics and financial modeling.  We refer readers to 
\cite{carmona2014commonnoise, Lacker2015, ahuja2015mean} for the theoretical analysis of MFGs with common noise and \cite{carmona2013mean} for the example of an 
application.

%%%%%%%%%%%%%%%%%%%%%%
% END
%%%%%%%%%%%%%%%%%%%%%

In this section, we consider a mean-field game (MFG) model in the presence of common noise. By applying results from section \ref{sec_mkvfbsde}, we establish existence and uniqueness of this class of models under linear-convex setting and weak monotone cost functions. In addition, we show that the solution to MFG with common noise is Markovian as a consequence of the existence of a decoupling function discussed in section \ref{subsec_decoupling}. 

%The material is this section is an extension of \cite{ahuja2014}. We would like to highlight several differences. First, we 
%extend the state process and running cost models to a more general linear-convex setting. Second, we provide an alternative and simpler proof. In \cite{ahuja2014}, 
%the existence and uniqueness was shown by proving the result over a small time interval and extend the solution. Here, we view the corresponding mean-field FBSDE 
%under a more general monotone functional set up whose existence and uniqueness proof is readily available from the classical FBSDE. Lastly, we prove a Markovian 
%property of the optimal control which was not discussed in \cite{ahuja2014} 

% \add{
% \begin{remark} An existence of a progressively-measurable version of $(\mb{P}_{X^\alpha_t | \tF_t})_{0 \leq t \leq T}$ is guaranteed by Lemma 1.1 in \cite{kurtz1988} for instance. In fact, in this case there exist a continuous version by the Kolmogorov continuity theorem. 
% \end{remark}}
\subsection{Problem Formulation}\label{subsec_mfg_formulation} Mean-field games (MFG) with common noise can be described in succinct form as follow; 
\begin{equation}\label{mfg_formulation}\begin{cases}
    \alpha^* \in \arg\max_{\alpha \in \Htwo{k}} \mb{E} \l[ \int_0^T f(t,X^\alpha_t,m_t,\alpha_t)dt + g(X^\alpha_T,m_T)\r]\\
    dX^\alpha_t = b(t,X^\alpha_t,m_t,\alpha_t)dt + \sigma(t,X^\alpha_t,m_t,\alpha_t)dW_t +\tsg(t,X^\alpha_t,m_t,\alpha_t)d\tW_t  \\
    m_t = \mb{P}_{X^{\alpha^*_t} | \tF_t}, \quad \tF_t = \sigma(\tW_s; 0 \leq s \leq t) 
  \end{cases}
\end{equation}
where the set up and notations are as defined in section \ref{sec_mkvfbsde} with the following measurable functions being given;
\begin{align*}
	&b: [0,T] \times \R^n \times \Ptwo(\R^n) \times \R^k \to \R^n,\quad \sigma: [0,T] \times \R^n \times \Ptwo(\R^n) \times \R^k \to \R^{n \times d_1} \\
	&\tsg : [0,T] \times \R^n \times \Ptwo(\R^n) \times \R^k \to \R^{n \times d_2},\quad f: [0,T] \times \R^n \times \Ptwo(\R^n) \times \R^k \to \R^n,\\
	&g: \R^n \times \Ptwo(\R^n) \to \R^n.
\end{align*}
To simplify the notations, we assume that $d_1=d_2=1$ although the result in this section still hold for any $d_1,d_2 > 0$. For convenience, we will refer to MFG with common noise ($\tsg \not\equiv 0$) as $\textit{\cMFG}$ and MFG without common noise ($\tsg \equiv 0$) as  $\textit{\oMFG}$ to emphasize the existence/non-existence of the common noise.
MFG is formulated as a heuristic limit of an $N$-player stochastic differential game: for $i=1,\ldots,N$
\begin{equation}\label{Nplayer_formulation}\begin{cases}
    \alpha^{i} \in \arg\max_{\alpha \in \Htwo{k}} \mb{E} \l[ \int_0^T f(t,X^{\alpha,i}_t,m^N_t,\alpha_t)dt + g(X^{\alpha,i}_T,m^N_T)\r],\\
    dX^{\alpha^i,i}_t = b(t,X^{\alpha^i,i}_t,m^N_t,\alpha^i_t)dt + \sigma(t,X^{\alpha^i,i}_t,m^N_t,\alpha^i_t)dW^i_t +\tsg(t,X^{\alpha^i,i}_t,m^N_t,\alpha^i_t)d\tW_t  \\
    m^N_t = \frac{1}{N} \sum_{i=1}^N \delta_{X^{\alpha^i,i}_t} 
  \end{cases}\end{equation}
where $\delta_a$ denotes the Dirac measure at $a \in \R^n$. We emphasize the main features of this $N$-player game which are essential to the formulation of MFG. First, the cost functions are identical for all other players as a function of his/her state, control, and other players' states. Second, the dependence on other players' states is only through the empirical distribution of all states, or equivalently, the interaction between players is only of a mean-field type. Lastly, the random noise in the players' state process consists of an independent component $W^i_t$ (individual noise) and a common random factor $\tW_t$ shared among all the players (common noise), all of which are mutually independent. 

Under these symmetric properties, the optimization problem is identical in the perspective of each players. Thus, when $N$ is large, we can replace the empirical distribution with the law of a single player and only consider a control problem of this \textit{representative player}. This single player optimization problem involving the law is precisely the MFG problem \rref{mfg_formulation}. It is important to note that this formulation of MFG via taking the limit as $N \to \infty$ is heuristic and the convergence or the relation between a solution to MFG and the finite player counterpart require non-trivial justifications. However, the topic is beyond the scope of this paper. We refer interested readers to \cite{bardi2014,fischer2014,gomes2012cont,lacker2014general}.

The MFG problem \rref{mfg_formulation} can also be viewed as a fixed point problem; Given a strategy $\bal \in \Htwo{k}$, we set $\bar{m}_t = \mb{P}_{X^{\bal}_t | \tF_t}$, then solve an \textit{individual control problem given $\bar{m}$};
\begin{equation}\label{ind_formulation}\begin{cases}
    \alpha^* \in \arg\max_{\alpha \in \Htwo{k}} \mb{E} \l[ \int_0^T f(t,X^\alpha_t,\bar{m}_t,\alpha_t)dt + g(X^\alpha_T,\bar{m}_T)\r]\\
    dX^\alpha_t = b(t,X^\alpha_t,\b{m}_t,\alpha_t)dt + \sigma(t,X^\alpha_t,\b{m}_t,\alpha_t)dW_t +\tsg(t,X^\alpha_t,\b{m}_t,\alpha_t)d\tW_t  \\
     \end{cases}\end{equation}
This step yields a new optimal control $\alpha^*$. It is clear from \rref{mfg_formulation} that the fixed point of this process gives the solution to MFG.

\subsection{Assumptions}\label{subsec_mfg_assumption} We now state the main assumptions on the model and cost functions. The first set of assumptions is essential for ensuring that given any stochastic flow of probability measure $m=(m_t)_{0\leq t \leq T} \in \MM{0}{T}$, the stochastic control for an individual player given $m$ is uniquely solvable. For notational convenience, we will use the same constant $K$ for all the conditions below.

\begin{assumpC}\label{linear}\label{mfg1_first_ass} The state process is linear in $(x,\alpha)$; for $\phi=b,\sigma,\tsg$, $\phi(t,x,m,\alpha) = \phi_0(t,m)+\la \phi_1(t,m),x \ra + \la \phi_2(t,m),\alpha \ra$, where $\phi_i = b_i,\sigma_i,\tsg_i$ resp., for $i=0,1,2$, are functions defined on $[0,T]\times \Ptwo(\R^n)$ with $\phi_1,\phi_2$ bounded and $\phi_0$ satisfies
\begin{equation*}
	| \phi_0(t,m) | \leq K\l(1+\l( \int_{\R^n} |y|^2 dm(y) \r)^{\frac{1}{2}}\r).
\end{equation*}

\end{assumpC}
\begin{assumpC}\label{lip_x}  $\d{x}f$, $\d{\alpha}f$, $\d{x}g$ exist and are $K$-Lipschitz continuous in $(x,\alpha)$ uniformly in $(t,m)$. 
%for any $t \in [0,T], x,x' \in \R^n, \alpha,\alpha' \in \R^k, m \in \Ptwo(\R^n)$.
%	\begin{equation}
%	\begin{gathered}
%		 |\d{x}g(x,m)-\d{x}g(x',m)|  \leq K |x-x'| \\ 
%		 |\d{x}f(t,x,m,\alpha)-\d{x}f(t,x',m,\alpha')|  \leq K \l( |x-x'| + |\alpha-\alpha'| \r)\\
%		 |\d{\alpha}f(t,x,m,\alpha)-\d{\alpha}f(t,x',m,\alpha')|  \leq K \l( |x-x'| + |\alpha-\alpha'| \r)
%	\end{gathered}
%	\end{equation} 
\end{assumpC}
\begin{assumpC}\label{growth} $f,g$ satisfy a quadratic growth condition in $m$ and $\d{x}f,\d{\alpha}f,\d{x}g$ satisfy a linear growth condition in $(x,\alpha,m)$. That is, for any $t \in [0,T], x \in \R^n, \alpha \in \R^k, m \in \Ptwo(\R^n)$, 
\begin{equation}\label{quadratic_growth}
	 \max\{|f(t,0,m,0)|, |g(0,m)|\}  \leq K \l(1 + \int_{\R^n} |y|^2 dm(y)\r),
\end{equation}
\begin{equation}
	\label{linear_growth}
	\begin{aligned}
		&\max\{|\d{\alpha}f(t,x,m,\alpha)|,	|\d{x}f(t,x,m,\alpha)|, |\d{x}g(x,m)|\}\\
		&\quad \leq K\l(1+|x|+|\alpha|+ \l(\int_{\R^n} |y|^2 dm(y)\r)^{\frac{1}{2}} \r).
	\end{aligned}	  
\end{equation}
		
%		\begin{equation}
%	\begin{gathered}
%		 |\d{x}g(x,m)|  \leq K\l(1+|x|+ \l(\int_{\R^n} |y|^2 dm(y)\r)^{\frac{1}{2}} \r) \\ 
%		 |\d{x}f(t,x,m,\alpha)|  \leq K \l(1+ |x| + |\alpha| + \l(\int_{\R^n} |y|^2 dm(y)\r)^{\frac{1}{2}}\r)\\
%		 |\d{\alpha}f(t,x,m,\alpha)|  \leq K \l(1+ |x| + |\alpha| +  \l(\int_{\R^n} |y|^2 dm(y)\r)^{\frac{1}{2}}\r)
%	\end{gathered}
%	\end{equation}
%In addition, $f,g$ satisfy a quadratic growth condition in $m$.
%	\begin{equation}
%	\begin{gathered}
%		 |g(0,m)|  \leq K\l(1+\int_{\R^n} |y|^2 dm(y)\r) \\ 
%		 |f(t,0,m,0)|  \leq K \l(1 + \int_{\R^n} |y|^2 dm(y)\r)\\
%	\end{gathered}
%	\end{equation}
%	
\end{assumpC}

\begin{assumpC}\label{convex}\label{mfg1_last_ass} $g$ is convex in $x$ and $f$ is convex jointly in $(x,\alpha)$ with strict convexity in $\alpha$. That is, for any $x,x' \in \R^n, m \in \Ptwo(\R^n)$, 
	\begin{equation}\label{convexity}	
		\la \d{x}g(x,m)-\d{x}g(x',m) , x-x' \ra \geq 0
	\end{equation}
and there exists a constant $c_f > 0$ such that for any $t \in [0,T], x,x' \in \R^n, \alpha,\alpha' \in \R^k, m \in \Ptwo(\R^n)$,
	\begin{equation}
		\label{convexity_f}
		\begin{aligned}
			f(t,x',m,\alpha') &\geq f(t,x,m,\alpha) + \la \d{x}f(t,x,m,\alpha),x'-x \ra\\
			&\quad + \la \d{\alpha}f(t,x,m,\alpha) , \alpha'-\alpha \ra + c_f|\alpha'-\alpha|^2.
		\end{aligned}
	\end{equation}
\end{assumpC}

The Lipschitz and linear growth conditions \ref{lip_x}, \ref{growth} are standard assumptions to ensure the existence of a strong solution.%See \cite{karatzas1991},\cite{oksendal1992} for some general references. 
 The linear-convex assumptions \ref{linear}, \ref{convex} are essential to our setup in various ways. First, they ensure that the Hamiltonian is strictly convex, so that there is a unique minimizer in a feedback form. In addition, they satisfy sufficient conditions for the SMP so that solving an optimal control problem can be translated to solving the corresponding FBSDE. See section 6.4.2 in \cite{pham2009} for instance. Lastly, they give a monotone property for the FBSDE corresponding to an individual player control problem \rref{ind_formulation} so that it is uniquely solvable. See  \cite{peng1999,hu1990} for well-posedness result of FBSDEs related to convex control problems.

The second set of assumptions are conditions on the $m$-argument in the cost functions. These assumptions are essential in showing the wellposed-ness of MFG with common noise.

\begin{assumpC}\label{no_m}\label{mfg2_first_ass}The functions $b,\sigma,\tsg$ are independent of m. 
\end{assumpC}

\begin{assumpC}(Lipschitz in $m$)\label{lip_m} $\d{x}g,\d{x}f$ is Lipschitz continuous in $m$ uniformly in $(t,x)$, i.e. there exists a constant $K$ such that
\begin{equation}
\begin{aligned}
	 &|\d{x}g(x,m)-\d{x}g(x,m')|  \leq K \W_2(m,m') \\
	 &|\d{x}f(t,x,m,\alpha)-\d{x}f(t,x,m',\alpha)|  \leq K \W_2(m,m')\\
\end{aligned}
\end{equation}
for all $t \in [0,T], x \in \R^n,\alpha \in \R^k, m,m' \in \Ptwo(\R^n)$, where $\W_2(m,m')$ is the second order Wasserstein metric defined by (\ref{wass}). 

%This is equivalent to the following; for any $t \in [0,T], x,\alpha \in \R, \xi,\xi'  \in \Ltwo(\b{\Omega},\b{F},\b{\mb{P}};\R)$ where $(\b{\Omega},\b{F},\b{\mb{P}})$ is arbitrary,
%\begin{equation}
%\begin{gathered}
%	 |\d{x}g(x,\mb{P}_\xi)-\d{x}g(x,\mb{P}_{\xi'})|  \leq K \|\xi-\xi'\|_2 \\
%	 |\d{x}f(t,x,\mb{P}_\xi,\alpha)-\d{x}f(t,x,\mb{P}_{\xi'},\alpha)|  \leq K  \|\xi-\xi'\|_2
%\end{gathered}
%\end{equation}
%where $\|\cdot\|_2$ denote the $\Ltwo$-norm
\end{assumpC}

\begin{assumpC}(Separable in $\alpha,m$)\label{sep} $f$ is of the form
	\begin{equation}\label{sep_f}
		f(t,x,m,\alpha) = f^0(t,x,\alpha)+f^1(t,x,m)
	\end{equation}
	where $f^0$ is assumed to be convex in $(x,\alpha)$ strictly in $\alpha$, $f^1$ is assumed to be convex in $x$.
\end{assumpC}

\begin{assumpC}(Weak monotonicity)\label{mon}\label{mfg2_last_ass}
 For all $t \in [0,T]$, $m,m' \in \Ptwo(\R^n)$ and $\gamma \in \P_2(\R^{2n})$ with marginals $m,m'$ respectively,
\begin{equation}
\begin{aligned}
		 &\int_{\R^2} \l[ \la \d{x}g(x,m)-\d{x}g(y,m') ,  x-y \ra \r] \gamma(dx,dy) \geq 0  \\
		& \int_{\R^2} \l[ \la \d{x}f(t,x,m,\alpha)-\d{x}f(t,y,m',\alpha) ,  x-y \ra \r] \gamma(dx,dy) \geq 0 
\end{aligned}
\end{equation}
	Equivalently, for any $x \in \R^n, \xi,\xi'  \in \Ltwo(\b{\Omega},\b{\F},\b{\mb{P}};\R^n)$ where $(\b{\Omega},\b{\F},\b{\mb{P}})$ is an arbitrary probability space,
\begin{equation}\label{monotone}\begin{aligned}
& \b{\mb{E}}[  \la \d{x}g(\xi,\mb{P}_\xi) - \d{x}g(\xi',\mb{P}_{\xi'} ),  \xi-\xi'  \ra ] \geq 0  \\
&		\b{\mb{E}}[  \la \d{x}f(t,\xi,\mb{P}_\xi,\alpha) - \d{x}f(t,\xi',\mb{P}_{\xi'},\alpha) , \xi-\xi' \ra] \geq 0 
\end{aligned}
\end{equation}
	
\end{assumpC}

%Lipschitz (in $m$) condition is similar to that used in other works for MFG models with no common noise. Separability condition \ref{sep} is not necessary in establishing existence for those models, but is needed for the uniqueness result. See Proposition 3.7 and 3.8 in \cite{carmona2013probabilistic} for instance. In our case, we rely on the monotonicity property of the McKean-Vlasov FBSDE and this separability condition is necessary for establishing this property. Our last assumption \ref{mon} can be viewed as a stronger version of the weak mean-reverting assumption used in \cite{carmona2013probabilistic} to show the existence result of MFG without common noise  and a weaker version of Lasry and Lions' monotonicity assumption used in \cite{cardaliaguet2010,carmona2013probabilistic,gomes2013} to show the uniqueness result. We discuss this point in more detail below. The weak monotone condition was first introduced in \cite{ahuja2014} for the terminal cost to obtain wellposed-ness result for MFG with common noise. Our result here extend it to cover a more general running cost function.

%\subsubsection{Comparison to assumptions for MFG without common noise}

Assumption \ref{mfg1_first_ass}-\ref{mfg1_last_ass} are similar to those used in \cite{carmona2013probabilistic} to apply the SMP to MFG without common noise. To establish existence result, in addition to \ref{mfg1_first_ass}-\ref{mfg1_last_ass}, they assume \ref{lip_m} and what they refer to as a \textit{weak mean reverting} assumption. The latter states that there exists a constant $C > 0$ such that for all $t \in [0,T], x \in \R^n$
\begin{equation}\label{weakMR}
\begin{aligned}
&\la x ,\d{x}f(t,0,\delta_x,0) \ra  \geq -C(1+|x|) \\
 &\la x , \d{x}g(0,\delta_x) \ra  \geq -C(1+|x|)
 \end{aligned}
 \end{equation} 
 where $\delta_x$ denotes the Dirac measure at $x$. By plugging in deterministic $\xi = x,\xi' = 0$ in \rref{monotone}, we can see that the weak monotonicity assumption \ref{mon} is a stronger version of \rref{weakMR}. The weak monotone condition was first introduced in \cite{ahuja2014} for the terminal cost to obtain wellposed-ness result for MFG with common noise under linear state process and quadratic running cost. Our result here extends it to cover a more general running cost function. 

Note that the separability condition \ref{sep} is not necessary for existence of a solution of MFG without common noise, but is only needed for the uniqueness result. See Proposition 3.7 and 3.8 in \cite{carmona2013probabilistic} for instance. In our case, we rely on the monotone property of the mean-field FBSDE and this condition is necessary to obtain this property.

%OLD VERSION FROM OLD PAPER
%Assumptions \ref{mfg2_first_ass}-\ref{mfg2_last_ass} and \ref{lip_m} are similar to those used in \cite{carmona2013probabilistic} where they establishes existence result for MFG without common noise. In addition, they also assume the following \textit{weak mean reverting} assumption which states that there exist a constant $C > 0$ such that for all $t \in [0,T], x \in \R^n$
%\todo{Rephrase a bit too similar to previous paper}
%\begin{equation}\label{weakMR}
%\begin{gathered}
%\la x ,\d{x}f(t,0,\delta_x,0) \ra  \geq -C(1+|x|) \\
% \la x , \d{x}g(0,\delta_x) \ra  \geq -C(1+|x|)
% \end{gathered}
% \end{equation}
% where $\delta_x$ denote the Dirac measure at $x$.
%Our weak monotonicity assumption \ref{mon} can be viewed as a stronger version of this assumption. This can be seen easily by plugging in deterministic $\xi = x,\xi' = 0$ in \rref{monotone}. Note that separability is not necessary, but . 
%
%

For the uniqueness result, the main assumptions in the literature \cite{carmona2013probabilistic,cardaliaguet2010,gomes2013survey} are the separability in the control and mean-field term (assumption \ref{sep}) and the Lasry and Lions' monotonicity property which states that
\begin{equation*}
	\int (h(x,m_1)-h(x,m_2))d(m_1-m_2)(x) \geq 0
\end{equation*}
for any $m_1,m_2 \in \Ptwo(\R^n)$. This condition can be expressed in terms of random variables as follows; For any $\xi,\xi' \in \Ltwo(\h{\Omega},\h{\F},\h{\mb{P}};\R^n)$ where $(\h{\Omega},\h{\F},\h{\mb{P}})$ is an arbitrary probability space. 
\begin{equation}\label{monLion}
 	 \h{\mb{E}}\l[ h(\xi',\mb{P}_{\xi'}) +h(\xi,\mb{P}_\xi) - h(\xi,\mb{P}_{\xi'}) - h(\xi',\mb{P}_\xi) \r] \geq 0
\end{equation}
Our weak monotonicity assumption \ref{mon} is, as the name suggests, a weaker version of \rref{monLion} when the cost functions are convex. See Lemma 4.2 in \cite{ahuja2014}. The converse of the proposition above does not hold as seen from the examples below (when $n=1$). 

%\begin{prop}\label{weakerMon} Let $h: \R^n \times \Ptwo(\R^n) \to \R^n$ be a convex continuously differentiable function satisfying the monotonicity condition (\ref{monLion}) stated above, then $h$ satisfies the weak monotonicity condition (\ref{monotone}).
%\end{prop}
%
%\begin{proof} Suppose (\ref{monLion}) holds. Let $\xi,\xi' \in\Ltwo(\h{\Omega},\h{\F},\h{\mb{P}};\R^n)$, then by convexity in $x$ of $h$, we get
%$$ h(\xi',\mb{P}_{\xi'}) - h(\xi,\mb{P}_{\xi'}) \leq  \la \d{x}h(\xi',\mb{P}_{\xi'}, \xi'-\xi \ra$$
%and
%$$ h(\xi,\mb{P}_\xi) - h(\xi',\mb{P}_\xi) \leq - \la \d{x}h(\xi,\mb{P}_\xi , \xi'-\xi \ra $$
% Summing up, taking expectation, and using (\ref{monLion}) yields (\ref{monotone}).
% 
%\end{proof}

\be\label{e1}
 \begin{gathered}
 	f(t,x,m,\alpha) = A\alpha^2 + B \l( x - \int z dm(z) \r)^2, \quad g(x,m) = C\l( x - \int z dm(z) \r)^2, 
\end{gathered}
\ee
or
\begin{gather*}
 	f(t,x,m,\alpha) =A\alpha^2 + B \int (x-z)^2 dm(z), \quad g(x,m) = C\int (x-z)^2 dm(z),
\end{gather*}
 where $A,B,C > 0$. As a result, we have given a more general uniqueness for MFG without common noise. These cost functions occur frequently in applications (see \cite{carmona2013mean,gueant2011} for instance). A similar example of cost functions satisfying our assumptions includes the general linear-quadratic mean-field games (LQMFG) discussed in \cite{bensoussan2014linear} where $f,g$ take the form
 \begin{equation}
\begin{aligned}
& f(t,x,m,\alpha) = \frac{1}{2}\l( qx^2 + \alpha^2 + \bq(x-s\bm)^2 \r) \\
& g(x,m) = \frac{1}{2} \l( q_T x^2 + (x-s_T \bm)^2\bq_T \r)
\end{aligned}
 \end{equation}
 where  $\bm = \int_\R zdm(z) $ and $q,\bq,s,q_T,\bq_T,s_T$ are constant satisfying $ q+\bq-\bq s \geq 0, q_T+\bq_T-\bq_Ts_T \geq 0 $.

\subsection{Existence and uniqueness}\label{subsec_mfg_wellposed} We begin by discussing the SMP for MFG with common noise. Given a stochastic flow of probability measure $m=(m_t)_{0 \leq t \leq T} \in \CP{n}$, we define the \textit{generalized Hamiltonian}
\begin{equation}
	\label{Hamiltonian_s}
	\begin{aligned}
		H(t,a,x,y,z,\tz,m) &\triangleq \la b(t,x,m,a ),y \ra + \la \sigma(t,x,m,a),z \ra\\
		&\quad + \la \tsg(t,x,m,a),\tz\ra + f(t,x,m,a).
	\end{aligned}
\end{equation}
Under assumption \ref{mfg1_first_ass}-\ref{mfg1_last_ass}, the generalized Hamiltonian is strictly convex in the control argument and has a unique minimizer
\begin{equation*}
	\ba: [0,T]  \times \R^{n}  \times \R^n \times \R^{n }  \times \R^{n } \times \Ptwo(\R^n) \to \R^k
\end{equation*}
We then define the \textit{Hamiltonian}
\begin{align*}
	\bH(t,x,y,z,\tz,m) &= \min_{a \in \R^k} H(t,a,x,y,z,\tz,m)\\
	&= H(t,\ba(t,x,y,z,\tz,m),x,y,z,\tz,m)
\end{align*}
and define $(\bb,\bsg,\btsg)(t,x,y,z,\tz,m)$ similarly. It is easy to check that
\begin{equation*}
	\d{x}H(t,\ba(t,x,y,z,\tz,m),x,y,z,\tz,m) = \d{x}\bH(t,x,y,z,\tz,m)
\end{equation*}
Next, consider the system of forward backward stochastic differential equation (FBSDE)
\begin{equation}
	\label{fbsde_ind}
	\begin{aligned}
		dX_{t} &= \bb(t,X_t,Y_t,Z_t,\tZ_t,m_t) dt + \bsg(t,X_t,Y_t,Z_t,\tZ_t,m_t) dW_{t}\\
		&\quad + \btsg(t,X_t,Y_t,Z_t,\tZ_t,m_t) \tdW_{t} \\
		dY_{t}  &=  -\d{x}\bH(t,X_t,Y_t,Z_t,\tZ_t,m_t)dt+Z_{t} dW_{t} + \tZ_{t}\tdW_{t} \\
		X_{0} &= \xi_{0}, \quad Y_{T} = \d{x}g(X_{T},m_T) 
	\end{aligned}
\end{equation}
We now state the SMP for an individual control problem given 
\begin{equation*}
	m \in \CP{n}
\end{equation*}
in term of FBSDE \rref{fbsde_ind}. 

\begin{thm}\label{smp_individual}
Assume that \ref{mfg1_first_ass}-\ref{mfg1_last_ass} holds, let
\begin{equation*}
	m=(m_t)_{0 \leq t \leq T} \in \CP{n},
\end{equation*}
then the individual control problem given $m$ has an optimal 
control 
\begin{equation*}
	\ha_t \in \Htwo{k}
\end{equation*}
if and only if FBSDE \rref{fbsde_ind} has an adapted solution $(X_t,Y_t,Z_t,\tZ_t)_{0 \leq t \leq T}$ satisfying
\begin{equation*}
	\mb{E}\l[ \sup_{0 \leq t \leq T} [ |X_t|^2 + |Y_t|^2 ] + \int_0^T [|Z_t|^2 + |\tZ_t|^2] dt \r] < \infty .
\end{equation*}
In that case, the optimal control is given by 
\begin{equation*}
	\ha_t = \ba(t,X_t,Y_t,Z_t,\tZ_t,m_t), \quad\forall t \in [0,T]
\end{equation*}
\end{thm}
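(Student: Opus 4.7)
The plan is to prove Theorem~\ref{smp_individual} by adapting the classical stochastic maximum principle (SMP) to the setting of random coefficients: since the exogenous flow $m = (m_t)_{0 \leq t \leq T}$ is $\t{\mb{F}}$-adapted but otherwise fixed in the problem, the individual's problem reduces to a convex control problem with random, progressively measurable parameters. Sufficiency will follow from the convexity assumptions \ref{mfg1_last_ass}; necessity will follow from a first-order variational argument combined with linear BSDE theory.

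For the sufficiency direction, I would start from a solution $(X_t, Y_t, Z_t, \tZ_t)$ of \rref{fbsde_ind} with the stated integrability and define $\ha_t \triangleq \ba(t, X_t, Y_t, Z_t, \tZ_t, m_t)$. Admissibility $\ha \in \Htwo{k}$ follows from a linear-growth bound on $\ba$ implied by the linearity of $(b, \sigma, \tsg)$ in $a$ (\ref{mfg1_first_ass}) and the strict convexity of $f$ in $a$ (\ref{mfg1_last_ass}). For an arbitrary competitor $\alpha \in \Htwo{k}$ with state $X^\alpha$, I would apply \ito's formula to $\la Y_t, X^\alpha_t - X_t \ra$: the backward equation converts the drift into the Hamiltonian difference, the terminal condition $Y_T = \d{x}g(X_T, m_T)$ pairs with convexity of $g$ in $x$, and joint convexity of $f$ in $(x, a)$ together with the pointwise minimality of $\ha_t$ for $H(t, \cdot, X_t, Y_t, Z_t, \tZ_t, m_t)$ (so that $\d{a}H = 0$ at $\ha_t$) yields the desired inequality for the cost functionals.

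For the necessity direction, given an optimal $\ha \in \Htwo{k}$ with state $\hX$, I would define $(Y, Z, \tZ)$ as the unique adapted solution of the linear BSDE with driver $-\d{x}H(t, \ha_t, \hX_t, \cdot, \cdot, \cdot, m_t)$ and terminal condition $\d{x}g(\hX_T, m_T)$. Existence is standard since \ref{mfg1_first_ass} makes the driver affine in $(Y, Z, \tZ)$ with bounded linear coefficients, and \ref{growth} ensures square-integrable terminal data. The first-order necessary condition obtained from convex perturbations $\alpha^\eps = \ha + \eps(\alpha - \ha)$, combined with \ito's formula on $\la Y_t, X^{\alpha^\eps}_t - \hX_t \ra$ and passing to the limit $\eps \to 0^+$, produces the variational inequality
\begin{equation*}
\mb{E} \int_0^T \la \d{a}H(t, \ha_t, \hX_t, Y_t, Z_t, \tZ_t, m_t),\, \alpha_t - \ha_t \ra\, dt \geq 0, \qquad \forall\, \alpha \in \Htwo{k}.
\end{equation*}
Strict convexity of $H$ in $a$ then forces $\ha_t = \ba(t, \hX_t, Y_t, Z_t, \tZ_t, m_t)$ $dt \otimes d\mb{P}$-a.e., so $(\hX, Y, Z, \tZ)$ solves \rref{fbsde_ind}.

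The main obstacle I anticipate is tracking the randomness of $m$: since $m_t$ is $\tF_t$-measurable rather than deterministic, the coefficients of both the state SDE and the adjoint BSDE are random, and one must verify that every step (existence for the linear BSDE, validity of \ito's formula for the duality product, measurable selection of the minimizer $\ba$) extends to the progressively-measurable setting. The convexity and growth bounds in \ref{mfg1_first_ass}-\ref{mfg1_last_ass} are uniform in $(t, m)$, so this is a matter of careful measurability bookkeeping rather than a new conceptual difficulty, and the argument should parallel the standard SMP treatment in, e.g., Section~6.4.2 of \cite{pham2009}.
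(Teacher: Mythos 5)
Your proposal is correct and takes essentially the same route as the paper: the paper's entire proof is the observation that, with $m$ fixed, the individual problem is a classical linear-convex control problem with random (progressively measurable) coefficients, after which it simply cites the SMP for that setting (Theorem 3.2 in \cite{karatzas1995}). Your sufficiency argument (\ito\ duality plus convexity) and necessity argument (convex perturbation, adjoint BSDE, strict convexity in $a$) are precisely the content of that cited theorem, so you have merely inlined the citation rather than found a different approach.
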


\begin{proof} Given $m=(m_t)_{0 \leq t \leq T} \in \CP{n}$, then an individual control problem given $m$ is simply a classical control problem with random coefficients and cost functions. The result then follows from the SMP for linear-covex control with random coefficients (see Theorem 3.2 in \cite{karatzas1995}).
\end{proof}

%\begin{remark} This is a version of SMP specifically to 
%\end{remark}

The definition of a MFG solution states that given the stochastic flow of probability measure $m^\alpha \in \CP{n}$ corresponding to a control $\alpha \in \Htwo{k}$, the optimal control of an individual control problem given $m^\alpha$ is again $\alpha$. This definition is equivalent to the following \textit{consistency} condition
\begin{equation*}
	m^\alpha_t = \mb{P}_{X^\alpha_t | \tF_t}
\end{equation*}
Plugging this to \rref{fbsde_ind}, we have the SMP for \cMFG.

\begin{thm}[SMP for $\cMFG$]\label{smp_mfg}
Assume that \ref{mfg1_first_ass}-\ref{mfg1_last_ass} holds, then $\ha \in \Htwo{k}$ is a solution to MFG  if and only if the FBSDE
\begin{equation}
	\label{fbsde_mfg}
	\begin{aligned}
		dX_{t} &= \bb(t,X_t,Y_{t},Z_t,\tZ_t,\mb{P}_{X_t|\tF_t}) dt + \bsg(t,X_t,Y_{t},Z_t,\tZ_t,\mb{P}_{X_t|\tF_t}) dW_{t}\\
		&+ \btsg(t,X_t,Y_{t},Z_t,\tZ_t,\mb{P}_{X_t|\tF_t}) \tdW_{t} \\
		dY_{t} &=  -\d{x}\bH(t,X_t,Y_{t},Z_t,\tZ_t,\mb{P}_{X_t|\tF_t})dt+Z_{t} dW_{t} + \tZ_{t}\tdW_{t} \\
		X_{0} &= \xi_{0}, \quad Y_{T} = \d{x}g(X_{T},\mb{P}_{X_T|\tF_T})
	\end{aligned}
\end{equation}
has an adapted solution $(X_t,Y_t,Z_t,\tZ_t)_{0 \leq t \leq T}$ satisfying
\begin{equation*}
	\mb{E}\l[ \sup_{0 \leq t \leq T} [ |X_t|^2 + |Y_t|^2 ] + \int_0^T [|Z_t|^2 + |\tZ_t|^2] dt \r] < \infty .
\end{equation*}
In that case, a MFG solution is given by
\begin{equation*}
	\ha_t =  \ba(t,X_t,Y_t, Z_t,\tZ_t,\mb{P}_{X_t|\tF_t}), \qquad \forall t \in [0,T]
\end{equation*}
\end{thm}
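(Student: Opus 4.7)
The proof is essentially a consistency bridge between the individual-player SMP (Theorem~\ref{smp_individual}) and the MFG fixed-point condition $m_t=\mb{P}_{X^{\hat\alpha}_t\mid\tF_t}$. My plan is to show both directions by plugging the consistency condition into the FBSDE~\rref{fbsde_ind}.

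For the ``only if'' direction, suppose $\hat\alpha\in\Htwo{k}$ is an MFG solution, and let $X^{\hat\alpha}_t$ be the associated controlled state process. Define $m_t\triangleq \mb{P}_{X^{\hat\alpha}_t\mid\tF_t}$; this gives an $\tF_t$-progressively measurable $\Ptwo(\R^n)$-valued process in $\CP{n}$ (using Lemma 1.1 of \cite{kurtz1988}, as noted in Section~\ref{sec_mkvfbsde}). By definition of an MFG solution, $\hat\alpha$ is optimal for the individual control problem given this flow $m$. Theorem~\ref{smp_individual} then produces an adapted solution $(X_t,Y_t,Z_t,\tZ_t)$ to FBSDE~\rref{fbsde_ind} with coefficients frozen at $m_t$, along with the feedback representation $\hat\alpha_t=\bar\alpha(t,X_t,Y_t,Z_t,\tZ_t,m_t)$. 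The uniqueness of the controlled SDE forces $X_t=X^{\hat\alpha}_t$ a.s., so $m_t=\mb{P}_{X_t\mid\tF_t}$; substituting this into \rref{fbsde_ind} produces exactly the FBSDE~\rref{fbsde_mfg}, with the required integrability inherited from $(X_t,Y_t,Z_t,\tZ_t)$.

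For the ``if'' direction, suppose $(X_t,Y_t,Z_t,\tZ_t)$ is an adapted solution to \rref{fbsde_mfg} with the stated integrability. Define $m_t\triangleq \mb{P}_{X_t\mid\tF_t}$, which is $\tF_t$-progressively measurable and lies in $\CP{n}$. Set $\hat\alpha_t\triangleq\bar\alpha(t,X_t,Y_t,Z_t,\tZ_t,m_t)$; the linear-quadratic structure from \ref{mfg1_first_ass}-\ref{mfg1_last_ass}, combined with the linear growth of $\bar\alpha$ in its state arguments (which follows from strict convexity of the Hamiltonian in $a$ and the linearity of $b,\sigma,\tsg$ in $a$ via the first-order condition $\d{\alpha}f+\la\d{\alpha}b,y\ra+\la\d{\alpha}\sigma,z\ra+\la\d{\alpha}\tsg,\tz\ra=0$), ensures $\hat\alpha\in\Htwo{k}$. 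Substituting $m_t=\mb{P}_{X_t\mid\tF_t}$ into \rref{fbsde_mfg} shows that $(X_t,Y_t,Z_t,\tZ_t)$ solves FBSDE~\rref{fbsde_ind} for this particular stochastic flow $m$. Applying the sufficiency direction of Theorem~\ref{smp_individual}, $\hat\alpha$ is optimal for the individual control problem given $m$, and by construction $X_t=X^{\hat\alpha}_t$, so $m_t=\mb{P}_{X^{\hat\alpha}_t\mid\tF_t}$ and the MFG consistency condition holds.

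The step I expect to require the most care is the verification that the feedback control $\hat\alpha$ constructed in the sufficiency direction actually belongs to $\Htwo{k}$. This relies on extracting an explicit bound on $\bar\alpha$ from the first-order optimality condition of the Hamiltonian: under \ref{linear} the derivatives $\d{\alpha}b,\d{\alpha}\sigma,\d{\alpha}\tsg$ are bounded, while \ref{convex} and \ref{growth} give strong convexity of $f$ in $\alpha$ and linear growth of $\d{\alpha}f$, yielding $|\bar\alpha(t,x,y,z,\tz,m)|\le C(1+|x|+|y|+|z|+|\tz|+(\int|u|^2dm(u))^{1/2})$. Combined with the $\H^2$ bounds on $(X,Y,Z,\tZ)$ and the fact that $\mb{E}\int_0^T\int|u|^2 dm_t(u)\,dt=\mb{E}\int_0^T|X_t|^2\,dt<\infty$, this yields $\hat\alpha\in\Htwo{k}$.
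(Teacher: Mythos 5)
Your proof is correct and takes essentially the same route as the paper: the paper's entire argument for Theorem~\ref{smp_mfg} is the one-sentence observation that an MFG solution is equivalent to the consistency condition $m_t = \mb{P}_{X^{\alpha}_t\mid\tF_t}$, which when plugged into FBSDE \rref{fbsde_ind} of Theorem~\ref{smp_individual} yields \rref{fbsde_mfg}. Your two-directional elaboration (the pathwise-uniqueness identification $X_t = X^{\hat\alpha}_t$, and the verification that $\hat\alpha \in \Htwo{k}$ via the first-order condition and the growth assumption \ref{growth}) simply makes explicit the details the paper leaves implicit.
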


%\begin{proof} Suppose that $\ha \in \Htwo{k}$ is a solution to MFG. Let $\h{m}_t = \Law(X^{\ha}_t| \tF_t) \ha$, then by definition of being a MFG solution, $\ha$ is an optimal control for an individual control problem given $\h{m}$. By Theorem \ref{smp_individual}, it mini
%\end{proof}

Equation \rref{fbsde_mfg} was first introduced in \cite{carmona2013probabilistic} from the $\oMFG$ problem in which case the conditional law $\mb{P}_{X_t|\tF_t}$ is simply the law $\mb{P}_{X_t}$. In \cite{carmona2013probabilistic}, Carmona and Delarue, by using Schauder fixed point theorem, show that the mean-field FBSDE corresponding to a $\oMFG$ is solvable under assumptions similar to \ref{mfg1_first_ass}-\ref{mfg1_last_ass}, \ref{lip_m}, plus what they call a weak mean reverting assumptions (see \rref{weakMR}). However, the same proof cannot be extended to the case of common noise since we can no longer find an invariant compact subset. This is due to the fact that, in the case of common noise, we are dealing with a much larger space of \textit{stochastic} flow of probability measure instead of a \textit{deterministic} one. 

Since then, several work has been done that deal with the common noise models \cite{carmona2014commonnoise,ahuja2014,lacker2014translation}. In 
\cite{carmona2014commonnoise}, Carmona et al. considered the notion of \textit{weak} solution and, by finite-dimensional approximation of the common noise, proved 
its existence under a rather general set of assumptions. In \cite{lacker2014translation}, Lacker and Webster gives existence result under a class of 
\textit{translation invariant} MFG models. In \cite{ahuja2014}, Ahuja introduces a weak monotone assumption and prove well-posedness result for $\cMFG$ using the 
Banach fixed point theorem over small time interval and extend the result to arbitrary time duration. Our work here essentially gives an extension of \cite{ahuja2014} to a more general system by viewing it as part of a general class of 
monotone functional FBSDE.

We now discuss existence and uniqueness of solutions to \rref{fbsde_mfg} and thereby gives a well-posedness result of $\cMFG$. Using Theorem \ref{smp_mfg}, these results are mostly an application of the results from section \ref{sec_mkvfbsde}.

\begin{thm}\label{wellposed_mkfbsde} Assume \ref{mfg1_first_ass}-\ref{mfg2_last_ass} hold, then there exists a unique solution $(X_t,Y_t,Z_t,\tZ_t)_{s\leq t \leq T}$ to FBSDE \rref{fbsde_mfg} satisfying
\begin{equation}
\begin{aligned}
&\mb{E}\l[ \sup_{s \leq t \leq T} [ |X|_t^2 + |Y|_t^2 ] + \int_s^T [|Z|_t^2 + |\tZ|_t^2] dt \r] < \infty \\
%&\quad \leq C\l( \mb{E}[\xi^2] + (\d{x}g(0,\delta_0))^2 + \int_s^T(\ba(t,0,\delta_0,0))^2 + (\d{x}\bH(t,0,\delta_0,0))^2 dt + \sigma^2+\eps^2 \r) 
\end{aligned}
\end{equation}
%where $\delta_a$ denote the dirac measure at $a$. 
\end{thm}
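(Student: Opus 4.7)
The strategy is to use the stochastic maximum principle (Theorem \ref{smp_mfg}) to reduce \eqref{mfg_formulation} to the mean-field FBSDE \eqref{fbsde_mfg} with conditional law, and then invoke the conditional well-posedness Theorem \ref{wellposed_mckean} from Section \ref{sec_mkvfbsde}. The bulk of the task is thus to verify that the SMP coefficients $(\bb, \bsg, \btsg, -\nabla_x \bH, \nabla_x g)$ satisfy Assumptions \ref{measurable_mkv}--\ref{lip_mon_mkv}.

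Under \ref{mfg2_first_ass}, the functions $b, \sigma, \tsg$ are affine in $(x, \alpha)$ with bounded time-dependent coefficients independent of $m$, so the first-order condition $\nabla_\alpha H = 0$ reads
\[
	b_2(t)^\top y + \sigma_2(t)^\top z + \tsg_2(t)^\top \tz + \nabla_\alpha f^0(t, x, \alpha) = 0.
\]
By the strict convexity of $f^0$ in $\alpha$ from \ref{convex}, this uniquely defines a Lipschitz minimizer $\ba(t, x, y, z, \tz)$ depending on $(y, z, \tz)$ only through the combination $v := b_2^\top y + \sigma_2^\top z + \tsg_2^\top \tz$. This structural feature dictates the choice of linear maps $c^{(1)}_t(y) = b_2(t)^\top y$, $c^{(2)}_t(z) = \sigma_2(t)^\top z$, $c^{(3)}_t(\tz) = \tsg_2(t)^\top \tz$, uniformly bounded by \ref{mfg2_first_ass}. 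Measurability \ref{measurable_mkv} and coefficient integrability follow from \ref{growth}, while the Lipschitz bounds \ref{lip_mon_mkv}(a) come from the affine form of $\bb, \bsg, \btsg$, the envelope identity $\nabla_x \bH = b_1^\top y + \sigma_1^\top z + \tsg_1^\top \tz + \nabla_x f(t, x, m, \ba)$, and the Lipschitz hypotheses \ref{lip_x}--\ref{lip_m}.

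The decisive step is verifying the monotonicity \ref{lip_mon_mkv}(b). A direct expansion shows that the cross terms involving $b_1, \sigma_1, \tsg_1$ cancel identically, yielding
\[
	\la \D F, \D X\ra + \la \D B, \D Y\ra + \la \D \Sigma, \D Z\ra + \la \D \tSigma, \D \tZ\ra = \la \D \ba, v\ra - \la \D \nabla_x f, \D X\ra.
\]
Substituting $v = -(\nabla_\alpha f^0(x, \ba) - \nabla_\alpha f^0(x', \ba'))$ from the first-order condition, splitting $\nabla_x f = \nabla_x f^0 + \nabla_x f^1$ via \ref{sep}, and invoking the joint convexity of $f^0$ in $(x, \alpha)$ strictly in $\alpha$ with constant $c_f$, one bounds the $f^0$-contribution pointwise by $-c_f |\D \ba|^2$. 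Taking conditional expectation and absorbing the $f^1$-contribution via the weak monotonicity \ref{mon} then gives the required monotonicity estimate; the terminal inequality $\h{\mb{E}}\la \D G, \D X\ra \geq 0$ follows analogously from convexity of $g$ in $x$ and the weak monotonicity of $\nabla_x g$ in $m$. The main obstacle is realizing this strict-convex bound in $|\D \ba|^2$ in the form $-\beta\,\h{\mb{E}}|c^{(1)}(\D Y)+c^{(2)}(\D Z)+c^{(3)}(\D \tZ)|^2$ demanded by \ref{lip_mon_mkv}(b); the translation relies on the Lipschitz relation between $v$ and $(\D x, \D \ba)$ coming from the first-order condition, with separability \ref{sep} and weak monotonicity \ref{mon} playing the essential role of decoupling the strict-convex control contribution from the nonstrict mean-field contribution handled only in expectation. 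Once \ref{measurable_mkv}--\ref{lip_mon_mkv} are in place, Theorem \ref{wellposed_mckean} immediately delivers the existence, uniqueness, and the stated integrability estimate.
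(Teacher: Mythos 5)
Your proposal follows the paper's proof essentially step for step: the same reduction to verifying \ref{measurable_mkv}--\ref{lip_mon_mkv} for $(\bb,\bsg,\btsg,-\d{x}\bH,\d{x}g)$ followed by an appeal to Theorem \ref{wellposed_mckean}, the same choice $c^{(1)}_t(y)=b_2(t)^Ty$, $c^{(2)}_t(z)=\sigma_2(t)^Tz$, $c^{(3)}_t(\tz)=\tsg_2(t)^T\tz$, the same cancellation of the $b_1,\sigma_1,\tsg_1$ cross terms (the paper's \rref{all_eq}), the same convexity bound by $-c_f|\D\ba|^2$ (the paper's \rref{bound_f0}), and the same use of \ref{mon} to discard the $f^1$ contribution (the paper's \rref{mon_fg_2}).

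However, the step you yourself single out as ``the main obstacle'' is not closed by the mechanism you invoke, and the paper's own proof is equally thin at exactly this point. Write $\D v = b_2(t)^T\D Y+\sigma_2(t)^T\D Z+\tsg_2(t)^T\D\tZ$. Since $\ba$ depends on $x$ as well as on $v$, the first-order condition \rref{opt} only yields $|\D v| = |\D\d{\alpha}f^0| \le K\l(|\D X|+|\D\ba|\r)$, hence
$-c_f\,\h{\mb{E}}|\D\ba|^2 \le -\tfrac{c_f}{2K^2}\,\h{\mb{E}}|\D v|^2 + c_f\,\h{\mb{E}}|\D X|^2$,
and the spurious term $+c_f\,\h{\mb{E}}|\D X|^2$ has nothing to absorb it: the weak monotonicity \ref{mon} supplies only a term of favorable sign, never a strictly negative multiple of $\h{\mb{E}}|\D X|^2$. (The paper's two-sided bound \rref{bound_diff_alpha}, cited at this point, is derived with $x$ held fixed, so it does not apply when $X\ne X'$ either.) A clean repair bypasses $|\D\ba|^2$ altogether: the quantity to be bounded is $-\l[\la \D\d{x}f^0,\D X\ra + \la \D\d{\alpha}f^0,\D\ba\ra\r]$ plus the $f^1$ term, and since $f^0(t,\cdot,\cdot)$ is jointly convex (\ref{sep}) with Lipschitz gradient in $(x,\alpha)$ of some constant $L\le 2K$ (\ref{lip_x}), the Baillon--Haddad (co-coercivity) inequality gives, pointwise,
\begin{equation*}
\la \D\d{x}f^0,\D X\ra + \la \D\d{\alpha}f^0,\D\ba\ra \;\ge\; \frac{1}{L}\l(|\D\d{x}f^0|^2+|\D\d{\alpha}f^0|^2\r) \;\ge\; \frac{1}{L}|\D v|^2,
\end{equation*}
which, after taking expectation and adding the $f^1$ term handled by \ref{mon}, is exactly \ref{lip_mon_mkv}\rref{mon_mkv_b} with $\beta=1/L$. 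With this substitution your argument (and the paper's) is complete; as written, the decisive inequality is asserted rather than proved.
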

 
\begin{proof} We need to verify that under \ref{mfg1_first_ass}-\ref{mfg2_last_ass}, the corresponding functions $(\bb,\bsg, \btsg, -\d{x}\bH, \d{x}g)$ of FBSDE \rref{fbsde_mfg} satisfies \ref{measurable_mkv}-\ref{lip_mon_mkv}. The result then follows from Theorem \ref{wellposed_mckean}.

First, using \ref{linear}, \ref{no_m}, \ref{sep} and optimal condition for $\ba$, it follows that
\begin{equation}\label{opt}
\begin{aligned}
0 &= \d{\alpha}H(t,\ba(t,x,y,z,\tz,m),x,y,z,\tz,m)  \\
& = \d{\alpha}b(t,x,m,\ba(t,x,y,z,\tz,m))^T y +  \d{\alpha}\sigma(t,x,m,\ba(t,x,y,z,\tz,m))^T z\\
&\quad + \d{\alpha}\tsg(t,x,m,\ba(t,x,y,z,\tz,m))^T \tz + \d{\alpha}f(t,x,m,\ba(t,x,y,z,\tz,m)) \\
& =  b_2(t)^T y + \sigma_2(t)^T z  + \tsg_2(t)^T \tz + \d{\alpha}f^0(t,x,\ba(t,x,y,z,\tz,m))
\end{aligned}
\end{equation}
This implies that $\ba$ is independent of $m$. From now, we write $\ba =\ba(t,x,y,z,\tz)$. We also have
\begin{equation}
	\label{opt2}
	\begin{aligned}
		\d{x}\bH(t,x,y,z,\tz,m) &= \d{x}H(t,\ba(t,x,y,z,\tz),x,y,z,\tz,m)\\
		&= b_1(t)^T y  +  \sigma_1(t)^T z  + \tsg_1(t)^T\tz  +  \d{x}f^1(t,x,m)\\
		&\qquad + \d{x}f^0(t,x,\ba(t,x,y,z,\tz))
	\end{aligned}
\end{equation}
Furthermore, by using strict convexity assumption \ref{convex},  we have
\be
\begin{aligned}
	f^0(t,x',\alpha') &\geq f^0(t,x,\alpha)+\la \d{x}f^0(t,x,\alpha), x'-x \ra\\
	&\quad +\la \d{\alpha}f^0(t,x,\alpha), \alpha'-\alpha \ra + c_f |  \alpha'-\alpha|^2\\
	f^0(t,x,\alpha) &\geq f^0(t,x',\alpha')+\la \d{x}f^0(t,x',\alpha'), x-x' \ra\\
	&\quad +\la \d{\alpha}f^0(t,x',\alpha'), \alpha-\alpha' \ra + c_f |  \alpha'-\alpha|^2.
%&f^0(t,x,\alpha)+\la \d{x}f^0(t,x,\alpha), x'-x \ra +\la \d{\alpha}f^0(t,x,\alpha), \alpha'-\alpha \ra + c_f |  \alpha'-\alpha|^2 \leq f^0(t,x',\alpha') \\
%&f^0(t,x',\alpha')+\la \d{x}f^0(t,x',\alpha'), x-x' \ra +\la \d{\alpha}f^0(t,x',\alpha'), \alpha-\alpha' \ra + c_f |  \alpha'-\alpha|^2 \leq f^0(t,x,\alpha)
\end{aligned}
\ee
Summing both equations yields
\begin{equation}
	\label{bound_alpha}
	\begin{aligned}
		 2c_f |  \alpha'-\alpha|^2 &\leq \la \d{x}f^0(t,x',\alpha')-\d{x}f^0(t,x,\alpha) , x'-x \ra  \\
		 &\quad+\la \d{\alpha}f^0(t,x',\alpha')-\d{\alpha}f^0(t,x,\alpha), \alpha'-\alpha \ra
	\end{aligned}
\end{equation}
%\be\label{bound_alpha}
%  -\la \d{x}f^0(t,x',\alpha')-\d{x}f^0(t,x,\alpha) , x'-x \ra +\la \d{\alpha}f^0(t,x',\alpha')-\d{\alpha}f^0(t,x,\alpha), \alpha'-\alpha \ra + 2c_f |  \alpha'-\alpha|^2 \leq 0 
% \ee

Now we verify \ref{measurable_mkv}. From \rref{opt}, we have
\begin{equation*}
	\d{\alpha}f^0(t,0,\ba(t,0,0,0,0)) = 0
\end{equation*}
Combining with \rref{bound_alpha} using $x=x'=\alpha'=0$, it follows that
\be\label{bound_alpha0}
 \int_0^T |\ba(t,0,0,0,0)|^2 dt  \leq \frac{1}{c_f} \int_0^T |\d{\alpha}f^0(t,0,0)|^2 dt < \infty 
\ee
By assumption \ref{linear}, we then have
\begin{align*}
	\int_0^T |\bb(t, 0, 0, 0, 0, \delta_0)|^{2} dt &= \int_0^T |b(t,0,\delta_0,\ba(t,0,0,0,0))|^2 dt\\
	&= \int_0^T |b_0(t)+b_2(t)\ba(t,0,0,0,0)|^2 dt < \infty
\end{align*}
and similarly for $\bsg,\btsg$. The same bound holds for $\d{x}\bH,\d{x}g$ by \rref{opt2}, \rref{bound_alpha0}, and the linear growth assumption \ref{growth}. Thus,  \ref{measurable_mkv} holds as desired.

Next, by using \rref{opt} with  $(x,y,z,\tz),(x',y,z,\tz) \in \R^p$, taking the difference, and using \rref{bound_alpha}, it follows that $\ba$ is Lipschitz in $x$. Furthermore, by using \rref{opt} again with $(x,y,z,\tz),(x,y',z',\tz') \in \R^p$ and taking the difference, we get
\begin{align*}
	0 &= b_2(t)^T \D y +  \sigma_2(t)^T \D z  +  \tsg_2(t)^T\D \tz  +\d{\alpha}f^0(t,x,\ba(t,x,y,z,\tz))\\
	&\quad - \d{\alpha}f^0(t,x,\ba(t,x,y',z',\tz' ))
\end{align*}
Using \rref{bound_alpha} with $x'=x$ and Lipschitz assumption on $\d{\alpha}f^0$, we have
\begin{equation}
	\label{bound_diff_alpha}
	\begin{aligned}
		\frac{1}{K} |  b_2(t)^T \D y +  \sigma_2(t)^T \D z  +  \tsg_2(t)^T\D \tz  | 
		&\leq | \ba(t,x,y,z,\tz)) -\ba(t,x,y',z',\tz' )) |\\
		&\leq K |  b_2(t)^T \D y +  \sigma_2(t)^T \D z  +  \tsg_2(t)^T\D \tz  |
	\end{aligned}
\end{equation}
%\be
%	\label{bound_diff_alpha}
% \frac{1}{K} |  b_2(t)^T \D y +  \sigma_2(t)^T \D z  +  \tsg_2(t)^T\D \tz  | \leq | \ba(t,x,y,z,\tz)) -\ba(t,x,y',z',\tz' )) | \leq K |  b_2(t)^T \D y +  \sigma_2(t)^T \D z  +  \tsg_2(t)^T\D \tz  |
%\ee
That is,
\begin{equation}
	\label{lip_ba}
	\begin{aligned}
		&| \ba(t,x,y,z,\tz)) -\ba(t,x',y',z',\tz' )) |\\
		&\quad \leq K \l(  |\Delta x | + | b_2(t)^T \D y +  \sigma_2(t)^T \D z  +  \tsg_2(t)^T\D \tz | \r).
	\end{aligned}
\end{equation}
%\be\label{lip_ba}
% | \ba(t,x,y,z,\tz)) -\ba(t,x',y',z',\tz' )) | \leq K \l(  |\Delta x | + | b_2(t)^T \D y +  \sigma_2(t)^T \D z  +  \tsg_2(t)^T\D \tz | \r) \ee
Combining with \ref{linear},\ref{no_m}, and Lipschitz in $(x,m)$ of $f^1$ (\ref{lip_x},\ref{lip_m}), \ref{lip_mon_mkv}\rref{lip_mkv_a} then follows.

Lastly, we check the monotonicity condition \ref{lip_mon_mkv}\rref{mon_mkv_b}. For $(X,Y,Z,\tZ)$, $(X',Y',Z',\tZ') \in \Ltwo(\h{\Omega},\h{\F},\h{\mb{P}};\R^p)$, 
we have
\be\label{all_eq}
\begin{aligned}
 -\la \D \d{x}\bH_t, \D X \ra  &=- \la b_1(t)^T \D Y, \D X \ra -  \la \sigma_1(t)^T\D Z, \D X \ra - \langle \tsg_1(t)^T\D \tZ, \D X \rangle\\
 &\quad - \la \D\d{x}f^0_t, \D X\ra -\la \D\d{x}f^1_t, \D X \ra  \\
  \la \D \bb_t, \D Y \ra &= \la b_1(t)\D X, \D Y \ra + \la b_2(t)\D \ba , \D Y \ra  \\
 \tr \la \D \bsg_t, \D Z \ra &= \la \sg_1(t)\D X, \D Z \ra + \la \sg_2(t)\D \ba , \D Z \ra \\
   \tr \langle \D \btsg_t, \D \tZ \rangle &= \langle \tsg_1(t)\D X, \D \tZ \rangle + \langle \tsg_2(t)\D \ba , \D \tZ \rangle 
\end{aligned}
\ee
where $\D \d{x}\bH_t = \d{x}\bH(t,X',Y',Z',\tZ')-\d{x}\bH(t,X,Y,Z,\tZ), \D X = X'-X$ and similarly for other terms. From \rref{opt} and \rref{bound_alpha}, we have
\be\label{bound_f0}
  -\la \D \d{x}f^0_t , \Delta X \ra + \la b_2(t)^T \D Y +  \sigma_2(t)^T \D Z  +  \tsg_2(t)^T\D \tZ, \Delta \ba \ra + 2 c_f| \D \ba | ^2 \leq 0
 \ee
Moreover, by the weak monotonicity assumption \ref{mon}, we have
\be\label{mon_fg_2}
\begin{aligned}
&\h{\mb{E}}\l[ \la \D \d{x}f^1_t, \D X \ra  \r] \geq 0,\quad \h{\mb{E}}\l[ \la \D \d{x}g , \D X \ra \r]  \geq 0 
\end{aligned}
\ee
Combining \rref{bound_diff_alpha},\rref{all_eq},\rref{bound_f0}, and \rref{mon_fg_2} yields \ref{lip_mon_mkv}\rref{mon_mkv_b} as desired.

\end{proof}

From Theorem \ref{smp_mfg},\ref{wellposed_mkfbsde}, we have the wellposedness result for $\cMFG$ with common noise.

\begin{corollary}[Wellposedness of $\cMFG$] Under assumption \ref{mfg1_first_ass}-\ref{mfg2_last_ass}, there exists a unique $\cMFG$ solution for any initial $\xi_0 \in \Ltwo_{\F_0}$.\end{corollary}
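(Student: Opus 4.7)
The plan is essentially to combine the two preceding results, Theorem \ref{smp_mfg} (the stochastic maximum principle for \cMFG) and Theorem \ref{wellposed_mkfbsde} (well-posedness of the mean-field FBSDE \rref{fbsde_mfg}), which together reduce well-posedness of \cMFG to a problem that has already been solved. Since we work under the full set of assumptions \ref{mfg1_first_ass}--\ref{mfg2_last_ass}, both theorems apply.

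First I would invoke Theorem \ref{smp_mfg} to translate the problem: $\hat{\alpha} \in \Htwo{k}$ is a solution to \cMFG if and only if the mean-field FBSDE \rref{fbsde_mfg} admits an adapted solution $(X_t, Y_t, Z_t, \tZ_t)_{0 \leq t \leq T}$ satisfying the $L^2$-integrability bound, in which case $\hat{\alpha}_t = \ba(t, X_t, Y_t, Z_t, \tZ_t, \mb{P}_{X_t | \tF_t})$. Thus the existence and uniqueness of a \cMFG solution is equivalent to the existence and uniqueness of a solution to \rref{fbsde_mfg} with the prescribed initial condition $X_0 = \xi_0$.

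Next I would apply Theorem \ref{wellposed_mkfbsde}, which gives exactly the existence and uniqueness of $(X_t, Y_t, Z_t, \tZ_t)_{0 \leq t \leq T}$ solving \rref{fbsde_mfg} under \ref{mfg1_first_ass}--\ref{mfg2_last_ass}, together with the required moment bound. Feeding this unique solution back through the feedback map $\ba$ (whose measurability and integrability have already been verified in the proof of Theorem \ref{wellposed_mkfbsde} via \rref{bound_alpha0} and \rref{lip_ba}) yields the unique \cMFG solution $\hat{\alpha} \in \Htwo{k}$. Uniqueness on the FBSDE side transfers to uniqueness on the control side because the feedback representation is deterministic and $\ba$ depends only on $(t, X_t, Y_t, Z_t, \tZ_t, \mb{P}_{X_t | \tF_t})$.

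There is no real obstacle at this stage: all the heavy lifting, namely verifying the monotone and Lipschitz structure \ref{measurable_mkv}--\ref{lip_mon_mkv} for the reduced coefficients $(\bb, \bsg, \btsg, -\d{x}\bH, \d{x}g)$ and deploying Theorem \ref{wellposed_mckean}, was done in the proof of Theorem \ref{wellposed_mkfbsde}. The only minor point to mention explicitly is that the feedback control $\ba$ is independent of $m$ (already observed via \rref{opt}), so the mapping from the FBSDE solution to the control is well defined without ambiguity in the measure argument.
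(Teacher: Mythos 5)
Your proposal is correct and follows exactly the paper's route: the paper derives this corollary in one line by combining Theorem \ref{smp_mfg} (the SMP equivalence between $\cMFG$ solutions and adapted solutions of FBSDE \rref{fbsde_mfg}) with Theorem \ref{wellposed_mkfbsde} (existence and uniqueness of that FBSDE), which is precisely your argument. Your additional remarks on transferring uniqueness through the deterministic feedback map $\ba$ and on $\ba$ being independent of $m$ are sound elaborations of what the paper leaves implicit.
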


\subsection{Markov property}

%many work take markov as given, looking at feedback control, use verification theorem 
% The result is mainly taken from Decoupling function
% Assume sigma is constant
% Main result (SMP + 
% relate to master equation

In the previous section, we seek an \textit{admissible} control or strategy in the space $\Htwo{k}$ which solves mean-field games with common noise \rref{mfg_formulation}. We show that a solution exists under linear-convex setting and weak monotone cost functions using the stochastic maximum principle. By using this approach, the control is given in an \textit{open-loop} form, that is, as a function of paths $(W_t,\tW_t)_{0 \leq t \leq T}$, which is often not desirable for practitioners as, in most cases, they are not easily observable compared to the state process $(X_t)_{0 \leq t \leq T}$. 

%to go to feedback is not obvious, involve the law, true for no common noise.
In a classical control problem, one can get the \textit{closed-loop} or \textit{feed-back} control, that is, as a function of state variables, by using the dynamic programming principle (DPP) approach instead. This method requires solving the Hamilton-Jacobi-Bellman (HJB) equation to obtain the value function and the corresponding optimal control as a function of time and state variables. We can obtain similar result for MFG in the absence of common noise. In that case, the flow of the controlled process under a MFG solution is \textit{deterministic}. As a result, the solution is simply an optimal control of a classical Markovian control problem and, thus, can be written in a feed-back form.

However, this property is not trivial in the case of common noise where the flow is now \textit{stochastic}. In this last section, we would like to show, as an application of the result from section \ref{subsec_decoupling}, that the control is indeed in \textit{closed-loop} or \textit{feed-back} form if we include the conditional law of the state variables. That is, it can be written as a \textit{deterministic} function of state variables and its conditional law thereby establishing the Markov property of MFG with common noise. Our main result is the following

\begin{thm} Assume that \ref{mfg1_first_ass}-\ref{mfg2_last_ass} holds and $\sigma_2(t)=\tsg_2(t)=0$, then the solution $(\ha_t)_{0 \leq t \leq T}$ to MFG with common noise \rref{mfg_formulation} is of the form
\be\label{ha_u}
\ha_t =  u(t,X_t, \mb{P}_{X_t | \tF_t})
\ee
where $u:[0,T] \times \R^n \times \Ptwo(\R^n)$ is a $K$-Lipschitz deterministic function.
\end{thm}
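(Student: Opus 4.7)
The plan is to combine the stochastic maximum principle with the decoupling function of Theorem~\ref{ue_existence}. By Theorem~\ref{smp_mfg}, the MFG solution satisfies $\ha_t = \ba(t, X_t, Y_t, Z_t, \tZ_t, \mb{P}_{X_t|\tF_t})$, where $(X_t, Y_t, Z_t, \tZ_t)$ solves FBSDE~\rref{fbsde_mfg}. In the proof of Theorem~\ref{wellposed_mkfbsde} it was already shown via \rref{opt} that $\ba$ is in fact independent of $m$, and from \rref{lip_ba} that
\begin{equation*}
|\ba(t,x,y,z,\tz) - \ba(t,x',y',z',\tz')| \leq K\l(|x-x'| + |b_2(t)^T(y-y') + \sigma_2(t)^T(z-z') + \tsg_2(t)^T(\tz-\tz')|\r).
\end{equation*}
Under the extra assumption $\sigma_2 \equiv \tsg_2 \equiv 0$, the right-hand side collapses to $K(|x-x'| + |b_2(t)^T(y-y')|)$, so $\ba$ depends only on $(t,x,y)$ and is Lipschitz in those variables with a constant determined by $K$ and $\|b_2\|_\infty$. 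Denote this reduced map by $\ba(t,x,y)$.

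Next, I would apply Theorem~\ref{ue_existence} to FBSDE~\rref{fbsde_mfg} to obtain a deterministic $U:[0,T]\times\R^n\times\Ptwo(\R^n)\to\R^n$ with $Y_t = U(t, X_t, \mb{P}_{X_t|\tF_t})$ a.s.\ for every $t$. Assumptions \ref{measurable_mkv}--\ref{lip_mon_mkv} were verified in the proof of Theorem~\ref{wellposed_mkfbsde}. Assumption \ref{onlyxy_mkv} is trivial here: under \ref{no_m} the forward coefficients $\bb, \bsg, \btsg$ are independent of $m$, while $-\d{x}\bH$ and $\d{x}g$ depend on $m$ only through the $X$-marginal $\mb{P}_{X_t|\tF_t}$. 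The quadratic bound \ref{linearGrowthOnM_m_given} follows upon squaring the linear growth estimates \ref{growth} applied to $\d{x}f^1(t,0,m)$ and $\d{x}g(0,m)$. Assumption \ref{lip_m_given}\ref{lip_m_given_a} is inherited from \ref{lip_m}, and the pointwise monotonicity \ref{lip_m_given}\ref{mon_m_given} is precisely the identity established in the proof of Theorem~\ref{wellposed_mkfbsde} out of the weak monotonicity \ref{mon}.

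Finally, set $u(t,x,m) := \ba(t,x,U(t,x,m))$. Then
\begin{equation*}
\ha_t \;=\; \ba(t,X_t,Y_t) \;=\; \ba(t,X_t, U(t,X_t,\mb{P}_{X_t|\tF_t})) \;=\; u(t,X_t, \mb{P}_{X_t|\tF_t}),
\end{equation*}
which is the desired feedback representation. Lipschitz continuity of $u$ in $(x,m)$ follows by composition from the Lipschitz property of $\ba$ in $(x,y)$ and of $U$ in $(x,m)$ established in Lemma~\ref{estimate_ue}.

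The main conceptual obstacle is packaged in the decoupling result Theorem~\ref{ue_existence}; once that is available, the remaining argument is essentially algebraic. The hypothesis $\sigma_2 \equiv \tsg_2 \equiv 0$ is essential, because the decoupling function produces $Y_t$ --- but not $(Z_t, \tZ_t)$ --- as a function of $(t, X_t, \mb{P}_{X_t|\tF_t})$, so we need the Hamiltonian minimizer $\ba$ to be free of its $(z,\tz)$-arguments; without this reduction one would additionally need a feedback representation for the martingale integrands, which is considerably more delicate and would require higher regularity of $U$.
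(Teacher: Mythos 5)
Your proposal follows the paper's own route almost step for step: Theorem \ref{smp_mfg} plus Theorem \ref{wellposed_mkfbsde} give $\ha_t = \ba(t,X_t,Y_t,Z_t,\tZ_t,\mb{P}_{X_t|\tF_t})$; the hypothesis $\sigma_2\equiv\tsg_2\equiv 0$ together with \rref{opt} and \rref{lip_ba} makes $\ba$ a Lipschitz function of $(t,x,y)$ alone; verification of \ref{measurable_mkv}--\ref{lip_m_given} lets you invoke Theorem \ref{ue_existence} to write $Y_t = U(t,X_t,\mb{P}_{X_t|\tF_t})$; and the composition $u(t,x,m)=\ba(t,x,U(t,x,m))$ finishes the argument, with the Lipschitz property coming from \rref{lip_ba} and Lemma \ref{estimate_ue}. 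Your closing remark --- that the $(z,\tz)$-independence of $\ba$ is exactly what makes decoupling $Y_t$ alone sufficient --- is also the reason the paper imposes $\sigma_2=\tsg_2=0$.

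There is, however, one incorrectly justified step. You claim that the pointwise monotonicity \ref{lip_m_given}\rref{mon_m_given} ``is precisely the identity established in the proof of Theorem \ref{wellposed_mkfbsde} out of the weak monotonicity \ref{mon}.'' It is not. In \ref{lip_m_given}\rref{mon_m_given} the measure argument is held \emph{fixed} on both sides of the difference: one needs, e.g., $\la \d{x}f^1(t,x,m)-\d{x}f^1(t,x',m), x-x'\ra \geq 0$ for an arbitrary common $m$. The weak monotonicity \ref{mon}, and the inequality \rref{mon_fg_2} derived from it, only control differences in which the measure arguments are the laws (or the marginals of a coupling) of the very points being compared, i.e. terms of the form $\d{x}f^1(t,\xi,\mb{P}_\xi)-\d{x}f^1(t,\xi',\mb{P}_{\xi'})$; specializing \ref{mon} to deterministic $\xi=x$, $\xi'=x'$ produces Dirac masses $\delta_x,\delta_{x'}$ as measure arguments, not a common fixed $m$, and neither statement implies the other. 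The correct source of the fixed-measure monotonicity is convexity: \ref{convex} gives it for $\d{x}g(\cdot,m)$, the convexity of $f^1$ in $x$ from \ref{sep} gives it for $\d{x}f^1(t,\cdot,m)$, and the $f^0$/Hamiltonian contribution is handled exactly as in \rref{bound_f0}. This is what the paper does (``here we do not need the weak monotonicity condition for \rref{mon_fg_2} and use \ref{convex} instead''). The fix is local; with that substitution the rest of your argument stands as written.
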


%(B4) \label{linearGrowthOnM_m_given}
%(B5) \label{lip_m_given},(a) \label{lip_m_given_a}, (b)\label{mon_m_given}

\begin{proof} 
From Theorem \ref{smp_mfg}, \ref{wellposed_mkfbsde}, we have shown that the solution to MFG with common noise \rref{mfg_formulation} is given by 
\be\label{opt_ba}
\ha_t = \ba(t,X_t,Y_t,Z_t,\tZ_t,\mb{P}_{X_t | \tF_t})
\ee
where $(X_t,Y_t,Z_t,\tZ_t)_{0 \leq t \leq T}$ is a solution to mean-field FBSDE \rref{fbsde_mfg} and $\ba$ is a deterministic function. Note that even though $\ba$ is deterministic, it does not imply the Markov property or feedback control form as the processes $(Y_t,Z_t,\tZ_t)_{0 \leq t \leq T}$ are not necessarily a \textit{deterministic} function of $X_t$. 

From the assumption $\sigma_2(t)=\tsg_2(t)=0$ and \rref{opt}, we have that $\ba$ is independent of $z,\tz$. We would like to apply Theorem \ref{ue_existence}, so we need to verify that assumption \ref{measurable_mkv}-\ref{lip_m_given} holds for $(\bb,\bsg, \btsg, -\d{x}\bH, \d{x}g)$. We have already shown that \ref{measurable_mkv}-\ref{lip_mon_mkv} holds in the proof of Theorem \ref{wellposed_mkfbsde}. 
\ref{onlyxy_mkv} immediately holds from \rref{fbsde_mfg}, and \ref{linearGrowthOnM_m_given} also holds directly from \ref{linear} and \ref{growth}. For \ref{lip_m_given},  the proof is nearly identical to that of \ref{lip_mon_mkv} in Theorem \ref{wellposed_mkfbsde}, but here we do not need the weak monotonicity condition for \rref{mon_fg_2} and use \ref{convex} instead.

Thus, by Theorem \ref{ue_existence}, there exists a deterministic $K$-Lipschitz function $U:[0,T] \times \R^n \times \Ptwo(\R^n)$ such that
\be\label{decouple_yt}
 Y_t = U(t,X_t,\mb{P}_{X_t | \tF_t})
 \ee
Let $u(t,x,m) = \ba(t,x,U(t,x,m),m)$, then the result follows from \rref{opt_ba}, \rref{decouple_yt}. That is,
\begin{equation*}
	\ha_t = \ba(t,X_t,Y_t,Z_t,\tZ_t,\mb{P}_{X_t | \tF_t}) = \ba(t,X_t,U(t,X_t,\mb{P}_{X_t | \tF_t}),\mb{P}_{X_t | \tF_t})
\end{equation*}
The Lipschitz property of $u$ follows from \rref{lip_ba} and Lemma \ref{estimate_ue}.
\end{proof}

For a classical stochastic control problem, the feed-back form optimal control is related to the gradient of the value function of the HJB equation. Similarly for the MFG, the function $U$ here is related to the gradient of the solution of the so-called \textit{master equation}, an infinite-dimensional second order PDE involving the space of probability measures. For interested readers, we refer to \cite{ahuja2015mean,carmona2014master, delarue2014classical,bensoussan2014master} for detail on the dynamic programming principle approach for MFG and discussions on the master equation.

\section{Acknowledgement}       

This research received no specific grant from any funding agency in the public, commercial, or not-for-profit sectors.

\appendix
% !TEX root = ../main.tex

\section{Proof of Theorem \ref{wellposed}}
\label{proof_wellposed}

Suppose $(\Delta X_t,\Delta Y_t, \Delta Z_t)_{0 \leq t \leq T}$ denote the difference of two solutions. By taking \ito\;lemma on $\la \Delta X_t, \Delta Y_t \ra $ and using \ref{lip_mon_bfg}\eqref{mon_bfg}, we have 
$$ \int_0^T \l| c^{(1)}_t(\Delta Y_t) + c^{(2)}_t(\Delta Z_t) \r|^2 dt \leq 0 $$
Then by \ref{lip_mon_bfg}\eqref{lip_bfg}, we have the uniqueness as desired. For existence, consider the FBSDE
 \begin{equation}
 \label{fbsde_alpha}
 \begin{aligned}
 	 dX_t &= \l[ \alpha B(t,X_t,Y_t,Z_t) + (1-\alpha) \l(-\bar{c}^{(1)}_t \l(c^{(1)}_t( Y_t) + c^{(2)}_t( Z_t) \r) \r) + \phi_t \r] dt  \\
	 &\quad  +\l[ \alpha \Sigma(t,X_t,Y_t,Z_t) + (1-\alpha) \l(-\bar{c}^{(2)}_t \l(c^{(1)}_t( Y_t) + c^{(2)}_t( Z_t) \r) \r) + \psi_t \r]  dW_t  \\
	 dY_t &=  \l[ \alpha F(t,X_t,Y_t,Z_t) + (1-\alpha)(-X_t) + \gamma_t \r] dt+Z_tdW_t \\
X_0 &= \xi, \quad Y_T = \alpha G(X_T) + \eta
\end{aligned}
\end{equation}
where $\bar{c}^{(1)}_t,\bar{c}^{(2)}_t$ are the adjoint operators of the bounded operator $c^{(1)}_t,c^{(2)}_t$, $(\phi_t,\psi_t,\gamma_t)_{0 \leq t \leq T}$ $\in \H_\mb{F}^2([0,T];\R^n)$, and $\eta \in \Ltwo_{\F_T}(\R^n)$. We will show that FBSDE \eqref{fbsde_alpha} with $\alpha=1$ has a unique solution for any $(\phi,\psi,\gamma,\eta)$ by showing that
\begin{enumerate}[(i)]
\item\label{first} FBSDE \eqref{fbsde_alpha} with $\alpha=0$ has a unique solution for any $(\phi,\psi,\gamma,\eta)$. 
\item\label{second}There exist $\delta_0 > 0$ such that for any $\alpha_0 \in [0,1)$, if FBSDE  \eqref{fbsde_alpha} with $\alpha = \alpha_0$ has a unique solution for any $(\phi,\psi,\gamma,\eta)$, then so does FBSDE  \eqref{fbsde_alpha} with $\alpha \in [\alpha_0,\alpha_0+\delta_0)$.
\end{enumerate}  

For \eqref{second}, we define a map $\Phi: \H^2([0,T];\R^n) \ni (x_t,y_t,z_t)_{0 \leq t \leq T}$ $\to$ $(X_t,Y_t,Z_t)_{0 \leq t \leq T} \in \H^2([0,T];\R^n)$ where $(X_t,Y_t,Z_t)_{0 \leq t \leq T}$ is a solution to 
 \begin{equation}
 \label{fbsde_alpha_0}
 \begin{aligned}
 	 dX_t &= \l[ \alpha_0 B(t,X_t,Y_t,Z_t) + (1-\alpha_0) \l(-\bar{c}^{(1)}_t \l(c^{(1)}_t( Y_t) + c^{(2)}_t( Z_t) \r) \r) + \phi_t \r] dt  \\
	 &\quad +  \delta \l[ B(t,x_t,y_t,z_t) +\bar{c}^{(1)}_t \l(c^{(1)}_t( y_t) + c^{(2)}_t( z_t) \r)  \r] dt \\
	 &\quad  +\l[ \alpha_0 \Sigma(t,X_t,Y_t,Z_t) + (1-\alpha_0) \l(-\bar{c}^{(2)}_t \l(c^{(1)}_t( Y_t) + c^{(2)}_t( Z_t) \r) \r) + \psi_t \r]  dW_t  \\
	 &\quad +  \delta \l[ \Sigma(t,x_t,y_t,z_t) +\bar{c}^{(2)}_t \l(c^{(1)}_t( y_t) + c^{(2)}_t( z_t) \r)  \r] dW_t \\
	 dY_t &=  \l[ \alpha_0 F(t,X_t,Y_t,Z_t) + (1-\alpha_0)(-X_t) + \gamma_t + \delta \l(F(t,x_t,y_t,z_t) + x_t \r) \r] dt+Z_tdW_t \\
X_0 &= \xi, \quad Y_T = \alpha_0 G(X_T) +\delta G(x_T)+ \eta
\end{aligned}
\end{equation}
The map is well-defined by assumption in \eqref{second} for $\alpha=\alpha_0$. Then it can be shown that for sufficiently small $\delta_0 > 0$ depending only on the Lipschitz constant $K$ and time duration $T$, $\Phi$ is a contraction for all $\delta \leq \delta_0$; the proof is identical to that of Theorem 3.1 in \cite{peng1999}, so we omit it here.

For \eqref{first}, we need to apply method of continuation again by considering the FBSDE 
 \begin{equation}
 \label{fbsde_alpha_1}
 \begin{aligned}
 	 dX_t &= \l[ \alpha \l(-\bar{c}^{(1)}_t \l(c^{(1)}_t( Y_t) + c^{(2)}_t( Z_t) \r) \r)  + \phi_t \r] dt  \\
	 &\quad  +\l[\alpha \l(-\bar{c}^{(2)}_t \l(c^{(1)}_t( Y_t) + c^{(2)}_t( Z_t) \r) \r) + \psi_t \r]  dW_t  \\
	 dY_t &=  \l[ -X_t + \gamma_t \r] dt+Z_tdW_t \\
X_0 &= \xi, \quad Y_T = \eta.
\end{aligned}
\end{equation}
We aim to show that
\begin{enumerate}[(i)]
\setcounter{enumi}{2}
\item\label{third} FBSDE \eqref{fbsde_alpha_1} with $\alpha=0$ has a unique solution for any $(\phi,\psi,\gamma,\eta)$. 
\item\label{fourth}There exist an $\delta_1 > 0$ such that for any $\alpha_1 \in [0,1)$, if FBSDE  \eqref{fbsde_alpha_1} with $\alpha = \alpha_1$ has a unique solution for any $(\phi,\psi,\gamma,\eta)$, then so does FBSDE  \eqref{fbsde_alpha} with $\alpha \in [\alpha_1,\alpha_1+\delta_1)$.
\end{enumerate}  

\eqref{third} follows from Lemma 2.5 in \cite{peng1999} (with $G=I,\beta_1=1,\beta_2=0$). For \eqref{fourth}, we proceed similarly by defining a map $\Phi: \H^2([0,T];\R^n) \ni (x_t,y_t,z_t)_{0 \leq t \leq T} \to (X_t,Y_t,Z_t)_{0 \leq t \leq T} \in \H^2([0,T];\R^n)$ where $(X_t,Y_t,Z_t)_{0 \leq t \leq T}$ is a solution to
 \begin{equation}
 \label{fbsde_alpha_2}
 \begin{aligned}
 	 dX_t &= \l[ \alpha_1 \l(-\bar{c}^{(1)}_t \l(c^{(1)}_t( Y_t) + c^{(2)}_t( Z_t) \r) \r)  + \phi_t + \delta\l(-\bar{c}^{(1)}_t \l(c^{(1)}_t( y_t) + c^{(2)}_t( z_t) \r) \r)  \r] dt  \\
	 & +\l[\alpha_1 \l(-\bar{c}^{(2)}_t \l(c^{(1)}_t( Y_t) + c^{(2)}_t( Z_t) \r) \r) + \psi_t +\delta \l(-\bar{c}^{(2)}_t \l(c^{(1)}_t( y_t) + c^{(2)}_t( z_t) \r) \r) \r]  dW_t  \\
	 dY_t &=  \l[ -X_t + \gamma_t \r] dt+Z_tdW_t \\
X_0 &= \xi, \quad Y_T = \eta
\end{aligned}
\end{equation}
The map is well-defined by assumption in \eqref{fourth} for $\alpha=\alpha_1$. Similarly, it can be shown that for sufficiently small $\delta_1 > 0$ depending only on the Lipschitz constant $K$ and time duration $T$, $\Phi$ is a contraction for all $\delta \leq \delta_1$; the proof is identical to that of Lemma 2.4 in \cite{peng1999}, so we omit it here.

\section{Proof of Theorem \ref{estimate_fbsde}}
\label{proof_estimate_diff}

% !TEX root = ../main.tex
We will use the following notations in this proof; for $\Phi = B, \Sigma, F$
\begin{equation*}
\begin{aligned}
&\D \Phi_t = \Phi(t,\theta)-\Phi(t,\theta'), &\b{\Phi} = (\Phi-\Phi')(t,\theta'_t)\\
&\D G = G(X_T)-G(X_T'), &\b{G} = (G-G')(X_T')
\end{aligned}
\end{equation*}
By Ito's lemma on $\la\Delta X_{t},\Delta Y_{t} \ra$,
\begin{equation*}
\begin{split}
&\mb{E}[\Ind_{A} \la \Delta X_{s},\Delta Y_{s} \ra ]  \\
&= \mb{E}[\Ind_{A}\la \Delta X_{T}, \Delta G + \bar{G} \ra]  \\
&\quad- \mb{E}[\Ind_{A} \int^{T}_{s}\left( \la \Delta F_{t} + \bar{F}_{t},\Delta X_{t} \ra + \la \Delta B_{t} +\bar{B}_{t} , \Delta Y_{t} \ra +   \la \Delta \Sigma_{t} + \bar{\Sigma}_{t}, \Delta Z_{t} \ra \right)dt]\\
%&= \mb{E}[\Ind_{A}\Delta X_{T}(\Delta G + \bar{G})] - \mb{E}[\Ind_{A} \int^{T}_{s}(\Delta F_{t} \Delta X_{t} + \Delta Y_{t} \Delta B_{t} + \Delta Z_{t} \Delta \Sigma_{t})dt]\\
%&- \mb{E}[\Ind_{A} \int^{T}_{s}(\bar{F}_{t}\Delta X_{t} + \Delta Y_{t} \bar{B}_{t} + \Delta Z_{t} \bar{\Sigma}_{t})dt]\\
& \geq \mb{E}\Ind_{A}  \int^{T}_{s}\beta|c^{(1)}_{t}(\D Y_t) + c^{(2)}_{t}(\D Z_t )|^{2}dt + \mb{E}\Ind_{A}\la \Delta X_{T},\bar{G} \ra  \\
& \quad - \mb{E}[\Ind_{A} \int^{T}_{s}( \la \bar{F}_{t},\Delta X_{t} \ra +\la  \Delta Y_{t} ,\bar{B}_{t} \ra + \la \Delta Z_{t}, \bar{\Sigma}_{t} \ra )dt] 
\end{split}
\end{equation*}
Here we used assumption \ref{lip_mon_bfg}\rref{mon_bfg} and the fact that  A $\in$ ${\mathcal{G}}_{s}$ which helps eliminate the stochastic integral after taking the expectation. Thus, we have 
\begin{equation}\label{bdY_cdZ_estimate}
\begin{aligned}
&\mb{E}\Ind_{A}  \int^{T}_{s}|c^{(1)}_{t}(\D Y_t) + c^{(2)}_{t}(\D Z_t )|^{2}dt \\
&\leq \mb{E}[\Ind_{A} \la \Delta \xi, \Delta Y_{s} \ra ] +\mb{E}\Ind_{A}\la \Delta X_{T},\bar{G} \ra     \\
&\quad + \mb{E}[\Ind_{A} \int^{T}_{s}( \la \bar{F}_{t},\Delta X_{t} \ra +\la  \Delta Y_{t} ,\bar{B}_{t} \ra + \la \Delta Z_{t}, \bar{\Sigma}_{t} \ra )dt] 
\end{aligned}
\end{equation}
Next, by applying Ito lemma on $|\Delta Y_{t}|^{2}$ and using standard argument involving $K$-Lipschitz property of $F,G$, Young's inequality, Burkholder-Davis-Gundy (BDG) inequality (see Theorem 3.28 in \cite{karatzas2012brownian}), and Gronwall inequality, we have 

\begin{equation}\label{Y Z estimate 1}
\begin{split}
&\mb{E}\Ind_{A}(\displaystyle\sup_{t\leq u\leq T}|\Delta Y_{u}|^{2} + \int^{T}_{t} |\Delta Z_{u}|^{2}du)\\
&\leq C_{K,T}\mb{E}\Ind_{A}\l(|\Delta X_{T}|^{2} + |\bar{G}|^{2} + \int^{T}_{t}(|\bar{F}_{u}|^{2} +  |\Delta X_{u}|^{2})du  \r)
\end{split}
\end{equation}
for some constant $C_{K,T} > 0$ depending only on $K,T$. Thus, we also have

\begin{equation}\label{Y first estimate}
\begin{split}
&\mb{E}\Ind_{A}(|\Delta Y_{t}|^{2} + \int^{T}_{t} |\Delta Z_{u}|^{2}du)\\
&\leq C_{K, T}\mb{E}\Ind_{A}\l( |\Delta X_{T}|^{2} + |\bar{G}|^{2} + \int^{T}_{t}(|\bar{F}_{u}|^{2} +  |\Delta X_{u}|^{2})du \r)
\end{split}
\end{equation}

\noindent We now need an estimate on $|\Delta X_{t}|^{2}$. By Ito's lemma, Young's inequality, assumption \ref{lip_mon_bfg}\rref{lip_bfg}, and Gronwall inequality, we have
\begin{equation}\label{X first estimate}
\mb{E}\Ind_{A}|\Delta X_{t}|^{2} \leq C_{K, T}\mb{E}\Ind_{A}\{|\Delta \xi|^{2} + \int^{t}_{s}[|\bar{B}_{u}|^{2} + |\bar{\Sigma}_{u}|^{2} + |c^{(1)}_{u}(\Delta Y_{u})+c^{(2)}_{u}(\Delta Z_{u})|^{2} ]du \}
\end{equation}
Plugging this into \eqref{Y Z estimate 1}, we have

\begin{equation}\label{Y Z estimate 2}
\begin{split}
&\mb{E}\Ind_{A}(\displaystyle\sup_{s\leq t\leq T}|\Delta Y_{t}|^{2} + \int^{T}_{s} |\Delta Z_{t}|^{2}dt)\\
&\leq K\mb{E}\Ind_{A}(|\bar{G}|^{2}  + |\Delta \xi|^{2} ) \\
&\quad + K\mb{E}\Ind_{A}\int^{T}_{s}\left[|\bar{F}_{t}|^{2} + |\bar{B}_{t}|^{2} + |\bar{\Sigma}_{t}|^{2} + |c^{(1)}_{t}(\Delta Y_{t})+c^{(2)}_{t}(\Delta Z_{t})|^{2}\right]dt  
\end{split}
\end{equation}
By BDG inequality, it follows that 
\begin{equation}\label{X BDG}
\begin{split}
&\mb{E}\Ind_{A}\displaystyle\sup_{s \leq t \leq T}\int^{t}_{s} 2\la (\Delta \Sigma_{u} + \bar{\Sigma}_{u})^T \Delta X_{u} ,dW_{u} \ra \\
&\leq C \mb{E}\Ind_{A}[\displaystyle\sup_{s\leq t \leq T}\int^{t}_{s}|(\Delta \Sigma_{u} + \bar{\Sigma}_{u})^T\Delta X_{u} |^{2}du]\\
&\leq \eps \mb{E}\Ind_{A}\displaystyle\sup_{s\leq t\leq T}|\Delta X_{t}|^{2} + C \mb{E}\Ind_{A}(\int^{T}_{s} |\Delta \Sigma_{t} + \bar{\Sigma}_{t} |^{2} dt)\\
&\leq \eps \mb{E}\Ind_{A}\displaystyle\sup_{s\leq t\leq T}|\Delta X_{t}|^{2} + C \mb{E}\Ind_{A}(\int^{T}_{s} [|\bar{\Sigma}_{t} |^{2}  + |\Delta X_{t}|^{2} +|c^{(1)}_{t}(\Delta Y_{t})+c^{(2)}_{t}(\Delta Z_{t})|^{2} ] d t)
\end{split}
\end{equation}
Next, by applying Ito's lemma on $|\Delta X_{t}|^{2}$, using \eqref{X first estimate} and \eqref{X BDG}, we have
\begin{equation}\label{X estimate}
\begin{split}
&\mb{E}\Ind_{A}\displaystyle\sup_{s\leq t \leq T}|\D X_{t}|^{2}\\
%&\leq \mb{E}\Ind_{A}\left\{|\Delta \xi|^{2} + \int^{T}_{s}2|\Delta X_{t} (\Delta B_{t} + \bar{B}_{t})| + |\Delta \Sigma_{t} + \bar{\Sigma}_{t}|^{2}dt  \right\} \\
%&\quad +  \mb{E}\Ind_{A} \displaystyle\sup_{s\leq t \leq T} \int^{t}_{s}2(\Delta \Sigma_{t} + \bar{\Sigma}_{t})^T\Delta X_{t} dW_{t}\\
&\leq C_{K, T}\mb{E}\Ind_{A}\left\{|\Delta \xi|^{2} + \int^{T}_{s}[|\bar{B}_{t}|^{2} + |\bar{\Sigma}_{t}|^{2} + |c^{(1)}_{t}(\Delta Y_{t})+c^{(2)}_{t}(\Delta Z_{t})|^{2} ]dt \right\}
\end{split}
\end{equation}
The result then follows by grouping estimates \eqref{Y Z estimate 2} and \eqref{X estimate} and using \rref{bdY_cdZ_estimate} and Young's inequality.

\bibliographystyle{elsarticle-num}
\bibliography{reference.bib}

\end{document}